\numberwithin{equation}{section}
\newtheorem{thm}{Theorem}[section]
\newtheorem{lem}[thm]{Lemma}
\newtheorem{cor}[thm]{Corollary}
\newtheorem{prop}[thm]{Proposition}
\theoremstyle{definition}
\newtheorem{defn}[thm]{Definition}
\newtheorem{assump}[thm]{Assumption}
\newtheorem{rem}[thm]{Remark}
\newtheorem*{remark}{Remark}
\newtheorem{nclaim}{Claim}
\newcommand{\ve}{\varepsilon}
\newcommand{\supp}{\mathrm{\rm supp\,}}
\newcommand{\dist}{\mathrm{dist}}
\newcommand{\ball}{\mathrm{\rm ball\,}}
\newcommand{\ran}{\mathrm{\rm ran\,}}
\newcommand{\rank}{\mathrm{\rm rank\,}}
\newcommand{\age}{\mathrm{\rm age\,}}
\newcommand{\weight}{\mathrm{\rm weight\,}}
\newcommand{\im}{\mathrm{\rm im\,}}
\newcommand{\R}{\mathbb R}
\newcommand{\N}{\mathbb N}
\newcommand{\Q}{\mathbb Q}
\newcommand{\half}{{\textstyle\frac12}}
\newcommand{\XK}{\mathfrak X_{\mathrm K}}
\newcommand{\X}{\mathfrak{X}}
\newcommand{\Gammamax}{\Gamma^{\text{max}}}
\DeclareMathOperator{\Ker}{Ker}
\begin{document}

\title [HI $\mathscr{L}_{\infty}$ spaces with few but not very few operators] {Hereditarily indecomposable, separable $\mathscr L_\infty$-spaces with $\ell_1$ dual having few operators but not very few operators}

\maketitle

\newpage
\section{Introduction}
We show that given any $k \in \N, k \geq 2$ there is a separable $\mathscr{L}_{\infty}$ space $\X_k$ which has the following properties:-
\begin{enumerate}
\item $\X_k$ is hereditarily indecomposable (HI) and $\X_k^* = \ell_1$.
\item There is a non-compact bounded linear operator $S \colon \X_k \to \X_k$ on $\X_k$, with $S^j \neq 0$ for $1 \leq j < k$ and, $S^k = 0$. 
\item Moreover, $S^j$ ($ 0 \leq j \leq k-1$) is not a compact perturbation of any linear combination of the operators $S^l, l \neq j$.
\item The operator $S \colon \X_k \to \X_k$ is strictly singular (and consequently $S^j$ is strictly singular for all $j \geq 1$).
\item Whenever $T \colon \X_k \to \X_k$ is a bounded linear operator on $\X_k$, there are $\lambda_i \in \R, (0 \leq i \leq k-1)$ and a compact operator $K \colon \X_k \to \X_k$ with $T = \sum_{i=0}^{k-1} \lambda_i S^i  + K$
\end{enumerate}

We note that as a consequence of (1), (2) and (4), the Calkin algebra $\mathcal{L}(\X_k)/ \mathcal{K}(\X_k) $ is $k$ dimensional with basis $\{ I, S, \dots S^{k-1} \}$. More precisely, it is isomorphic as an algebra to the subalgebra $\mathcal{A}$ of $k\times k$ upper-triangular-Toeplitz matrices, i.e. $\mathcal{A}$ is the subalgebra of $\text{Mat}(k\times k)$  generated by \[
\left\{  \begin{pmatrix} 0 & 1 \\ & 0 & 1 \\  & & \ddots & \ddots \\ & & & \ddots & 1 \\ & & & & 0 \end{pmatrix}^j : 0 \leq j \leq k-1 
\right\}
\]
An explicit isomorphism is given by $\psi \colon \mathcal{L}(\X_k) / \mathcal{K}(\X_k) \to \mathcal{A} \, \cong \R[X]/ \langle x^k \rangle$, \[
\sum_{j=0}^{k-1} \lambda_j S^j \mapsto
 \begin{pmatrix} \lambda_0 & \lambda_1 & \lambda_2 & \cdots & \cdots & \lambda_{k-1} \\

0 & \lambda_0 & \lambda_1 & \lambda_2 & \cdots & \lambda_{k-2} \\

0 & 0 & \lambda_0 & \lambda_1 & \ddots &  \vdots \\

\vdots & \vdots & 0 & \ddots & \ddots& \vdots \\

\vdots & \vdots  & \vdots & & \ddots & \vdots \\

0 & 0 & 0 & \cdots & 0 & \lambda_0

\end{pmatrix}
\]

As a consequence of (3) we see that $\X_k$ has the few operators  property but not the very few operators property. In other words, $\R I + \mathcal{K}(\X_k) \subsetneqq \mathcal{L}(\X_k)  \subseteq \R I + \mathcal{S}\mathcal{S} (\X_k) $ (where $\mathcal{S}\mathcal{S} (\X_k) $ is the space of strictly singular operators on $\X_k$ ). We thus have a negative solution to problem 10.7 of Argyros and Haydon (\cite{AH}). 

We remark also that all operators on the spaces $\X_k$ have non-trivial closed invariant subspaces. Indeed, by a result of Lomonosov (see, eg. \cite{ASm} or \cite{Lom}), if an operator $T$ commutes with a non-zero compact operator, then $T$ has a proper closed invariant subspace. In particular, if there is some polynomial of $T$ which is compact and non-zero, then certainly $T$ has a proper closed invariant subspace.

For an operator $T \colon \X_k \to \X_k$ on $\X_k$, $T = \sum_{j=0}^{k-1} \lambda_j S^j  + K$, we consider the polynomial of $T$, given by $\mathcal{P}(T):= (T - \lambda_0 I)^{k}$. It follows (by the ring isomorphism of the Calkin algebra with the ring $\R[X] / \langle x^k \rangle$) that $\mathcal{P}(T)$ is a compact operator. So if $\mathcal{P}(T) \neq 0$ then we are done by the result of Lomonosov. Otherwise it is cleat that $\lambda$ is an eigenvalue of $T$, so that it has a one dimensional invariant subspace.

\subsection{Acknowledgements} 
The author would like to thank his PhD supervisor, Professor R.G. Haydon for his invaluable suggestions, help and support throughout the writing of this paper.

\section{The Basic Construction}
The fundamental idea is to modify the space $\XK$ constructed in \cite{AH} in order to obtain a space with the desired properties. We will therefore be working with two strictly increasing sequences of natural numbers $(m_j)$ and $(n_j)$ which satisfy the same assumptions as in \cite{AH}. We recall what the precise assumptions on these sequences are.

\begin{assump}\label{mnAssump}
We assume that $(m_j,n_j)_{j\in \N}$ satisfy the following:
\begin{enumerate}
\item $m_1\ge 4$; \item $m_{j+1} \ge m_j^2$;\item $n_1\ge m_1^2$; \item $n_{j+1} \ge
(16n_j)^{\log_2m_{j+1}}=m_{j+1}^2(4n_j)^{\log_2m_{j+1}}.$
\end{enumerate}
\end{assump}

We will construct $\X_k$ using the generalised Bourgain-Delbaen construction described in \cite{AH}. We need the following (slight modification) of theoerm 3.5 appearing in \cite{AH}.

\begin{thm}\label{BDThm}
Let $(\Delta_q)_{q\in \N}$ be a disjoint sequence of non-empty
finite sets; write $\Gamma_q=\bigcup_{1\le p\le
q}\Delta_p$, $\Gamma=\bigcup_{ p\in \N}\Delta_p$. Assume that
there exists $\theta<\frac12$ and a mapping $\tau$ defined on
$\Gamma\setminus \Delta_1$, assigning to each $\gamma\in
\Delta_{q+1}$ a tuple of one of the forms:
\begin{enumerate}
\setcounter{enumi}{-1}
 \item $(\alpha, \xi)$ with $0\le \alpha\le 1$ and $\xi\in \Gamma_q$;
 \item $(p,\beta, b^*)$ with $0\le p< q$, $0<\beta\le \theta$ and $b^*\in \ball
 \ell_1\left(\Gamma_q\setminus \Gamma_p\right)$;
 \item $(\alpha,\xi,p,\beta,b^*)$ with $0<\alpha\le 1$, $1\le p <q$, $\xi\in
 \Gamma_p$, $0<\beta\le \theta$  and $b^*\in \ball
 \ell_1\left  (\Gamma_q\setminus \Gamma_p\right)$.
 \end{enumerate}
Then there exist $d_\gamma^* = e^*_\gamma-c^*_\gamma\in
\ell_1(\Gamma)$ and projections $P^*_{(0,q]}$ on $\ell_1(\Gamma)$
uniquely determined by the following properties:
\begin{enumerate}
  \item[(A)]
  $\displaystyle P^*_{(0,q]}d^*_\gamma = \begin{cases}
 d^*_\gamma\qquad\qquad\qquad\qquad\qquad\text{if }\gamma\in \Gamma_q\\
 0\ \qquad\qquad\qquad\qquad\qquad\text{if }\gamma\in \Gamma\setminus
 \Gamma_q
 \end{cases}$
 \
 \item[(B)] $\displaystyle \qquad\,c^*_\gamma = \begin{cases}
  0  \qquad\qquad\qquad\qquad\qquad\ \text{if }\gamma\in \Delta_1\\
  \alpha e^*_\xi \qquad \qquad\qquad\qquad\quad\text{if } \tau(\gamma) = (\alpha,
  \xi)\\
  \beta (I-P^*_{(0,p]}) b^*\qquad\quad \quad\text{
  if }\tau(\gamma)=(p, \beta, b^*)\\
        \alpha e^*_\xi + \beta(I-P^*_{(0,p]}) b^*\quad\ \text{
 if }\tau(\gamma)=(\alpha, \xi, p,\beta, b^*).
 \end{cases}$
 \end{enumerate}
 The family $(d^*_\gamma)_{\gamma\in \Gamma}$ is a basis for
 $\ell_1(\Gamma)$ with basis constant at most $M=
 (1-2\theta)^{-1}$.  The norm of each projection $P^*_{(0,q]}$ is at
 most $M$.  The biorthogonal vectors $d_\gamma$ generate a
 $\mathscr L_{\infty ,M}$-subspace $X(\Gamma,\tau)$ of $\ell_\infty(\Gamma)$.
 For each $q$ and each $u\in \ell_\infty(\Gamma_q)$, there is a
 unique $i_q(u)\in [d_\gamma:\gamma\in \Gamma_q]$ whose restriction
 to $\Gamma_q$ is $u$; the extension operator
 $i_q:\ell_\infty(\Gamma_q)\to X(\Gamma,\tau)$ has norm at most $M$.
  The subspaces $M_q=[d_\gamma:\gamma\in \Delta_q]
  =i_q[\ell_\infty(\Delta_q)]$ form a
finite-dimensional decomposition (FDD) for $X$; if this FDD is
shrinking then $X^*$ is naturally isomorphic to $\ell_1(\Gamma)$.
\end{thm}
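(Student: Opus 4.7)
The plan is to follow closely the argument of Theorem 3.5 in \cite{AH}, the only novelty being the allowance of the pure evaluation tuple $(\alpha, \xi)$ in (0), which turns out to be the easiest case to handle. The construction proceeds by induction on $q$. At $q=1$, clause (B) forces $c^*_\gamma = 0$ and $d^*_\gamma = e^*_\gamma$ for $\gamma \in \Delta_1$, while (A) prescribes $P^*_{(0,1]}$ to be the coordinate restriction $\ell_1(\Gamma) \to \ell_1(\Gamma_1)$. Inductively, given $(d^*_\gamma)_{\gamma \in \Gamma_q}$ and the projections $P^*_{(0,p]}$ for $p \leq q$ acting on $\ell_1(\Gamma_q)$, formula (B) is unambiguous for each $\gamma \in \Delta_{q+1}$ and produces $c^*_\gamma \in \ell_1(\Gamma_q)$; we then set $d^*_\gamma = e^*_\gamma - c^*_\gamma$. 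The change of basis from $(e^*_\gamma)_{\gamma \in \Gamma_{q+1}}$ to $(d^*_\gamma)_{\gamma \in \Gamma_{q+1}}$ is unit upper triangular with respect to the level grading, so the latter is again a basis of $\ell_1(\Gamma_{q+1})$, and (A) uniquely determines the extension of each $P^*_{(0,p]}$ to $\ell_1(\Gamma_{q+1})$. Both existence and uniqueness at every stage are forced by (A) and (B).

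The heart of the proof is the uniform bound $\|P^*_{(0,q]}\| \leq M := (1-2\theta)^{-1}$, established by induction on $q$. The key identity is that for $\gamma \in \Delta_{q+1}$ one has $P^*_{(0,q]} e^*_\gamma = c^*_\gamma$: indeed $c^*_\gamma \in \ell_1(\Gamma_q)$ is fixed by $P^*_{(0,q]}$, while $d^*_\gamma$ is annihilated by (A). The inductive bound $\|I - P^*_{(0,p]}\| \leq 1 + M$ for $p < q$ then controls $\|c^*_\gamma\|_1 \leq \alpha + \beta(1 + M) \leq 1 + \theta(1 + M)$, and an iterated version of the same computation over levels $>q$ closes the induction with the chosen $M$; the new type (0) contributes nothing delicate, since already $\|\alpha e^*_\xi\|_1 \leq 1$. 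The basis constant of $(d^*_\gamma)_{\gamma \in \Gamma}$ is then bounded by $\sup_q \|P^*_{(0,q]}\| \leq M$.

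The system $(d_\gamma)_{\gamma \in \Gamma} \subset \ell_\infty(\Gamma)$ biorthogonal to $(d^*_\gamma)$ is then well-defined and unique. The extension operator $i_q$ is essentially the predual of $P^*_{(0,q]}$, so inherits the norm bound $\|i_q\| \leq M$, producing the $\mathscr L_{\infty,M}$-subspace structure on $X(\Gamma, \tau)$. The FDD property of $M_q = [d_\gamma : \gamma \in \Delta_q]$ follows from the dual projections $P^*_{(0,q]} - P^*_{(0,q-1]}$, and the final identification $X^* \cong \ell_1(\Gamma)$ under the shrinking hypothesis is standard duality for FDDs. The main obstacle is the simultaneous norm control of all projections $P^*_{(0,p]}$ throughout the induction; this is precisely where the constraint $\theta < \tfrac{1}{2}$ enters, being the critical value that makes the arithmetic close.
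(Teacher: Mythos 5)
Your proposal is correct and follows essentially the same route as the paper, which omits the proof entirely and invokes the argument of Theorem 3.5 of \cite{AH}: inductive construction of the $c^*_\gamma$, $d^*_\gamma$ and the projections, a uniform bound $\|P^*_{(0,q]}\|\le M=(1-2\theta)^{-1}$ proved by induction, and the standard consequences for the biorthogonal vectors, the extension operators $i_q$, the FDD and the dual. One small caution worth recording: for $\gamma$ of rank greater than $q+1$ the induction closes not by literally iterating your one-step bound $1+\theta(1+M)$ but by using the compatibility $P^*_{(0,q]}P^*_{(0,p]}=P^*_{(0,\min(p,q)]}$, which either kills the term $\beta P^*_{(0,q]}(I-P^*_{(0,p]})b^*$ (when $p\ge q$) or telescopes it to $\beta(P^*_{(0,q]}-P^*_{(0,p]})b^*$, giving the tight estimate $1+2\theta M=M$ --- exactly the computation in \cite{AH}.
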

We omit the proof since it is the same as in \cite{AH}. We will also use the notation developed by Argyros and Haydon in \cite{AH}. In particular, as observed in the above theorem, the subspaces $M_n = [d_\gamma:\gamma\in \Delta_n]$
form a finite-dimensional decomposition for $X=X(\Gamma, \tau)$. For each interval
$I\subseteq \N$ we define the projection $P_I:X\to \bigoplus
_{n\in I}M_n$ in the natural way; this is consistent with our use of
$P^*_{(0,n]}$ in Theorem~\ref{BDThm}. As in \cite{AH}, many of the arguments will
involve sequences of vectors that are block sequences with respect
to this FDD. It will therefore be useful to make the following definition; for $x \in X$, we define the {\em range}  of $x$, denoted $\ran x$, to be the smallest interval $I\subseteq \N$ such that $x\in
\bigoplus_{n\in I}M_n$.  

We make one further remark on terminology (taken from \cite{AH}). If $\gamma \in \Delta_{n+1}$, we say $c_{\gamma}^*$ is a \emph{Type 0 BD-functional} if $\tau(\gamma) = (\alpha, \xi)$, a \emph{Type 1 BD-functional} if $\tau(\gamma) = (p, \beta, b^*)$ or a \emph{Type 2 BD-functional} if $\tau(\gamma) = (\alpha, \xi, p, \beta, b^*)$.

Our space will be a specific BD space very similar to the space $\mathfrak X_{\mathrm K}$ constructed in \cite{AH}.  We adopt the same notation used in \cite{AH}, in which elements $\gamma$ of $\Delta_{n+1}$ automatically code the corresponding BD-functionals. Consequently, we can write $X(\Gamma)$ rather than $X(\Gamma, \tau)$ for the resulting $\mathscr{L}_{\infty}$ space. To be more precise, an element $\gamma$ of $\Delta_{n+1}$ will be a tuple
of one of the forms:
\begin{enumerate}
\item $\gamma= (n+1, p, \beta,b^*)$,\quad in which case $\tau(\gamma)=(p,\beta,b^*)$;
\item $\gamma = (n+1,\xi,\beta,b^*)$ in which case $\tau(\gamma)=(1,\xi,\rank \xi, \beta,b^*)$.
\end{enumerate}
In each case, the first co-ordinate of $\gamma$ tells us what the
{\em rank} of $\gamma$ is, that is to say to which set
$\Delta_{n+1}$ it belongs, while the remaining co-ordinates specify
the corresponding BD-functional.

We observe that BD-functionals of Type 0. In the definition of a Type 2 functional,
the scalar $\alpha$ that occurs
 is always 1 and $p$ equals $\rank \xi$. As in the Argyros Haydon construction, we shall make
the further restriction the weight $\beta$ must be of the form
$m_j^{-1}$, where the sequences $(m_j)$ and $(n_j)$ satisfy
Assumption~\ref{mnAssump}.  We shall say that the element $\gamma$
has {\em weight} $m_j^{-1}$. In the case of
a Type 2 element $\gamma=(n+1, \xi, m^{-1}_j,b^*)$ we shall insist
that $\xi$ be of the same weight $m_j^{-1}$ as $\gamma$.

To ensure that the sets $\Delta_{n+1}$ are finite we shall admit
into $\Delta_{n+1}$ only elements of weight $m_j$ with $j\le n+1$. A
further restriction involves the recursively defined function called ``age'' (also defined in \cite{AH}). For a Type 1 element $\gamma=(n+1, p, \beta, b^*)$ we
define $\age\gamma=1$. For a Type 2 element $\gamma=(n+1, \xi,
m^{-1}_j,b^*)$, we define $\age \gamma= 1 + \age \xi$, and further
restrict the elements of $\Delta_{n+1}$ by insisting that the age of
an element of weight $m_j^{-1}$ may not exceed $n_j$. Finally, we
shall restrict the functionals $b^*$ that occur in an element of
$\Delta_{n+1}$ by requiring them to lie in some finite subset $B_n$
of $\ell_1(\Gamma_n)$. It is convenient to fix an increasing
sequence of natural numbers $(N_n)$ and take $B_{p,n}$ to be the set
of all linear combinations $b^*=\sum_{\eta\in \Gamma_n\setminus
\Gamma_p}a_\eta e^*_\eta$, where $\sum_\eta|a_\eta|\le 1$ and each
$a_\eta$ is a rational number with denominator dividing $N_n!$. We
may suppose the $N_n$ are chosen in such a way that $B_{p,n}$ is a
$2^{-n}$-net in the unit ball of $\ell_1(\Gamma_n\setminus
\Gamma_p)$. The above restrictions may be summarized as follows.

\begin{assump}\label{DeltaUpperAssump}
\begin{align*}
\Delta_{n+1} &\subseteq \bigcup_{j=1}^{n+1} \left\{(n+1, p, m_j^{-1},b^*): 0\leq p < n,\ b^*\in B_{p,n}\right\}\\
&\cup
\bigcup_{p=1}^{n-1}\bigcup_{j=1}^{p}\left\{(n+1,\xi,m_j^{-1},b^*):
\xi\in \Delta_p, \weight \xi = m_j^{-1},\ \age\xi<n_j,\ b^*\in
B_{p,n}\right\}
\end{align*}
\end{assump}

As in \cite{AH} we shall also assume that $\Delta_{n+1}$ contains a rich supply of
elements of ``even weight'', more exactly of weight $m_j^{-1}$ with
$j$ even.

\begin{assump}\label{DeltaLowerAssump}
\begin{align*}
\Delta_{n+1} &\supseteq \bigcup_{j=1}^{\lfloor( n+1)/2\rfloor} \left\{(n+1,m_{2j}^{-1},b^*): 0\leq p < n,\ b^*\in B_{p,n}\right\}\\
&\cup\bigcup_{p=1}^{n-1} \bigcup_{j=1}^{\lfloor p/2\rfloor}
\left\{(n+1,\xi,m_{2j}^{-1},b^*): \xi\in \Delta_p, \weight \xi =
m_{2j}^{-1},\ \age\xi<n_{2j},\ b^*\in B_{p,n}\right\}
\end{align*}
\end{assump}

For the main construction, there will be additional restrictions on the
elements with ``odd weight'' $m_{2j-1}^{-1}$, though we will come to these later. To begin with, we shall work with the space $X(\Gamma^{\text{max}} (k))$ where $k \in \N, k \geq 2$ and $\Gamma^{\text{max}}(k)$ is defined by recursion:

\begin{defn}
We define $\Gamma^{\text{max}}(k)$ by the recursion $\Delta_1 = \{ 0, 1, \dots (k-1) \}$,
\begin{align*}
\Delta_{n+1} &=\bigcup_{j=1}^{n+1} \left\{(n+1,p,m_j^{-1},b^*):  0\leq p < n,\ b^*\in B_{p,n}\right\}\\
&\cup \bigcup_{p=1}^{n-1}
\bigcup_{j=1}^p\left\{(n+1,\xi,m_j^{-1},b^*): \xi\in \Delta_p,
\weight \xi = m_j^{-1},\ \age\xi<n_j,\ b^*\in B_{p,n}\right\}
\end{align*}
\end{defn}

\begin{rem} \label{notationBpn}
Later on, we will want to take a suitable subset of $\Gammamax$. To avoid any ambiguity in notation, in the above definition, and throughout the rest of the paper, $B_{p,n}$ will denote the set of all linear combinations $b^* = \sum_{\eta \in \Gammamax_n(k) \setminus \Gammamax_p(k)} a_{\eta}^*$, where, as before, $\sum_{\eta} |a_{\eta}| \leq 1$ and each $a_{\eta}$ is a rational number with denominator dividing $N_n!$. 
\end{rem}

In all that follows, we are assuming we have $k$ points in the set $\Delta_1$ (for some fixed $k \in \N, k \geq 2$). For simplicity of notation, we shall just write $\Gammamax$, $\Gammamax_n$ for $\Gammamax(k)$ (repsectively $\Gammamax_n(k)$). 

Eventually, we want to have a non-compact, bounded linear operator $S$ on our space. To this end, we will need the following theorem. We make use of a single element set which is disjoint from $\Gammamax$, and label the element `undefined'.

\begin{thm} \label{R^*andGConstruction}
There is a map $G: \Gammamax \to \Gammamax \cup \{ \text{undefined} \}$ (we say \emph{$G(\gamma)$ is undefined} if $G(\gamma) = \text{undefined}$, otherwise we say \emph{$G(\gamma)$ is defined}) and a norm 1, linear mapping $R^* \colon \ell_1(\Gammamax) \to \ell_1(\Gammamax)$ satisying:
\begin{enumerate}
\item $G(j) = j-1$ for $1\leq j \leq k-1$ and $G(0)$ is undefined (where we recall $\Delta_1 = \{ 0, 1, \dots k-1 \}$).
\item For elements $\gamma \in \Gammamax\setminus\Delta_1$ such that $G(\gamma)$ is defined, $\rank\gamma = \rank G(\gamma)$ and  $\weight \gamma = \weight G(\gamma)$ (i.e. G preserves weight and rank). Moreover, $\age G(\gamma) \leq \age \gamma$ ($G$ doesn't increase age).
\item \begin{equation*} R^*(e_{\gamma}^*) = \begin{cases} e_{G(\gamma)}^* & \text{ if $G(\gamma)$ is defined} \\ 0 & \text{ otherwise} \end{cases} \end{equation*}
\item \begin{equation*} R^*(d_{\gamma}^*) = \begin{cases} d_{G(\gamma)}^* & \text{ if $G(\gamma)$ is defined} \\ 0 & \text{ otherwise} \end{cases}\end{equation*}
\end{enumerate}
\end{thm}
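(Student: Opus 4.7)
The plan is to build $G$ and $R^*$ simultaneously by induction on rank, defining $G$ on $\Delta_{q+1}$ and $R^*$ on the basis vectors $\{e_\gamma^*:\gamma\in\Delta_{q+1}\}$ at stage $q+1$; property (3) then dictates $R^*$ uniquely and (4) is something we must verify. Property (1) provides the base case: $G(0)$ undefined, $G(j) = j-1$ for $1\le j\le k-1$. Since $d_\gamma^* = e_\gamma^*$ for every $\gamma\in\Delta_1$, all four properties hold trivially at rank $1$.

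For the inductive step at stage $q+1$, a Type 1 element $\gamma = (q+1, p, m_j^{-1}, b^*)$ is handled as follows. By the inductive hypothesis $R^*$ is defined on $\ell_1(\Gammamax_q)$, and because $G$ preserves rank, $R^*(b^*)$ still has support in $\Gammamax_q\setminus\Gammamax_p$; its $\ell_1$-mass is at most $1$ and its coefficients remain rationals with denominator dividing $N_q!$, so $R^*(b^*)\in B_{p,q}$. If $R^*(b^*)\ne 0$, I set $G(\gamma) = (q+1,p,m_j^{-1},R^*(b^*))$ and otherwise leave $G(\gamma)$ undefined. A Type 2 element $\gamma = (q+1,\xi,m_j^{-1},b^*)$ is handled in parallel: when $G(\xi)$ is defined, the inductive hypothesis gives $\weight G(\xi) = m_j^{-1}$ and $\age G(\xi)\le \age \xi< n_j$, so $(q+1,G(\xi),m_j^{-1},R^*(b^*))$ is admissible; otherwise I declare $G(\gamma)$ undefined. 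Property (2) is immediate from these choices, and $\|R^*\|\le 1$ follows by the triangle inequality applied to $R^*(\sum a_\gamma e_\gamma^*) = \sum_{G(\gamma)\text{ defined}} a_\gamma e_{G(\gamma)}^*$, with equality because $R^*e_1^* = e_0^*$.

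The main obstacle is property (4), and my strategy is to carry a side-invariant through the induction: at each stage, $R^*$ commutes with every projection $P^*_{(0,p]}$ on $\ell_1(\Gammamax_q)$. Granted the commutation up to rank $q$, the verification of (4) at rank $q+1$ is a direct unwinding of the BD-functional formula. For a Type 1 $\gamma$ with $G(\gamma)$ defined,
\[
R^*(c_\gamma^*) = m_j^{-1}R^*(I-P^*_{(0,p]})b^* = m_j^{-1}(I-P^*_{(0,p]})R^*(b^*) = c_{G(\gamma)}^*,
\]
so $R^*d_\gamma^* = R^*e_\gamma^* - R^*c_\gamma^* = d_{G(\gamma)}^*$. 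The Type 2 computation differs only by an extra term $R^*e_\xi^* = e_{G(\xi)}^*$, and when $G(\gamma)$ is undefined both $R^*e_\gamma^*$ and $R^*c_\gamma^*$ vanish (the latter because $R^*(b^*) = 0$), giving $R^*d_\gamma^* = 0$. Once (4) holds on $\Gammamax_{q+1}$, the updated commutation on $\ell_1(\Gammamax_{q+1})$ is immediate from comparing the two sides on the basis $(d_\gamma^*)$ using rank-preservation, closing the induction. The delicate point throughout is the interlocking of (2), (4), and the commutation invariant --- no single step is hard, but all three must be advanced together.
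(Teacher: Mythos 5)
Your overall strategy --- a simultaneous induction on rank, with (3) used to define $R^*$ on the $e^*_\gamma$'s, (4) verified via the BD formula, and the commutation of $R^*$ with the projections $P^*_{(0,p]}$ carried as a side invariant --- is the same as the paper's, and your treatment of Type 1 elements is sound (your criterion ``$R^*b^*\ne 0$'' is equivalent to the paper's ``$P^*_{(p,\infty)}R^*b^*\ne 0$'', since $R^*b^*\in\ell_1(\Gammamax_q\setminus\Gammamax_p)$ by rank preservation while $\ker P^*_{(p,\infty)}=[d^*_\gamma:\gamma\in\Gammamax_p]=\ell_1(\Gammamax_p)$). However, there is a genuine gap in the Type 2 case. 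You declare $G(\gamma)$ undefined whenever $G(\xi)$ is undefined, and in your verification of (4) you assert that then ``both $R^*e^*_\gamma$ and $R^*c^*_\gamma$ vanish (the latter because $R^*(b^*)=0$)''. For a Type 2 element that is not what your definition gives you: if $\gamma=(q+1,\xi,m_j^{-1},b^*)$ with $G(\xi)$ undefined, one can perfectly well have $R^*b^*\ne 0$, and then $R^*c^*_\gamma=R^*e^*_\xi+m_j^{-1}P^*_{(\rank\xi,\infty)}R^*b^*=m_j^{-1}P^*_{(\rank\xi,\infty)}R^*b^*\ne 0$, while (3) forces $R^*e^*_\gamma=0$; linearity then yields $R^*d^*_\gamma=-m_j^{-1}P^*_{(\rank\xi,\infty)}R^*b^*\ne 0$, contradicting (4). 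So with your choices, properties (3) and (4) are incompatible in this subcase, and no bookkeeping with the commutation invariant can repair it.

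The paper resolves precisely this case by a different assignment: when $G(\xi)$ is undefined but $P^*_{(\rank\xi,\infty)}R^*b^*\ne 0$, it sets $G(\gamma):=(q+1,\rank\xi,m_j^{-1},R^*b^*)$, i.e.\ it demotes the image to an age-one (Type 1) element, whose BD functional is exactly $m_j^{-1}P^*_{(\rank\xi,\infty)}R^*b^*=R^*c^*_\gamma$; then setting $R^*d^*_\gamma:=d^*_{G(\gamma)}$ gives $R^*e^*_\gamma=e^*_{G(\gamma)}$ consistently, and rank and weight are still preserved. Note that this demotion is the reason the statement only claims $\age G(\gamma)\le\age\gamma$ rather than equality; under your construction age would always be preserved exactly, which is itself a sign that this subcase has gone missing. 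The remainder of your argument (base case, admissibility of the image tuples in $B_{p,q}$, the norm bound, and checking the commutation on the $d^*$-basis) matches the paper's proof.
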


\begin{proof}
We will construct the maps $G$ and $R^*$ inductively. We note that since $R^*$ will be a linear operator on $\ell_{1}(\Gammamax)$, in order to ensure it is also bounded we only need to be able to control $\|R^*(e_{\gamma}^*)\|$ (for $\gamma\in\Gammamax$). More precisely, if there is some $M \geq 0$ s.t. $\|R^*(e_{\gamma}^*)\| \leq M$ for every $\gamma \in \Gammamax$, then it is elementary to check that $R^*$ is bounded with norm at most $M$. In particular, if property (3) of theorem \ref{R^*andGConstruction} holds, it follows that $\|R^*\| = 1$ 

To begin the inductive constructions of $R^*$ and $G$ we define $G \colon \Delta_{1} \to \Delta_{1}$ by setting $G(j) = j-1$ for $1 \leq j \leq k-1$ and declaring that $G(0)$ is undefined. Noting that $e_{\gamma}^* = d_{\gamma}^*$ for $\gamma \in \Delta_1$, we define $R^*(e_{0}^*) = R^*(d_{0}^*) = 0$ and $R^*(e_{j}^*) = R^*(d_{j}^*) = e_{j-1}^* = d_{j-1}^*$ for $j \geq 1$. We observe that this definition is consistent with the properties (1) - (4) above.

Suppose that we have defined $G: \Gammamax_n \to \Gammamax_n$ and $R^* \colon \ell_1(\Gammamax_n) \to \ell_1(\Gammamax_n)$ satisfying properties (1) - (4) above. We must extend $G$ to $\Gammamax_{n+1}$ and $R^*$ to a map on $\ell_1(\Gammamax_{n+1})$. We consider a $\gamma \in \Delta_{n+1}$ and recall that (see theorem \ref{BDThm}) $e_{\gamma}^* = c_{\gamma}^* + d_{\gamma}^*$. We wish to define $R^* d_{\gamma}^*$ and $R^*e_{\gamma}^*$. By linear independence, we are free to define $R^*d_{\gamma}^*$ however we like. However, since $R^*c_{\gamma}^*$ is already defined ($c_{\gamma}^* \in \ell_{1}(\Gammamax_n))$, and we want $R^*$ to be linear, once we have defined $R^*d_{\gamma}^*$, in order to have linearity we must have $R^*e_{\gamma}^* = R^*c_{\gamma}^* + R^*d_{\gamma}^*$.

Let us consider $R^*c_{\gamma}^*$. We suppose first that $\age\gamma = 1$ so that we can write $\gamma = (n+1, p, \beta, b^*)$ where $b^* \in \ell_{1}(\Gammamax_n \setminus \Gammamax_p)$. Consequently \[
c_{\gamma}^* = \beta\left( I - P^*_{(0,p]}b^*\right) = \beta P^*_{(p,\infty)}b^*
\]
We claim that $R^*P^*_{(p, \infty)}b^* = P^*_{(p,\infty)}R^*b^*$. Indeed, we can write $b^* = \sum_{\delta \, \in \, \Gammamax_n} \alpha_{\delta}d_{\delta}^*$ (for a unique choice of $\alpha_{\delta}$). It follows from property (4) and the inductive hypothesis that \[
R^*P^*_{(p, \infty)}b^* = R^*\big(\sum_{\delta \in \Gammamax_n\setminus\Gammamax_p} \alpha_{\delta}d_{\delta}^*\big	) = \sum_{\substack{ \delta \in \Gammamax_n \setminus \Gammamax_p \cap \\ \{\eta \, \in \, \Gammamax_n \, : \, G(\eta) \text{ is defined} \}} } \alpha_{\delta}d^*_{G(\delta)}
\]
and it is easily checked (by a similar calculation) that we obtain the same expression for $P^*_{(p,\infty)}R^*b^*$. It follows that \[
R^*c_{\gamma}^* = \beta P^*_{(p,\infty)} R^*b^*
\]
We define $G(\gamma)$ by \[
G(\gamma) = \begin{cases} \text{undefined} & \text{ if $P^*_{(p,\infty)} R^*b^* = 0$} \\ (n+1, p, \beta, R^*b^*) & \text{ otherwise} \end{cases}
\]
where it is a simple consequence of the facts that $R^*\colon \ell_1(\Gammamax_n) \to \ell_1(\Gammamax_n)$ has norm 1 and satisfies property (3) that the element $(n+1,p,\beta, R^*b^*) \in \Gammamax$. In the case where $P^*_{(p,\infty)} R^*b^* \neq 0$, $G(\gamma)$ is defined (with the definition as above) and it is evident that $R^*c_{\gamma}^* = c_{G(\gamma}^*)$. We can define $R^*d_{\gamma}^* = d_{G(\gamma)}^*$ and it follows by linearity (and the fact that $e_{\gamma}^* = c_{\gamma}^* + d_{\gamma}^*$) that we have $R^*e_{\gamma}^* = e_{G(\gamma)}^*$ as required. Otherwise, when $P^*_{(p,\infty)} R^*b^* = 0$, $R^*c_{\gamma}^* = 0$, so we can set $R^*d_{\gamma}^* = 0$ and again by linearity we get that $R^*e_{\gamma}^* = 0$.

Now if $\gamma$ has age $>1$, we can write $\gamma = (n+1, \xi, \beta, b^*)$ and $c_{\gamma}^* = e_{\xi}^* + \beta P^*_{(\rank\xi,\infty)}b^*$. Consequently 
\begin{align*}
R^*c_{\gamma}^* &= R^*(e_{\xi}^*) + \beta P^*_{(\rank\xi,\infty)}R^*b^* \\
&= \begin{cases} e_{G(\xi)}^* + \beta P^*_{(\rank\xi,\infty)}R^*b^* & \text{ if $G(\xi)$ is defined} \\ \beta P^*_{(\rank\xi,\infty)}R^*b^* & \text{ otherwise} \end{cases}
\end{align*}
It follows that if $G(\xi)$ is undefined and $P^*_{(\rank\xi,\infty)}R^*b^* = 0$ then $R^*c_{\gamma}^* = 0$. In this case we declare $G(\gamma)$ to be undefined. Otherwise, there are two remaining possiblities
\begin{enumerate}[(i)]
\item $G(\xi)$ is undefined but $P^*_{(\rank\xi,\infty)}R^*b^* \neq 0$. In this case, it is easily verified that the element $G(\gamma) := (n+1, \rank\xi, \beta, R^*b^*) \in \Gammamax$ 
\item $G(\xi)$ is defined. It is again easily checked that the element $G(\gamma):= (n+1, G(\xi), \beta, R^*b^*) \in \Gammamax$ (here we note that in addition to the above arguments, we also need the inductive hypothesis that $G$ does not increase the age of an element).
\end{enumerate}
In either of these cases, we see that $R^*c_{\gamma}^* = c_{G(\gamma)}^*$. We can define $R^*d_{\gamma}^*$ to be $d_{G(\gamma)}^*$ and as before, we then necessarily have $R^*e_{\gamma}^* = e_{G(\gamma)}^*$ (in order that $R^*$ be linear).

We have thus succeeded in extending the maps $G$ and $R^*$. By induction, we therefore obtain maps $G: \Gammamax \to \Gammamax \cup \{ \text{undefined} \}$ and $R^* \colon (c_{00}(\Gammamax), \| \cdot \|_{1}) \to (c_{00}(\Gammamax), \| \cdot \|_{1})$ satisfying the four properties above (here $(c_{00}(\Gammamax), \| \cdot \|_{1})$ is the dense subspace of $\ell_1(\Gammamax)$ consisting of all finitely supported vectors). It follows from property (4) and the argument above that $R^*$ is continuous, linear with $\|R^*\|=1$. It therefore extends (uniquely) to a bounded linear map $R^*\colon\ell_1(\Gammamax) \to \ell_1(\Gammamax)$ also having norm $1$. This completes the proof.
\end{proof}
We make some important observations about the mappings $G$ and $R^*$. 
\begin{lem}\label{DualOfR^*RestrictsProperly}
The dual operator of $R^*$, which we denote by $(R^*)' \colon \ell_1(\Gammamax)^* \to \ell_{1}(\Gammamax)^*$ restricts to give a bounded linear operator $R:= (R^*)'|_{X(\Gammamax)} \colon X(\Gammamax) \to X(\Gammamax)$ of norm at most $1$.
\end{lem}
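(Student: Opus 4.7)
The plan is to exploit the duality between the $\ell_1$-basis $(d^*_\gamma)$ and the biorthogonal family $(d_\gamma)$ and show directly that $(R^*)'$ sends each $d_\gamma$ to a finite linear combination of $d_\eta$'s. Once this is established, invariance of $X(\Gammamax)$ under $(R^*)'$ follows by linearity and continuity, and the norm bound $\|R\|\le 1$ is automatic from $\|(R^*)'\| = \|R^*\| = 1$.

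First I would record that $(R^*)' \colon \ell_\infty(\Gammamax) \to \ell_\infty(\Gammamax)$ is a bounded linear operator of norm $1$ on the bidual; the content of the lemma is thus purely the invariance $(R^*)'(X(\Gammamax)) \subseteq X(\Gammamax)$. Since $X(\Gammamax)$ is the closed linear span of $\{d_\gamma : \gamma \in \Gammamax\}$ and $(R^*)'$ is continuous, it suffices to verify that $(R^*)'d_\gamma \in X(\Gammamax)$ for every $\gamma$. To identify this element of $\ell_\infty(\Gammamax) = \ell_1(\Gammamax)^*$, I would pair it against the Schauder basis $(d^*_\zeta)$ of $\ell_1(\Gammamax)$; two functionals agreeing on this basis must coincide. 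Using property (4) of Theorem~\ref{R^*andGConstruction} and biorthogonality,
\[
\langle (R^*)'d_\gamma,\, d^*_\zeta\rangle \;=\; \langle d_\gamma,\, R^* d^*_\zeta\rangle \;=\; \begin{cases} 1 & \text{if } G(\zeta) \text{ is defined and } G(\zeta)=\gamma,\\ 0 & \text{otherwise.}\end{cases}
\]

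The natural candidate is then $y := \sum_{\zeta \in G^{-1}(\gamma)} d_\zeta$. By property (2), $G$ preserves rank, and property (1) shows the same is true on $\Delta_1$; hence $G^{-1}(\gamma) \subseteq \Delta_{\rank \gamma}$, which is finite. Thus $y$ is a genuine finite sum lying in $M_{\rank \gamma} \subseteq X(\Gammamax)$. A direct computation shows $\langle y, d^*_\zeta\rangle$ equals the right-hand side of the display above, so $(R^*)'d_\gamma = y \in X(\Gammamax)$, and the lemma follows. The proof is essentially bookkeeping: the substantive work was done in Theorem~\ref{R^*andGConstruction}, and the only real observation needed here is that rank preservation by $G$ confines the preimage $G^{-1}(\gamma)$ to the finite set $\Delta_{\rank\gamma}$, which is exactly what keeps the target vector inside the FDD and hence inside $X(\Gammamax)$.
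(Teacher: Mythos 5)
Your proof is correct and follows essentially the same route as the paper: both identify $(R^*)'d_\gamma$ with $\sum_{\zeta\in G^{-1}(\gamma)}d_\zeta$ by pairing against the basis $(d^*_\zeta)$ of $\ell_1(\Gammamax)$ and using property (4) of Theorem~\ref{R^*andGConstruction}, with the norm bound coming from $\|(R^*)'\|=\|R^*\|=1$. Your explicit remark that rank preservation makes $G^{-1}(\gamma)\subseteq\Delta_{\rank\gamma}$ finite is a small tidy addition the paper leaves implicit, but it is not a different argument.
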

\begin{proof}
It is a standard result that the dual operator $(R^*)'$ is bounded with the same norm as $R^*$. It follows that the restriction of the domain to $X(\Gammamax)$ is a bounded, linear operator into $\ell_1(\Gammamax)^*$ with norm at most $1$. It only remains to see that this restricted mapping actually maps into $X(\Gammamax)$. Since the family $(d_{\gamma})_{\gamma\in\Gammamax}$ is a basis for $X(\Gammamax)$, it is enough to see that the image of $d_{\gamma}$ under $(R^*)'$ lies in $X(\Gammamax)$. We claim that \[
(R^*)'d_{\delta} = \sum_{\gamma \in G^{-1}(\delta)} d_{\gamma} \]
which would complete the proof. Since $(d_{\gamma}^*)_{\gamma\in\Gammamax}$ is a basis for $\ell_{1}(\Gammamax)$ it is enough to show that for every $\theta \in \Gammamax$ \[
\big((R^*)'d_{\delta}\big)d_{\theta}^* = d_{\delta} (R^*d_{\theta}^*) = \big( \sum_{\gamma \in G^{-1}(\delta)} d_{\gamma} \big) d_{\theta}^* \]

The right hand side of this expression is easy to evaluate; it is only non-zero when $G(\theta) = \delta$, in which case it is equal to $1$. In particular, if $G(\theta)$ is undefined, then the right hand side of the expression is certainly 0, as is $d_{\delta}(R^*d_{\theta}^*)  = d_{\delta}(0)$. If $G(\theta)$ is defined, the left hand side of the expression is $d_{\delta}(d_{G(\theta)}^*)$ which is clearly $1$ if $G(\theta) = \delta$ and $0$ otherwise. So the expressions are indeed equal, as required.
\end{proof}
\begin{lem}\label{G^kalwaysundefined}
For every $\gamma \in \Gammamax$, there is a unique $l, 1\leq l \leq k$ such that $G^j(\gamma)$ is defined whenever $1\leq j < l$ but $G^l(\gamma)$ is undefined.
\end{lem}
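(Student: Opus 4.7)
The plan is to prove by strong induction on $\rank\gamma$ that every $\gamma \in \Gammamax$ satisfies $l(\gamma) \leq k$, where $l(\gamma) := \min\{l \geq 1 : G^l(\gamma) \text{ is undefined}\}$; the uniqueness part of the statement is then automatic, since the chain $\gamma, G(\gamma), G^2(\gamma), \dots$ is absorbing once it reaches ``undefined''. By iterating property (3) of Theorem~\ref{R^*andGConstruction}, this is equivalent to showing $(R^*)^k e_\gamma^* = 0$, which is the formulation most amenable to linear arguments.

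The base case $\gamma \in \Delta_1$ is immediate: $R^*$ acts as the backward shift $e_j^* \mapsto e_{j-1}^*$ on $\ell_1(\Delta_1)$, killing every basis vector after at most $k$ applications. For the inductive step I would first record an auxiliary fact, immediate by linearity from the induction hypothesis: $(R^*)^k b^* = 0$ for every $b^* \in \ell_1(\Gammamax_n)$. Moreover, since $R^*$ preserves rank (property (2)), iterates of a vector in $\ell_1(\Gammamax_n \setminus \Gammamax_p)$ remain in that subspace, and combined with $\ell_1(\Gammamax_p) \cap \ell_1(\Gammamax_n \setminus \Gammamax_p) = \{0\}$ this gives $P^*_{(p,\infty)}(R^*)^j b^* = 0$ precisely when $(R^*)^j b^* = 0$ --- a simple but crucial observation for the case analysis.

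For $\gamma = (n+1,p,\beta,b^*)$ of age $1$, unravelling the construction of $G$ gives $G^j(\gamma) = (n+1,p,\beta,(R^*)^j b^*)$ as long as $(R^*)^j b^* \neq 0$, and the auxiliary fact forces $l(\gamma) \leq k$. For $\gamma = (n+1,\xi,\beta,b^*)$ of age greater than $1$, set $l_0 := l(\xi) \leq k$ by induction; the same unravelling shows $G^j(\gamma) = (n+1, G^j(\xi), \beta, (R^*)^j b^*)$ for $1 \leq j \leq l_0 - 1$. At step $l_0$ there is a ``phase transition'': either $(R^*)^{l_0} b^* = 0$ and the chain terminates with $l(\gamma) = l_0 \leq k$, or $G^{l_0}(\gamma) = (n+1, \rank\xi, \beta, (R^*)^{l_0} b^*)$ is a new age-$1$ element to which we apply the previous paragraph. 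In this latter case the auxiliary fact forces $l_0 < k$, and the age-$1$ analysis terminates the chain within a further $k - l_0$ steps, yielding $l(\gamma) \leq l_0 + (k - l_0) = k$.

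I expect the main obstacle to be the bookkeeping in the age-greater-than-$1$ case, in particular verifying the recursive identity $G^j(\gamma) = (n+1, G^j(\xi), \beta, (R^*)^j b^*)$ and handling the phase transition to an age-$1$ element carefully. Everything else is routine, but this step requires a close re-reading of the age-greater-than-$1$ branch of the construction of $G$ in Theorem~\ref{R^*andGConstruction}.
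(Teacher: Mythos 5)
Your proposal is correct and takes essentially the same route as the paper: induction on the rank of $\gamma$, the same age-$1$ versus age-$>1$ case split, the auxiliary fact $(R^*)^k b^*=0$ for $b^*\in\ell_1(\Gammamax_n)$ deduced from the inductive hypothesis by linearity, and the same treatment of the step at which $G^{l_0}(\xi)$ first becomes undefined and the chain turns into an age-$1$ type element $(n+1,\rank\xi,\beta,(R^*)^{l_0}b^*)$. The one extra ingredient you flag as crucial, namely $P^*_{(p,\infty)}(R^*)^j b^*=0 \iff (R^*)^j b^*=0$ for $b^*$ supported in $\Gammamax_n\setminus\Gammamax_p$, is indeed true (because $\im P^*_{(0,p]}=\ell_1(\Gammamax_p)$ and $G$ preserves rank), but only the trivial direction is actually used, so it costs nothing.
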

\begin{proof}
The uniqueness is easy; if $G(\gamma)$ is defined, $l$ is the maximal $j \in \N$ such that $G^{j-1}(\gamma)$ is defined. Otherwise we must have $l=1$. So we only have to prove existence of such $l$.

We prove by induction on $n$ that if $\rank\gamma = n$ there is some $1\leq l \leq k$ such that $G^{l}(\gamma)$ is undefined, $G^j(\gamma)$ is defined if $j < l$. The case where $n=1$ is clear from the construction of the map $G$. So, inductively, we assume the statement holds whenever $k \leq n$. Let $\gamma \in \Delta_{n+1}$ and consider 2 cases.
\begin{enumerate}[(i)]
\item $\age\gamma = 1$. We write $\gamma = (n+1,p, \beta, b^*)$. Now $b^* \in \ell_1(\Gammamax_n)$ and by the inductive hypothesis, for every $\theta \in \Gammamax$ with $\rank\theta \leq n$, there is some $l \leq k$ such that $G^l(\theta)$ is undefined. It follows that we must have $(R^*)^lb^* = 0$ for some $1\leq l \leq k$. So it is certainly true that $P^*_{(p,\infty)}(R^*)^lb^* = 0$ for some $1 \leq l \leq k$. It follows by construction of the map $G$ that the `$l$' we seek is the minimal $l$ ($1\leq l \leq k$) such that $P^*_{(p,\infty)}(R^*)^lb^* = 0$.
\item $\age\gamma > 1$. We write $\gamma = (n+1, \xi, \beta, b^*)$. If $G(\gamma)$ is undefined we are done; we must have $l=1$. Otherwise we have either $G(\gamma) = (n+1, \rank\xi, \beta, R^*b^*)$ or $G(\gamma) = (n+1, G(\xi), \beta, R^*b^*)$. In the first of these two possibilities, the same argument as in the previous case shows that the $l$ we seek is the minimal $l$ ($2 \leq l \leq k$) such that $P^*_{(p,\infty)}(R^*)^lb^* = 0$. In the latter case, $G(\xi)$ is defined. But, since $\rank \xi = \rank G(\xi) < n$, we know by the inductive hypothesis that $G^{l_0}(\xi)$ is undefined for some $2\leq l_0 \leq k$ and $G^j(\xi)$ is defined for $j<l_0$. Now, if $P^*_{(\rank\xi, \infty)} (R^*)^{l_0}b^* = 0$, then it follows from construction of $G$ that $G^{l_0}(\gamma)$ is undefined and $G^j(\gamma)$ is defined for $j < l_0$ so we are done. Otherwise, it follows from an argument above that $l_0 < k$, and there is some (minimal) $l$, $l_0 < l \leq k$ with $P^*_{(\rank\xi, \infty)} (R^*)^l b^* = 0$. Once again, this is the desired $l$.
\end{enumerate}
\end{proof}
\begin{cor}\label{R^k=0}
The maps $R^* \colon \ell_1(\Gammamax) \to \ell_1(\Gammamax)$ and $R \colon X(\Gammamax) \to X(\Gammamax)$ satisfy $(R^*)^k = 0$ and $R^k = 0$.
\end{cor}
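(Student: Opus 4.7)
The plan is to deduce both assertions from Lemma~\ref{G^kalwaysundefined} together with the defining property (3) of $R^*$ from Theorem~\ref{R^*andGConstruction}. First I would verify the identity $(R^*)^k = 0$ by checking it on the total family $(e_\gamma^*)_{\gamma \in \Gammamax}$. Indeed, property (3) yields, by a straightforward induction on $j$, that
\[
(R^*)^j (e_\gamma^*) \;=\;
\begin{cases} e_{G^j(\gamma)}^* & \text{if $G^i(\gamma)$ is defined for all $1\le i \le j$}, \\ 0 & \text{otherwise}, \end{cases}
\]
because once some iterate $G^i(\gamma)$ is undefined, a further application of $R^*$ sends the resulting $0$ to $0$. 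Now Lemma~\ref{G^kalwaysundefined} furnishes, for each $\gamma \in \Gammamax$, an integer $1 \le l \le k$ such that $G^l(\gamma)$ is undefined, and therefore $(R^*)^l(e_\gamma^*) = 0$ and a fortiori $(R^*)^k(e_\gamma^*) = 0$.

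Next I would use linearity and continuity: $(R^*)^k$ is a bounded linear operator on $\ell_1(\Gammamax)$, and it vanishes on the linearly dense set $\{e_\gamma^* : \gamma \in \Gammamax\}$, so $(R^*)^k = 0$ on all of $\ell_1(\Gammamax)$.

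Finally, for the statement about $R$, I would invoke Lemma~\ref{DualOfR^*RestrictsProperly}, which identifies $R$ with the restriction of the Banach-space dual $(R^*)'$ to the subspace $X(\Gammamax) \subseteq \ell_1(\Gammamax)^*$. Since dualization is multiplicative, $((R^*)^k)' = ((R^*)')^k$, and the restriction of $((R^*)')^k$ to the invariant subspace $X(\Gammamax)$ coincides with $R^k$. As $(R^*)^k = 0$, we conclude $((R^*)')^k = 0$ and hence $R^k = 0$.

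There is no substantive obstacle here; the content of the corollary is packaged essentially verbatim in the preceding lemma, and the work is just to transfer the pointwise vanishing on the generating functionals to the operator identity and then to dualize onto $X(\Gammamax)$.
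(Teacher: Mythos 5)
Your proposal is correct and follows essentially the same route as the paper: verify $(R^*)^k$ vanishes on the generating functionals (equivalently on $c_{00}(\Gammamax)$) via Lemma~\ref{G^kalwaysundefined} and property (3), extend by density and continuity, and then transfer to $R$ through its definition as the restriction of the dual operator, using Lemma~\ref{DualOfR^*RestrictsProperly} for the invariance of $X(\Gammamax)$. The paper's proof is just a terser statement of exactly this argument.
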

\begin{proof}
It is clear from lemma \ref{G^kalwaysundefined} that the restriction of $(R^*)^k$ to $c_{00}(\Gammamax)$ is the zero map. It follows by density and continuity that $(R^*)^k=0$. The other claims are immediate from the definition of $R$ as the restriction of the dual operator of $R^*$.
\end{proof} 

To obtain the extra constraints that we place on  ``odd-weight'' elements we will need a function $\sigma \colon \Gamma^{\text{max}} \to \N$ which satisfies
\begin{enumerate}[(1)]
\item $\sigma$ is injective
\item $\sigma(\gamma) > \rank\gamma \quad \forall \, \gamma \in \Gamma^{\text{max}}$
\item for $\gamma \in \Delta_{n+1}$ (i.e. $\rank\gamma = n+1$), $\sigma(\gamma) > \max \{ \sigma(\xi) : \xi \in \Gamma_n^{\text{max}} \} $
\end{enumerate}
Such a $\sigma$ can be constructed recursively as $\Gamma^{\max}$ is constructed. Now, for each $\gamma \in \Gamma^{\text{max}}$ we can well-define a finite set $\Sigma(\gamma)$ by \[
\Sigma(\gamma) := \{ \sigma (\gamma) \} \cup \bigcup_{j=1}^{k-1} \bigcup_{\delta\in G^{-j} (\gamma) } \{ \sigma(\delta) \}
\]
where $G^{-j}(\gamma) := \{ \theta \in \Gammamax : G^{j}(\theta) = \gamma \}$ (so in particular $G^l(\gamma) \in \Gammamax$ for every $l \leq j$).
We have the following lemma

\begin{lem}\label{SigmaGammaContainedinSigmaGGamma}
If $\gamma \in \Gammamax$ is such that $G(\gamma)$ is defined then $\Sigma (\gamma) \subseteq \Sigma (G(\gamma))$
\end{lem}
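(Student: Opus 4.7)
The plan is to unpack the definition of $\Sigma(\gamma)$ and check, for each of its elements, that it appears in $\Sigma(G(\gamma))$. Recall
\[
\Sigma(\gamma) = \{\sigma(\gamma)\} \cup \bigcup_{j=1}^{k-1} \bigcup_{\delta \in G^{-j}(\gamma)} \{\sigma(\delta)\},
\]
so there are two kinds of elements to handle: the value $\sigma(\gamma)$ itself, and the values $\sigma(\delta)$ coming from preimages $\delta \in G^{-j}(\gamma)$ with $1 \leq j \leq k-1$.

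For the first, note that since $G(\gamma)$ is defined, $\gamma$ is tautologically an element of $G^{-1}(G(\gamma))$, and so $\sigma(\gamma)$ appears in $\Sigma(G(\gamma))$ via the $j=1$ term of the union. For the second, take $\delta \in G^{-j}(\gamma)$ with $1 \leq j \leq k-2$. Applying $G$ to the identity $G^{j}(\delta) = \gamma$ (all the intermediate applications being defined by assumption on $\delta$ and $\gamma$) yields $G^{j+1}(\delta) = G(\gamma)$, i.e.\ $\delta \in G^{-(j+1)}(G(\gamma))$. Since $2 \leq j+1 \leq k-1$, this immediately gives $\sigma(\delta) \in \Sigma(G(\gamma))$.

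The only remaining (and slightly delicate) case is $j = k-1$. I expect this to be the main obstacle, since it would seemingly force us to look at the $k$-th iterate of $G$, which the definition of $\Sigma(G(\gamma))$ does not accommodate. The resolution is that this case is vacuous: if some $\delta \in G^{-(k-1)}(\gamma)$ existed, then $G^{k-1}(\delta) = \gamma$ would be defined and hence $G^{k}(\delta) = G(\gamma)$ would also be defined (by hypothesis). This contradicts Lemma \ref{G^kalwaysundefined}, which asserts that for every element of $\Gammamax$ some iterate $G^{l}$ with $l \leq k$ is undefined, and in particular $G^{k}(\delta)$ is always undefined. Therefore $G^{-(k-1)}(\gamma) = \emptyset$, and the inclusion $\Sigma(\gamma) \subseteq \Sigma(G(\gamma))$ is established.
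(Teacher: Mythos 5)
Your proof is correct and follows essentially the same route as the paper's: both place $\sigma(\gamma)$ in $\Sigma(G(\gamma))$ via $\gamma \in G^{-1}(G(\gamma))$, push each preimage $\delta \in G^{-j}(\gamma)$ one step further to $G^{-(j+1)}(G(\gamma))$, and invoke Lemma~\ref{G^kalwaysundefined} to rule out the problematic index. Your explicit split of the case $j = k-1$ as vacuous is only a presentational variant of the paper's observation that $j < k-1$ must hold.
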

\begin{proof}
Certainly $\gamma \in G^{-1} (G(\gamma))$ so $\sigma(\gamma) \in \Sigma (G(\gamma))$. Suppose $\sigma(\delta) \in \Sigma(\gamma), \delta \neq \sigma$. So, there is some $ 1 \leq j \leq k-1$ with $\delta \in G^{-j}(\gamma)$, i.e. there is some $\delta$ such that $G^{j}(\delta) = \gamma$. Since $G(\gamma)$ is defined, we must have $G^{j+1}(\delta) = G(\gamma) \in \Gammamax$. In particular, by lemma \ref{G^kalwaysundefined}, we must in fact have had $j < k-1$ so that $j+1 \leq k-1$. Thus $\delta \in G^{-(j+1)} (G(\gamma))$ and $\sigma(\delta) \in \Sigma (G(\gamma))$, as required.
\end{proof}

Before giving our main construction, we document two more observations.

\begin{lem} \label{MonotonicityOfSigmaSets}
If $\gamma, \gamma' \in \Gammamax$, $\rank \gamma > \rank\gamma'$ then $\Sigma(\gamma) > \Sigma(\gamma')$, i.e. $\max \{ k : k \in \Sigma(\gamma') \} < \min \{ k : k \in \Sigma(\gamma) \}$
\end{lem}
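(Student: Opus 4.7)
The strategy is to reduce the lemma to the strict monotonicity of $\sigma$ with respect to rank, which is essentially built into property (3) of $\sigma$. The crucial input is that $G$ preserves rank, so that both $\Sigma(\gamma)$ and $\Sigma(\gamma')$ consist entirely of $\sigma$-values of elements of a single rank, namely $\rank\gamma$ and $\rank\gamma'$ respectively.

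First I would show that every element of $\Sigma(\gamma)$ has the form $\sigma(\delta)$ for some $\delta \in \Gammamax$ with $\rank\delta = \rank\gamma$. Such an element is either $\sigma(\gamma)$ itself, or $\sigma(\delta)$ for some $\delta \in G^{-j}(\gamma)$ with $1 \le j \le k-1$, in which case $G^j(\delta) = \gamma$ and all intermediate iterates $G^l(\delta)$ ($l \le j$) are defined by Lemma \ref{G^kalwaysundefined}. By property (2) of Theorem \ref{R^*andGConstruction}, $G$ preserves rank wherever it is defined, so a straightforward induction on $j$ yields $\rank\delta = \rank G^j(\delta) = \rank\gamma$. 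The identical argument applied to $\gamma'$ shows that every element of $\Sigma(\gamma')$ is of the form $\sigma(\delta')$ with $\rank\delta' = \rank\gamma'$.

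Now suppose $\rank\gamma > \rank\gamma'$, and pick any $\sigma(\delta) \in \Sigma(\gamma)$ and $\sigma(\delta') \in \Sigma(\gamma')$. By the previous step, $\rank\delta = \rank\gamma > \rank\gamma' = \rank\delta'$. Writing $\rank\delta = n+1$, we have $\rank\delta' \le n$, so $\delta' \in \Gammamax_n$ while $\delta \in \Delta_{n+1}$. Property (3) of $\sigma$ then gives $\sigma(\delta) > \sigma(\delta')$ directly. Since both $\Sigma(\gamma)$ and $\Sigma(\gamma')$ are finite sets (being indexed by $G^{-j}(\gamma)$ for finitely many $j$, each of which is a finite set as $\Gammamax$ has only finitely many elements of any given rank), we may take the minimum and maximum respectively to obtain $\max\Sigma(\gamma') < \min\Sigma(\gamma)$, which is the claim.

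I do not expect any real obstacle here: once one unravels the definitions of $\Sigma(\gamma)$ and invokes the rank-preserving property of $G$, the lemma is immediate from property (3) of $\sigma$. The only point requiring a moment's care is the verification that $G$ iterates preserve rank, which follows by induction from property (2) of Theorem \ref{R^*andGConstruction} together with the fact (Lemma \ref{G^kalwaysundefined}) that each iterate $G^l(\delta)$ for $l \le j$ is indeed defined and lies in $\Gammamax$.
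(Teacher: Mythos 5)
Your proof is correct and follows the same route as the paper, which simply observes that the lemma is immediate from the definition of $\Sigma$, the rank-preservation of $G$, and property (3) of $\sigma$; you have merely written out in full the details the paper leaves implicit.
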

\begin{proof}
This follows immediately from the definition of the sets $\Sigma (\gamma)$ and $\Sigma(\gamma')$, the fact that $G$ is rank preserving and the assumption that for $\gamma \in \Delta_{n+1}$, $\sigma(\gamma) > \max \{ \sigma(\xi) : \xi \in \Gammamax_n \}$.
\end{proof}

\begin{lem} \label{IntersectionPropertyofSigmaSets}
Suppose $\gamma, \delta \in \Gammamax$. If $\sigma(\gamma) \in \Sigma(\delta)$ then either $\sigma = \delta$ or there is some $1 \leq j \leq k-1$ such that $G^j(\gamma) = \delta$.
\end{lem}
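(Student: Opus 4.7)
The proof should be essentially a direct unwinding of the definition of $\Sigma(\delta)$ combined with the injectivity of $\sigma$ (property (1) of $\sigma$ listed in the paper). I read the statement as containing a minor typo: the conclusion ``$\sigma = \delta$'' should read ``$\gamma = \delta$'', since $\sigma$ is a function and $\delta$ is an element of $\Gammamax$; this is the version I would prove.

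The plan is as follows. By the very definition,
\[
\Sigma(\delta) \;=\; \{\sigma(\delta)\} \;\cup\; \bigcup_{j=1}^{k-1}\bigcup_{\theta\in G^{-j}(\delta)} \{\sigma(\theta)\}.
\]
So the assumption $\sigma(\gamma)\in\Sigma(\delta)$ splits into two exhaustive cases: either (a) $\sigma(\gamma)=\sigma(\delta)$, or (b) there exist $1\le j\le k-1$ and $\theta\in G^{-j}(\delta)$ with $\sigma(\gamma)=\sigma(\theta)$. In case (a), the injectivity of $\sigma$ immediately yields $\gamma=\delta$. In case (b), the injectivity of $\sigma$ forces $\gamma=\theta$, and since $\theta\in G^{-j}(\delta)$ means precisely that $G^j(\theta)=\delta$ (with all intermediate iterates defined by the definition of $G^{-j}$), we conclude $G^j(\gamma)=\delta$ for this same $j$.

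There is no real obstacle here; the lemma is essentially tautological once one observes that $\Sigma(\delta)$ is, by construction, the $\sigma$-image of the finite set $\{\delta\}\cup\bigcup_{j=1}^{k-1} G^{-j}(\delta)$, and $\sigma$ is injective. The only subtle point worth flagging in the write-up is the reminder (already built into the definition of $\Sigma$) that ``$\theta\in G^{-j}(\delta)$'' carries with it the information that $G^l(\theta)\in\Gammamax$ for all $l\le j$, so the iterate $G^j(\gamma)=\delta$ in the conclusion is meaningful and not merely formal.
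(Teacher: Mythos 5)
Your proof is correct and follows exactly the paper's argument: split according to the two parts of the definition of $\Sigma(\delta)$ and apply injectivity of $\sigma$ in each case (and you are right that ``$\sigma=\delta$'' in the statement is a typo for ``$\gamma=\delta$'', which is also what the paper's own proof concludes).
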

\begin{proof}
Since $\sigma(\gamma) \in \Sigma(\delta)$ there are two possibilities. Either $\sigma(\gamma) = \sigma(\delta)$ or there is some $1\leq j \leq k-1$ and $\theta \in \Gammamax$ with $G^j(\theta) = \delta$ and $\sigma(\gamma) = \sigma(\theta)$. By injectivity of $\sigma$, this implies that either $\gamma = \delta$, or that $\theta = \gamma$ and $G^j(\gamma) = \delta$ as requied.
\end{proof}

We are finally in a position to describe the main construction. We will take a subset $\Gamma(k) \subset \Gamma^{\text{max}} (=\Gammamax(k))$ by placing some restrictions on the elements of odd weight we permit. (Again, we drop the dependence on '$k$' and just write $\Gamma$ for $\Gamma(k)$). As a consequence of imposing these additional odd weight restrictions, we are also forced to (roughly speaking) also remove those elements $(n+1, p, \beta, b^*)$ and $(n+1, \xi, \beta, b^*)$ of $\Gammamax$ for which the support of $b^*$ is not contained in $\Gamma$, in order that we can apply the Bourgain-Delbaen construction to obtain a space $X(\Gamma)$. Note that the subset $\Gamma$ will also be constructed inductively, so there is no circular argument here. We will denote by $\Delta_n'$ the set of all elements in $\Gamma$ having rank $n$, and denote by $\Gamma_n$ the union $\Gamma_n = \cup_{j\leq n} \Delta_j'$. The permissible elements of odd weight will be as follows. For an age 1 element of odd weight, $\gamma = (n+1, p, m_{2j - 1}^{-1}, b^*)$ we insist that either $b^* = 0$ or $b^* = e_{\eta}^*$ where $\eta \in \Gamma_n \setminus \Gamma_p$ and $\weight\eta = m_{4i}^{-1} < n_{2j-1}^{-2}$. 
For an odd weight element of age $> 1$, $\gamma = (n+1, m_{2j - 1}^{-1}, \xi, b^*)$ we insist that either $b^* = 0$ or $b^* = e_{\eta}^*$ where $\eta \in \Gamma_n \setminus \Gamma_{\rank\xi}$ and $\weight\eta = m_{4k}^{-1} < n_{2j-1}^{-2}$, $k \in \Sigma(\xi)$.Let us be more precise:

\begin{defn} \label{DefnOfGammaAndSpace}

We define recursively sets $\Delta_{n}' \subseteq \Delta_n$. Then $\Gamma_n := \cup_{j \leq n} \Delta'_j$ and $\Gamma := \cup_{n\in\N}\Delta_n' \subseteq \Gammamax$. To begin the recursion, we set $\Delta_1' = \Delta_1$. Then

\begin{align*}
\Delta_{n+1}' &= \bigcup_{j=1}^{\lfloor(n+1)/2\rfloor}
\left\{(n+1,p, m_{2j}^{-1},b^*): 0\leq p<n,\ b^*\in B_{p,n}\cap \ell_1(\Gamma_n) \right\}\\ &\cup
\bigcup_{p=1}^{n-1}\bigcup_{j=1}^{\lfloor
p/2\rfloor}\left\{(n+1,\xi,m_{2j}^{-1},b^*): \xi\in \Delta_p',
\weight \xi = m_{2j}^{-1},\ \age\xi<n_{2j},\
b^*\in B_{p,n}\cap \ell_1(\Gamma_n \setminus \Gamma_p \right\}\\
&\cup\bigcup_{j=1}^{\lfloor(n+2)/2\rfloor}
\left\{(n+1,m_{2j-1}^{-1},b^*):
b^*=0 \text{ or } b^*=e_{\eta}^* \text{ with } \eta\in \Gamma_n \text{ and } \weight \eta= m_{4i}^{-1}<n_{2j-1}^{-2}\right\}\\
&\cup  \bigcup_{p=1}^{n-1}\bigcup_{j=1}^{\lfloor(
p+1)/2\rfloor}\left\{(n+1,\xi,m_{2j-1}^{-1},b^*): \xi\in
\Delta_p', \weight \xi = m_{2j-1}^{-1},\ \age\xi<n_{2j-1} \right.,\\  &\left.
\qquad\qquad\qquad\qquad b^*=0 \text{ or } b^*=e_{\eta}^* \text{ with } \eta\in\Gamma_n\setminus\Gamma_p,\ \weight\eta=
m_{4k}^{-1}<n_{2j-1}^{-2},\ k \in \Sigma(\xi) \right\}.
\end{align*}
Here the $B_{p,n}$ are defined as in \ref{notationBpn}. We define $\X_k$ to be the Bourgain-Delbaen space $X(\Gamma)$ where $\Gamma$ is the subset of $\Gammamax$ just defined.
\end{defn}

For the rest of the paper we will work with the space $\X_k$. We will also drop the prime from the sets $\Delta_{n+1}'$ defined in \ref{DefnOfGammaAndSpace} as we will be working with the set $\Gamma$. As in \cite{AH}, the structure of the space $\X_k$ is most easily
understood in terms of the basis $(d_\gamma)_{\gamma\in\Gamma}$ and the biorthogonal
functionals $d_\gamma^*$. However, we will need to work with the evaluation functionals
$e^*_\gamma$ in order to estimate norms.  To this end, we have the following proposition.

\begin{prop} \label{EvalAnal}Let $n$ be a positive integer and
let $\gamma$ be an element of $\Delta_{n+1}$ of weight $m_j^{-1}$
and age $a \le n_j$. Then there exist natural numbers
$p_0<p_1<\cdots<p_a=n+1$, elements $\xi_1,\dots,\xi_a=\gamma$ of
weight $m_j^{-1}$ with $\xi_r\in \Delta_{p_r}$ and functionals
$b^*_r\in \ball\ell_1\left(\Gamma_{p_r-1}\setminus
\Gamma_{p_{r-1}}\right)$ such that
 \begin{align*} e^*_\gamma &= \sum_{r=1}^a d^*_{\xi_r} +
m_j^{-1}\sum_{r=1}^a P^*_{(p_{r-1},\infty)}b^*_r\\
&=\sum_{r=1}^a d^*_{\xi_r} + m_j^{-1}\sum_{r=1}^a
P^*_{(p_{r-1},p_r)}b^*_r.
\end{align*}
If $1\le t<a$ we have
$$
e_\gamma^*= e^*_{\xi_t} + \sum_{r=t+1}^ad^*_{\xi_r} + m_j^{-1}
\sum_{r=t+1}^a P^*_{(p_{r-1},\infty)}b^*_r\,.
$$
\end{prop}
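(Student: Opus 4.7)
The plan is to prove the first displayed identity by induction on the age $a$ of $\gamma$, which is the natural way to unfold the recursive definition $e^*_\gamma = c^*_\gamma + d^*_\gamma$ together with the formulas for $c^*_\gamma$ given in clause (B) of Theorem \ref{BDThm}. The two remaining identities will then follow as formal consequences.

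For the base case $a=1$, the element $\gamma$ is Type 1, so $\gamma=(n+1,p,m_j^{-1},b^*)$ and $c^*_\gamma = m_j^{-1}P^*_{(p,\infty)}b^*$. Taking $p_0=p$, $p_1=n+1$, $\xi_1=\gamma$, and $b^*_1=b^*$ gives the required expression directly. For the inductive step, assume the formula holds for all elements of age strictly less than $a$, and take $\gamma\in\Delta_{n+1}$ of age $a>1$. By the Type 2 clause, $\gamma=(n+1,\xi,m_j^{-1},b^*)$ with $\weight\xi=m_j^{-1}$ and $\age\xi=a-1$, and hence $c^*_\gamma = e^*_\xi + m_j^{-1}P^*_{(\rank\xi,\infty)}b^*$. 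Applying the inductive hypothesis to $\xi$ yields $p_0<p_1<\cdots<p_{a-1}=\rank\xi$ and vectors $\xi_1,\dots,\xi_{a-1}=\xi$, $b^*_1,\dots,b^*_{a-1}$ with
\[
e^*_\xi = \sum_{r=1}^{a-1}d^*_{\xi_r} + m_j^{-1}\sum_{r=1}^{a-1}P^*_{(p_{r-1},\infty)}b^*_r.
\]
Setting $p_a=n+1$, $\xi_a=\gamma$ and $b^*_a=b^*$, substituting into $e^*_\gamma=d^*_\gamma+c^*_\gamma$ closes the induction, and the membership condition $b^*_r\in\ball\ell_1(\Gamma_{p_r-1}\setminus\Gamma_{p_{r-1}})$ is inherited from Assumption \ref{DeltaUpperAssump} applied at each step.

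The equality of the two displayed expressions for $e^*_\gamma$ is purely formal: since $\supp b^*_r\subseteq\Gamma_{p_r-1}\setminus\Gamma_{p_{r-1}}$, the projection $P^*_{(p_{r-1},\infty)}b^*_r$ already vanishes outside $(p_{r-1},p_r)$, so it equals $P^*_{(p_{r-1},p_r)}b^*_r$. Finally, the formula starting from $t$, valid for $1\le t<a$, is obtained by applying the first identity to $\xi_t$ (using the first $t$ indices of the decomposition, which is consistent because each initial segment $\xi_1,\dots,\xi_t$ is precisely the analysis of $\xi_t$) to get $e^*_{\xi_t}=\sum_{r=1}^{t}d^*_{\xi_r}+m_j^{-1}\sum_{r=1}^{t}P^*_{(p_{r-1},\infty)}b^*_r$, and then substituting this expression back into the full formula for $e^*_\gamma$ in place of the first $t$ terms of each sum.

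There is no real obstacle here; the only subtlety is keeping the bookkeeping consistent between the recursion on $\age\gamma$ and the labelling $p_0<p_1<\cdots<p_a=n+1$, and in particular checking at the inductive step that the $\xi_r$ produced for $\xi$ can be reused verbatim as the first $a-1$ of the $\xi_r$ for $\gamma$, with $p_{a-1}=\rank\xi$ correctly slotting in below $p_a=n+1=\rank\gamma$.
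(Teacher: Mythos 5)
Your proposal is correct and is essentially the paper's own argument: the paper proves this by the same induction on the age $a$ (deferring the details to Argyros--Haydon), with the only new point being that $p_0$ need not be $0$, which your base case $p_0=p$ handles explicitly. The passage from $P^*_{(p_{r-1},\infty)}$ to $P^*_{(p_{r-1},p_r)}$ and the recovery of the truncated identity from the initial segments of the analysis are exactly as in the standard argument.
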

\begin{proof}
The proof is an easy induction on the age $a$ of $\gamma$. We omit the details because the argument is the same as in \cite{AH} except our $p_0$ is not necessarily 0.
\end{proof}
\begin{rem}
As in \cite{AH}, we shall refer to any of the above identities as the evaluation analysis of the element $\gamma$, and the data $(p_0, (p_r, b_r^*, \xi_r)_{1\leq r \leq a} )$ as the analysis of $\gamma$. We will omit the $p_0$ when $p_0 = 0$.
\end{rem}

We will now construct the operator $S: \X_k \to \X_k$. We need:
\begin{prop} \label{S^*construction}
$\Gamma$ is invariant under $G$. More precisely, if $\, \gamma \in \Gamma \subseteq \Gammamax$ and $G(\gamma)$ is defined, then $G(\gamma) \in \Gamma$. It follows that the map $G\colon \Gammamax \to \Gammamax\cup\{\text{undefined}\}$ defined in theorem \ref{R^*andGConstruction} restricts to give $F \colon \Gamma \to \Gamma\cup\{\text{undefined}\}$. Consequently the map $R^* \colon \ell_1(\Gammamax) \to \ell_1(\Gammamax)$ (also defined in \ref{R^*andGConstruction}) can be restricted to the subspace $\ell_1(\Gamma) \subseteq \ell_1(\Gammamax)$ giving $S^*\colon\ell_1(\Gamma) \to \ell_1(\Gamma)$. $S^*$ is a bounded linear map on $\ell_1(\gamma)$ of norm 1 which satisfies 
\[
S^*e_{\gamma}^* = \begin{cases} 0 & \text{ if $F(\gamma)$ is undefined} \\ e_{F(\gamma)}^* & \text{ otherwise} \end{cases}
\]
for every $\gamma \in \Gamma$. Moreover, the dual operator of $S^*$ restricts to $\X_k$ to give a bounded linear operator $S \colon \X_k \to \X_k$ of norm at most 1, satisfying $S^j \neq 0$ for $1\leq j \leq k-1$, $S^k = 0$.
\end{prop}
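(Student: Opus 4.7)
My plan is to reduce everything to one key claim: that $\Gamma$ is invariant under $G$, which I will prove by induction on rank. Once this is established, $F := G|_\Gamma$ and $S^* := R^*|_{\ell_1(\Gamma)}$ are automatically well-defined, and the norm bound together with the formulae for $S^* e_\gamma^*$ and $S^* d_\gamma^*$ all transfer immediately from Theorem \ref{R^*andGConstruction}. The descent to $S = (S^*)'|_{\X_k}$ then follows Lemma \ref{DualOfR^*RestrictsProperly} essentially verbatim: one shows $(S^*)' d_\delta = \sum_{\gamma \in F^{-1}(\delta)} d_\gamma$ by pairing with each $d_\theta^*$, and observes that rank preservation makes this a finite sum and exhibits the image as an element of $X(\Gamma)$.

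For the induction the base case $\Delta_1$ is immediate from $G|_{\Delta_1}\colon j\mapsto j-1$. Inductively, suppose invariance holds through rank $n$ and take $\gamma \in \Delta_{n+1}' \subseteq \Gamma$ with $G(\gamma)$ defined. The inductive hypothesis, together with rank preservation of $G$, implies that $R^*$ maps $\ell_1(\Gamma_n)$ into itself; and since $R^* e_\eta^* \in \{0, e_{G(\eta)}^*\}$, it also preserves $B_{p,n}$. For $\gamma$ of even weight $m_{2j}^{-1}$ this is essentially all that needs checking, because $G$ preserves weight and rank and does not increase age, so $G(\gamma)$ falls into the same even-weight family as $\gamma$.

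The main obstacle is the odd-weight case, which is the one place that genuinely uses the bespoke structure of $\Gamma$. Here $b^* = 0$ or $b^* = e_\eta^*$ with $\weight \eta = m_{4k}^{-1} < n_{2j-1}^{-2}$, so $R^* b^*$ is either $0$ or $e_{G(\eta)}^*$; rank and weight preservation transfer the admissibility of $\eta$ to $G(\eta)$. For an age $>1$ odd-weight $\gamma = (n+1, \xi, m_{2j-1}^{-1}, b^*)$ I will split as in the construction of $G$: if $G(\xi)$ is undefined, then $G(\gamma)$ collapses to an age $1$ element $(n+1, \rank\xi, m_{2j-1}^{-1}, R^*b^*)$ whose $\Sigma$-condition is vacuous; if $G(\xi)$ is defined, then $G(\gamma) = (n+1, G(\xi), m_{2j-1}^{-1}, R^*b^*)$, and the required condition $k \in \Sigma(G(\xi))$ follows from $k \in \Sigma(\xi)$ via Lemma \ref{SigmaGammaContainedinSigmaGGamma}.

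Finally, $S^k = 0$ is inherited from $(R^*)^k = 0$ (Corollary \ref{R^k=0}) by restriction and dualization. For $S^j \neq 0$ with $1 \leq j \leq k-1$: rank preservation forces $F^{-1}(i) \subseteq \Delta_1$ for each $i \in \Delta_1$, and on $\Delta_1$ the map $F$ is the predecessor shift, so $F^{-1}(i) = \{i+1\}$ for $0 \leq i \leq k-2$. The formula for $(S^*)'$ from the first paragraph then yields $S d_i = d_{i+1}$ in that range, and hence $S^j d_0 = d_j \neq 0$.
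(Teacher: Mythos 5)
Your proposal is correct and follows essentially the same route as the paper: an induction on rank establishing that $\Gamma$ is $G$-invariant (with the even-weight cases handled by weight/rank preservation and age non-increase, the odd-weight age $>1$ case split according to whether $G(\xi)$ is defined, using Lemma \ref{SigmaGammaContainedinSigmaGGamma} exactly where the paper does), followed by the same transfer of the formula $Sd_\delta=\sum_{\gamma\in F^{-1}(\delta)}d_\gamma$ from Lemma \ref{DualOfR^*RestrictsProperly}, with $S^k=0$ from Corollary \ref{R^k=0} and $S^j\neq 0$ read off from the action on $d_m$, $m\in\Delta_1$. Your spelled-out verification that $F^{-1}(i)=\{i+1\}$ on $\Delta_1$, hence $S^jd_0=d_j$, is just a more explicit version of the paper's closing remark.
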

\begin{proof}
It is enough to show that when $\gamma \in \Gamma$ and $G(\gamma)$ is defined, then $G(\gamma) \in \Gamma$. The claims about the operator $S^*$ follow immediately from the definition of $S^*$ as the restriction of $R^*$ and the definition of $R^*$. The fact that $S\colon \X \to \X$ is well defined follows by the same argument as in \ref{DualOfR^*RestrictsProperly}; indeed it is seen that for $\delta \in \Gamma$, \[
Sd_{\delta} = \sum_{\gamma \in F^{-1}(\delta)}d_{\gamma}
\]
Moreover, by corollary \ref{R^k=0}, we see that $(S^*)^k= 0$ and therefore that $S^k=0$. That $S^j \neq 0$ for $1\leq j \leq k-1$ is clear from the above formula and consideration of the elements $d_{m}$ for $m \in \Delta_1$.

We use induction on the rank of $\gamma$ to prove that if $\gamma \in \Gamma$ and $G(\gamma)$ is defined, then $G(\gamma) \in \Gamma$. This is certainly true when $\rank\gamma = 1$. Suppose by induction, that whenever $\gamma \in \Gamma$, $\rank\gamma \leq n$ and $G(\gamma)$ is defined, $G(\gamma) \in \Gamma$ and consider a $\gamma \in \Gamma$, of rank $n+1$ such that $G(\gamma)$ is defined. Let us suppose first that this $\gamma$ has age 1. We can write $\gamma = (n+1, p, \beta, b^*)$ where $\supp b^* \subseteq \Gamma_n \setminus \Gamma_p$, and we write $b^* = \sum_{\eta \in \Gamma_n \setminus \Gamma_p} a_{\eta}e_{\eta}^*$. Since $G(\gamma)$ is defined, we have $G(\gamma) =  (n+1, p, \beta, S^*b^*)$. We consider further sub-cases. If $\beta = m_{2j}^{-1}$ for some $j$ (i.e. $\gamma$ is an ``even weight'' element), we see that the only way $G(\gamma)$ fails to be in $\Gamma$ is if $\supp S^*b^* \nsubseteq \Gamma_n \setminus \Gamma_p$. But \[
S^*b^* = \sum_{\substack { \eta \in \Gamma_n\setminus\Gamma_p \, \cap \\ \{\eta  \, : \, F(\eta) \text{ is defined} \} } } a_{\eta}e_{F(\eta)}^*
\]
and by the inductive hypothesis all the $F(\eta)$ in this sum are in $\Gamma_n$. So $\supp S^*b^* \subseteq \Gamma_n \setminus \Gamma_p$ and $G(\gamma) = F(\gamma) \in \Gamma$ as required. In the case where $\beta = m_{2j-1}^{-1}$ (i.e. $\gamma$ is an ``odd-weight'' element) we must also check that the odd weight element $G(\gamma)$ is of a permissible form. Since $G(\gamma)$ is defined, in particular we must have that $P^*_{(p,\infty)} S^*b^* \neq 0$. This of course implies that $S^*b^* \neq 0$ and $b^* \neq 0$. As $\gamma \in \Gamma$, $b^* = e_{\eta}^*$ for some $\eta \in \Gamma_n \setminus \Gamma_p$ where $\weight\eta = m_{4i}^{-1} < n_{2j-1}^{-2}$. Since $S^*b^* \neq 0$, we must have $S^*e_{\eta}^* = e_{F(\eta)}^*$ where in particular, $F(\eta)$ is defined and lies in $\Gamma$ by the inductive hypothesis. Since $G$ is weight preserving, we have $\weight F(\eta) = \weight\eta = m_{4i}^{-1} < n_{2j-1}^{-2}$. Moreover, since $G$ preserves rank, we can now conclude that $F(\gamma) = G(\gamma) = (n+1, p, m_{2j-1}^{-1}, S^*b^*) = (n+1, p, m_{2j-1}^{-1}, e_{F(\eta)}^*) \in \Gamma$ as required.

When $\age\gamma > 1$, we can write $\gamma = (n+1, \beta, \xi, b^*)$ where (in particular) $\supp b^* \subseteq \Gamma_n$ and $\xi \in \Gamma_p$ ($p<n)$. If this $\gamma$ is of even weight, it follows easily from the inductive hyptohesis and arguments similar to the `age 1' case that $G(\gamma) = F(\gamma) \in \Gamma$ (when $G(\gamma)$ is defined). So we consider only the case when $\weight \gamma = \beta = m_{2j-1}^{-1}$ (for some $j$) and $G(\gamma)$ is defined. If $G(\xi) = F(\xi)$ is undefined, then since $G(\gamma)$ is defined, we must have $G(\gamma) = (n+1, \rank\xi, m_{2j-1}^{-1}, S^*b^*)$ and $P^*_{(\rank\xi, \infty)} S^*b^* \neq 0$. So in particular, $b^* \neq 0$ and $S^*b^* \neq 0$. Again by the restrictions on elements of odd weight, we must have $b^* = e_{\eta}^*$ where $\weight\eta = m_{4k}^{-1} < n_{2j-1}^{-2}$, some $k \in \Sigma(\xi)$. Since $S^*b^* \neq 0$, we must have $G(\eta) = F(\eta)$ defined and $S^*b^* = S^*e_{\eta}^* = e_{F(\eta)}^*$. Since $G$ preserves rank, we see as a consequence of the inductive hypothesis, that $\rank F(\eta) \in \Gamma_n \setminus \Gamma_{\rank\xi}$. Furthermore, $\weight F(\eta) = \weight \eta = m_{4k}^{-1} < n_{2j-1}^{-2}$. We conclude that $G(\gamma) = F(\gamma) \in \Gamma$ as required.

In the case where $F(\xi)$ is defined, $G(\gamma) = (n+1, F(\xi), m_{2j-1}^{-1}, S^*b^*)$. If $S^*b^* = 0$ then by the inductive hypothesis, we certainly have $G(\gamma) = F(\gamma) \in \Gamma$ and we are done. Otherwise, we again must have $b^* = e_{\eta}^*$ where $\weight\eta = m_{4k}^{-1} < n_{2j-1}^{-2}$, some $k \in \Sigma(\xi)$ and $S^*b^* = e_{F(\eta)}^*$. Now, $\weight F(\eta) = \weight\eta = m_{4k}^{-1} < n_{2j-1}^{-2}$ and $k \in \Sigma(\xi) \subseteq \Sigma (G(\xi))$ by lemma \ref{SigmaGammaContainedinSigmaGGamma}. 
\end{proof}

Later, we will need the following lemma about the elements of odd weight in $\Gamma$. 
\begin{lem} \label{monotonicity_of_odd_weights}
Let $\gamma \in \Gamma$ be an element of odd weight and $\age\gamma > 1$. Let $\big( p_0, (p_i , \xi_i, b_i^* ) \big)$ be the analysis of $\gamma$, where we know each $b_i^*$ is either $0$ or $e_{\eta_i}^*$ for some suitable $\eta_i$. If there are $i, j$, $1 \leq i < j \leq \age\gamma := a$ with $b_i^* = e_{\eta_i}^*$ and $b_j^* = e_{\eta_j}^*$ then $\weight\eta_j \lneq \weight\eta_i$.
\end{lem}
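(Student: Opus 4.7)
The plan is to reduce the claim to a comparison of the indices $l_r$ defined by $\weight \eta_r = m_{4l_r}^{-1}$. Write $\weight \gamma = m_{2s-1}^{-1}$. Since $(m_r)$ is strictly increasing, proving $\weight \eta_j < \weight \eta_i$ is equivalent to proving $l_j > l_i$. Inspecting Definition \ref{DefnOfGammaAndSpace}, for $r \ge 2$ the restriction on $b_r^* = e_{\eta_r}^*$ forces $l_r \in \Sigma(\xi_{r-1})$, while for $r = 1$ no such $\Sigma$-constraint is imposed on $l_1$; this asymmetry forces the argument to split into the cases $i \ge 2$ and $i = 1$.

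In the case $i \ge 2$, both $l_i \in \Sigma(\xi_{i-1})$ and $l_j \in \Sigma(\xi_{j-1})$, with $\rank \xi_{i-1} = p_{i-1} < p_{j-1} = \rank \xi_{j-1}$. Lemma \ref{MonotonicityOfSigmaSets} then immediately gives $\max \Sigma(\xi_{i-1}) < \min \Sigma(\xi_{j-1})$, and hence $l_i < l_j$, which is exactly what is needed.

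The case $i = 1$ is the main (mild) obstacle, since it cannot be handled purely by Lemma \ref{MonotonicityOfSigmaSets}; it instead requires combining two separate facts. On one hand, $\eta_1 \in \Gamma_{p_1 - 1}$, and Assumption \ref{DeltaUpperAssump} caps the weight index of any element of $\Delta_{m+1}$ by $m+1$, so together these force $4 l_1 \le \rank \eta_1 \le p_1 - 1$. On the other hand, $l_j$ is of the form $\sigma(\theta)$ for some $\theta \in \Gammamax$ with $\rank \theta = p_{j-1}$ (using that $G$ preserves rank), so property (2) of $\sigma$ yields $l_j = \sigma(\theta) > \rank \theta = p_{j-1} \ge p_1 > 4 l_1 \ge l_1$. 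Combining the two cases shows $l_j > l_i$ in every situation, and the desired inequality $\weight \eta_j < \weight \eta_i$ then follows from the monotonicity of $(m_r)$.
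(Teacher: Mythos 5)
Your proof is correct and follows essentially the same route as the paper: the case $i\ge 2$ is handled by Lemma \ref{MonotonicityOfSigmaSets} applied to $\Sigma(\xi_{i-1})$ and $\Sigma(\xi_{j-1})$, and the case $i=1$ by combining the restriction that an element's weight index is at most its rank (so $4l_1\le\rank\eta_1<p_1$) with rank-preservation of $G$ and property (2) of $\sigma$ (so every member of $\Sigma(\xi_{j-1})$ exceeds $p_{j-1}\ge p_1$). The only differences from the paper's argument are cosmetic (you compare the indices $l_r$ directly rather than the weights $m_{4l_r}^{-1}$).
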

\begin{proof}
We consider the cases when $i = 1$ and $i > 1$ separately. Suppose $b_1^* = e_{\eta_1}^*$ where $\weight\eta_1 = m_{4i}^{-1}$. By construction of $\Gamma^{\text{max}}$, elements of rank $p$ are only allowed to have weights $m_j^{-1}$ where $1 \leq j \leq p$. So $\weight\eta_1 = m_{4i}^{-1} \implies 4i \leq \rank\eta_1 < \rank\xi_1 = p_1$. By the strict monotonicity of the sequence $m_j$, we get that $m_{\rank\xi_1}^{-1} < m_{4i}^{-1} = \weight\eta_1$.

Now for any $j > 1$, if $b_j^* \neq 0$, $b_j^* = e_{\eta_j}^*$ where $\weight\eta_j = m_{4k}^{-1}$ some $k \in \Sigma (\xi_{j-1})$. Since the mapping $F$ preserves rank of elements, and $\sigma(\theta) > \rank\theta$ $\forall \theta \in \Gamma$ (assumption (2) of the $\sigma$ mapping), it is immediate from the definition of $\Sigma(\xi_{j-1})$ that $k \in \Sigma(\xi_{j-1}) \implies k > \rank\xi_{j-1} \geq \rank\xi_1$. Now \[
\weight\eta_j = m_{4k}^{-1} < m_{4\rank\xi_{j-1}}^{-1} \leq m_{4\rank\xi_1}^{-1} \leq \weight\eta_1
\]
thus concluding the proof for the case $i=1$. The case when $i > 1$ is easy. In this case we suppose $b_i^* = e_{\eta_i}^* $, $\weight\eta_i = m_{4l}^{-1}$ for some $l \in \Sigma(\xi_{i-1})$. Now $j > i$, so (by lemma \ref{MonotonicityOfSigmaSets}) $\Sigma (\xi_{j-1}) > \Sigma (\xi_{i-1})$ so that $l < k$ and therefore $\weight\eta_j = m_{4k}^{-1} < m_{4l}^{-1} = \weight\eta_i$, completing the proof.
\end{proof}

\section{Rapidly Increasing Sequences and the operator $S\colon\X\to\X$}
We recall from \cite{AH} that special classes of block sequences, namely the \emph{rapidly increasing sequences} admit good upper estimates. This class of block sequences will also be useful in our construction. We recall the definition:
\begin{defn}\label{RISDef}
Let $I$ be an interval in $\N$ and let $(x_k)_{k\in I}$ be a
block sequence (with respect to the FDD $(M_n)$). We say that
$(x_k)$ is a {\em rapidly increasing sequence}, or RIS, if there
exists a constant $C$ such that the following hold:
\begin{enumerate}
\item
$\|x_k\|\le C$ for all $k\in \N$,
\end{enumerate} and there is an increasing sequence $(j_k)$ such
that, for all $k$,
\begin{enumerate}\setcounter{enumi}{1}
\item $j_{k+1} > \max\, \text{ran }\,x_k$\,,
\item $|x_k(\gamma)| \le Cm_i^{-1}$ whenever $\weight\gamma = m_i^{-1}$ and
$i<j_k$\,.
\end{enumerate}
If we need to be specific about the constant, we shall refer to a
sequence satisfying the above conditions as a $C$-RIS.
\end{defn}

\begin{rem} \label{SofRISisRIS}
We make the following important observation. If $(x_i)_{i\in\N}$ is a $C$-RIS, then so also is the sequence $(Sx_i)$. We omit the very easy proof.
\end{rem}

We also note that the estimates of the lemmas and propositions 5.2 - 5.6 and 5.8 of \cite{AH} all still hold. The same proofs go through, with only minor modifications to take account of the fact that $p_0$ doesn't need to be $0$ in the evaluation analysis of an element $\gamma$ in our $\Gamma$ (see proposition \ref{EvalAnal}). For convenience, we state proposition 5.6 of \cite{AH} as we shall be making use of it in this paper:- 

\begin{prop}\label{AHProp5.6}
Let $(x_k)_{k=1}^{n_{j_0}}$ be a $C$-RIS.  Then
\begin{enumerate}
\item
For every $\gamma\in \Gamma$ with $\weight \gamma=m_h^{-1}$ we have
$$
|n_{j_0}^{-1}\sum_{k=1}^{n_{j_0}} x_k(\gamma)|\le \begin{cases}
16Cm_{j_0}^{-1}m_h^{-1} &\text{ if }h<j_0\\
4Cn_{j_0}^{-1} + 6Cm_h^{-1} &\text{ if } h\ge j_0 \end{cases}
$$
In particular,
$$
|n_{j_0}^{-1}\sum_{k=1}^{n_{j_0}} x_k(\gamma)|\le 10Cm_{j_0}^{-2},
$$
if $h>j_0$ and
$$
 \|n_{j_0}^{-1}\sum_{k=1}^{n_{j_0}}x_k\|\le 10Cm_{j_0}^{-1}.
$$
\item
If $\lambda_k$ ($1\le k\le n_{j_0}$) are scalars with
$|\lambda_k|\le 1$ and having the  property that
$$
|\sum_{k\in J} \lambda_kx_k(\gamma)|\le C\max_{k\in J}|\lambda_k|,
$$
for every $\gamma$ of weight $m_{j_0}^{-1}$ and every interval
$J\subseteq\{1,2,\dots,n_{j_0}\}$, then
$$
 \|n_{j_0}^{-1}\sum_{k=1}^{j_0}\lambda_kx_k\|\le 10Cm_{j_0}^{-2}.
$$
\end{enumerate}
\end{prop}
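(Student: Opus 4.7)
The plan is to follow the proof of Proposition 5.6 of \cite{AH} essentially verbatim, using the evaluation analysis (Proposition \ref{EvalAnal}) of a generic $\gamma\in\Gamma$ of weight $m_h^{-1}$. For such a $\gamma$ with analysis $\big(p_0,(p_r,\xi_r,b_r^*)_{1\le r\le a}\big)$, one has $e_\gamma^* = \sum_{r=1}^{a} d_{\xi_r}^* + m_h^{-1}\sum_{r=1}^{a} P^*_{(p_{r-1},p_r)} b_r^*$, so that $x_k(\gamma)$ decomposes as a sum of contributions from the ``windows'' $(p_{r-1},p_r)$. The norm bounds on the biorthogonal coefficients and on the projections coming from Theorem \ref{BDThm}, together with $\|b_r^*\|_{\ell_1}\le 1$, allow one to estimate $x_k(\gamma)$ window by window.

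For part (1) I would split on whether $h\ge j_0$ or $h<j_0$. In the case $h\ge j_0$, group the RIS vectors according to which window $(p_{r-1},p_r]$ the range $\ran x_k$ meets; since the ranges are disjoint (block sequence) and $a\le n_h\le n_{j_0}$, at most $2a$ of the $n_{j_0}$ vectors straddle a window boundary, and each such vector contributes at most $C$. Dividing by $n_{j_0}$ produces the term $4Cn_{j_0}^{-1}$. The remaining vectors sit inside a single window, and their contribution, using the external factor $m_h^{-1}$ together with $\|b_r^*\|_{\ell_1}\le 1$, totals at most $6Cm_h^{-1}$. In the case $h<j_0$, RIS condition (3) applies directly to give $|x_k(\gamma)|\le Cm_h^{-1}$ for every $k$ with $j_k>h$; averaging over $n_{j_0}$ terms and absorbing the handful of exceptional initial indices via Assumption \ref{mnAssump}(iv) yields $16Cm_{j_0}^{-1}m_h^{-1}$. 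The ``in particular'' estimates then follow by comparing the two regimes and invoking $m_{j+1}\ge m_j^2$.

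Part (2) follows the identical template: in the $h\ge j_0$ case, the straddling-boundary estimate is replaced by an application of the hypothesis $|\sum_{k\in J}\lambda_k x_k(\gamma)|\le C\max_{k\in J}|\lambda_k|$, which controls the contribution of any block directly at weight $m_{j_0}^{-1}$, bypassing the need to estimate each $\lambda_k x_k(\gamma)$ individually. The only genuine obstacle — which is why the author signals ``minor modifications'' — is to verify that allowing $p_0>0$ in the analysis causes no harm: since every estimate uses only the disjointness of the intervals $(p_{r-1},p_r)$ and the bound $a\le n_h$ on their number, the initial value $p_0$ plays no role, and the proofs of \cite{AH} transfer unchanged.
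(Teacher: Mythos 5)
Your meta-strategy --- transfer the Argyros--Haydon proof and note that allowing $p_0>0$ in the evaluation analysis is harmless --- is exactly what the paper does, and that closing observation is correct. But the sketch you give of how the bounds are actually produced would not deliver them, in either case of part (1). For $h\ge j_0$ you claim that at most $2a$ of the vectors straddle a window boundary, that $a\le n_h\le n_{j_0}$, and that charging $C$ to each straddler and dividing by $n_{j_0}$ yields $4Cn_{j_0}^{-1}$. This is backwards: for $h\ge j_0$ one has $n_h\ge n_{j_0}$, so the age of $\gamma$, and hence the number of vectors meeting a boundary, can be as large as $n_{j_0}$ itself, and charging $C$ per straddler gives a contribution of order $C$, not $Cn_{j_0}^{-1}$. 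The term $4Cn_{j_0}^{-1}$ really comes from RIS condition (2): the elements $\xi_r$ in the analysis of $\gamma$ have weight $m_h^{-1}$ and hence rank $p_r\ge h$, so as soon as some $\ran x_k$ contains a $p_r$ one gets $j_{k+1}>\max\ran x_k\ge h$, and every later vector obeys $|x_{k'}(\xi_r)|\le Cm_h^{-1}$; using $e^*_\gamma=e^*_{\xi_t}+\sum_{r>t}d^*_{\xi_r}+m_h^{-1}\sum_{r>t}P^*_{(p_{r-1},\infty)}b^*_r$ with $t=t(k)$ adapted to $x_k$, all but one or two exceptional $k$ then contribute $O(Cm_h^{-1})$, and only those exceptional $k$ contribute $O(C)$.

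The gap in the case $h<j_0$ is more serious. Averaging the termwise bound $|x_k(\gamma)|\le Cm_h^{-1}$ coming from RIS condition (3) can only give $Cm_h^{-1}$; it cannot produce the factor $m_{j_0}^{-1}$, and without that factor the key consequence $\|n_{j_0}^{-1}\sum_k x_k\|\le 10Cm_{j_0}^{-1}$ (on which the whole paper leans) is lost. Likewise part (2) is not obtained by applying part (1) to the $C$-RIS $(\lambda_kx_k)$: for $h<j_0$ part (1) only gives $16Cm_{j_0}^{-1}m_h^{-1}$, which is much larger than $10Cm_{j_0}^{-2}$. The extra factor has to be extracted recursively: for vectors lying inside a window one has $x_k(\gamma)=m_h^{-1}\langle b^*_r,x_k\rangle$ with $b^*_r$ an $\ell_1$-convex combination of evaluation functionals of lower rank, and one iterates the evaluation analysis through these, gaining a weight factor at each step until either a weight of index $\ge j_0$ appears or the accumulated product of weights falls below $m_{j_0}^{-1}$ (after roughly $\log_2 m_{j_0}$ steps); the boundary terms generated along the way are precisely what Assumption \ref{mnAssump}(iv), $n_{j+1}\ge(16n_j)^{\log_2 m_{j+1}}$, is there to control --- not the ``handful of exceptional initial indices'' you invoke it for --- and in part (2) the hypothesis on weight-$m_{j_0}^{-1}$ elements enters at the bottom of this same recursion. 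So the headline claim (the AH argument transfers verbatim and $p_0$ plays no role) is right and matches the paper, but the proof as you sketch it would not establish the stated estimates.
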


Another result of particular importance to us will be the following proposition of \cite{AH}:

\begin{prop}\label{RIStoBlock}
Let $Y$ be any Banach space and $T:\X_k \to Y$ be a bounded
linear operator.  If $\|T(x_k)\|\to0$ for every RIS $(x_k)_{k\in
\N}$ in $\X_k$  then $\|T(x_k)\|\to 0$ for every bounded
block sequence sequence in $\X_k$.
\end{prop}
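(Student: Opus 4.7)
\emph{Plan.} I would argue by contradiction. Suppose that some bounded block sequence $(y_k)$ in $\X_k$ satisfies $\|Ty_k\|\not\to 0$; after extracting a subsequence and rescaling, we may assume $\|y_k\|\le 1$ and $\|Ty_k\|\ge\delta$ for some $\delta>0$. The aim is to manufacture from $(y_k)$ a RIS $(x_n)$ with $\|Tx_n\|\not\to 0$, directly contradicting the hypothesis.

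\emph{Weak nullity and the averages.} Since $\X_k^*=\ell_1$ by property (1), Theorem~\ref{BDThm} tells us that the FDD $(M_n)$ is shrinking, hence every bounded block sequence in $\X_k$ is weakly null; in particular $y_k(\gamma)=e^*_\gamma(y_k)\to 0$ for each fixed $\gamma\in\Gamma$. A standard diagonal extraction then thins $(y_k)$ so that these evaluations decay on each finite set $\Gamma_N$ as fast as we wish. Following the Argyros--Haydon template, I would then build $(x_n)$ as $\ell_1^{n_{j_n}}$-averages: inductively pick a strictly increasing sequence $(j_n)$ and disjoint index sets $I_n\subset\N$ of cardinality $n_{j_n}$ with $\min I_n$ exceeding every rank appearing in $x_{n-1}$ and $j_n>\max\ran x_{n-1}$, and set $x_n=n_{j_n}^{-1}\sum_{i\in I_n}y_{k_i}$. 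Conditions (1) and (2) of Definition~\ref{RISDef} are immediate; for condition (3), namely $|x_n(\gamma)|\le Cm_i^{-1}$ whenever $\weight\gamma=m_i^{-1}$ with $i<j_n$, one applies the evaluation analysis of Proposition~\ref{EvalAnal} to $\gamma$ and splits the resulting sum between low-rank contributions (controlled by the diagonal extraction) and high-rank contributions (controlled by the AH-style averaging estimates, for which Proposition~\ref{AHProp5.6} is the prototype). This yields a $C$-RIS $(x_n)$.

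\emph{Main obstacle.} The heart of the argument is to ensure that $\|Tx_n\|\not\to 0$; naive averaging can destroy the lower bound $\|Ty_k\|\ge\delta$ since $Y$ has no prescribed $\ell_1$-type structure. To control this, one runs a parallel construction on the dual side: choose $\phi_k\in\ball Y^*$ with $\phi_k(Ty_k)\ge\delta/2$ and set $f_k=T^*\phi_k\in\ell_1(\Gamma)=\X_k^*$; then $\|f_k\|\le\|T\|$ and $f_k(y_k)\ge\delta/2$. Because $\ell_1$ has the Schur property, a norm-convergent subsequence $f_{k_n}\to f$ is precluded: it would give simultaneously $f(y_{k_n})\to 0$ (from weak nullity of $(y_k)$) and $f(y_{k_n})\ge\delta/2+o(1)$ (from continuity of the pairing), an immediate contradiction. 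Rosenthal's theorem therefore forces $(f_k)$ to have an $\ell_1$-equivalent subsequence, which after a small perturbation is a block sequence in $\ell_1(\Gamma)$ whose supports can be arranged to be compatible with the ranges of the corresponding $y_{k_i}$. Carrying out the averaging construction of $(x_n)$ along this matched blocking then allows one to exhibit, for each $n$, a dual functional $\psi_n\in Y^*$ of uniformly bounded norm with $\psi_n(Tx_n)\ge c>0$, hence $\|Tx_n\|\ge c$. This coupling of the primal averaging with the $\ell_1$-block structure of $(f_k)$ in $\ell_1(\Gamma)$ is the principal technical step; once established, it yields the desired contradiction and completes the proof.
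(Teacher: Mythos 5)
There is a genuine gap, and it sits exactly where you placed your ``main obstacle''. First, a structural problem: you invoke $\X_k^*=\ell_1$ (and hence shrinkingness of the FDD and weak nullity of block sequences) as an ingredient, but in this paper the identification $\X_k^*=\ell_1(\Gamma)$ is Proposition \ref{Xhasl1Dual}, which is stated \emph{after} and deduced \emph{from} Proposition \ref{RIStoBlock} (exactly as in \cite{AH}); Theorem \ref{BDThm} only gives the implication ``shrinking FDD $\Rightarrow$ dual is $\ell_1$'', not the converse you use. So both your weak-nullity step and your later step ruling out norm-convergent subsequences of $f_k=T^*\phi_k$ are circular at this point of the development. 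A smaller issue of the same kind: Proposition \ref{AHProp5.6} cannot be cited to verify RIS condition (3) for your averages, since its hypothesis is that the averaged sequence is already a $C$-RIS; the flatness of plain $n_{j}^{-1}$-averages of a bounded block sequence is true, but it needs a direct counting argument via Proposition \ref{EvalAnal} (at a low-weight $\gamma$ of age $a\le n_h$, at most $a$ of the $y_k$ straddle the ranks $p_r$, the rest contribute at most $m_h^{-1}\|y_k\|$, and $n_h/n_{j}\le m_h^{-1}$ by Assumption \ref{mnAssump}), not the diagonal extraction you describe, which only controls finitely many coordinates and says nothing about low-weight elements of large rank.

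The decisive gap is the lower bound $\|Tx_n\|\ge c$. Your dual-side construction does not deliver it: disjointifying the $e^*$-supports of the $f_k$ in $\ell_1(\Gamma)$ does not control the cross pairings $f_{k_i}(y_{k_{i'}})$ (supports in $\Gamma$ and FDD-ranges are different notions --- one would have to block with respect to the $d^*_\gamma$ basis, and even then the blocked functionals are no longer of the form $T^*\psi$, so they cannot be used to norm $Tx_n$); and any genuine functional $\psi\in\ball Y^*$ at your disposal is matched to only boundedly many of the $y_{k_i}$, so after averaging over $n_{j_n}$ terms it certifies only $\psi(Tx_n)\ge \delta/n_{j_n}+(\text{uncontrolled cross terms})$. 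The Rosenthal/$\ell_1$-subsequence of $(f_k)$ gives lower estimates for norms of combinations of the $f_k$ in $\X_k^*$, which is simply not the quantity needed. In fact the overall plan --- building the contradicting RIS as long $\ell_1^{n_j}$-averages of the given block sequence --- cannot work for arbitrary $Y$: if, say, the vectors $Ty_k$ behaved like $\delta$ times an orthonormal sequence in a Hilbert space, then \emph{every} average of $n$ of the $y_k$ has image of norm $\delta/\sqrt{n}$, so no choice of indices or matched blocking retains a lower bound; a proof must exclude such behaviour using the internal structure of $\X_k$, and your argument never does. This is precisely the point handled by the argument of \cite{AH} which the paper imports verbatim (and which, in particular, makes no use of $\ell_1$-duality, Rosenthal's theorem or the Schur property); your proposal identifies the difficulty correctly but asserts rather than proves the key step, so it does not constitute a proof.
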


Again, the same proof as in \cite{AH} goes through. Consequently, we also get (by the same arguments as in \cite{AH})
\begin{prop}\label{Xhasl1Dual}
The dual of $\X_k$ is $\ell_1(\Gamma)$. More precisely the map \[ 
\varphi \colon \ell_{1}(\Gamma) \to \X_k^*
\]
defined by
\[
 \varphi(x^*)x := \langle x , x^* \rangle
 \]
(where $x \in \X_k \subseteq \ell_{\infty}(\Gamma) = \ell_{1}^*(\Gamma)$, $x^* \in \ell_1(\Gamma)$) is an isomorphism.
\end{prop}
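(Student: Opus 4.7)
The plan is to reduce the statement to showing that the FDD $(M_n)$ of $\X_k$ is shrinking, and then invoke the concluding sentence of Theorem \ref{BDThm}. First, observe that $\varphi$ is manifestly well-defined and bounded with $\|\varphi\|\le 1$: for $x\in\X_k\subseteq\ell_\infty(\Gamma)$ and $x^*\in\ell_1(\Gamma)$ we have $|\langle x,x^*\rangle|\le\|x\|_\infty\|x^*\|_1$. Once the FDD is shown to be shrinking, Theorem \ref{BDThm} delivers that $\varphi$ is the claimed natural isomorphism onto $\X_k^*$.

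To prove that the FDD is shrinking I will use the standard characterization: $(M_n)$ is shrinking if and only if $x^*(x_k)\to 0$ whenever $(x_k)$ is a bounded block sequence with respect to $(M_n)$ and $x^*\in \X_k^*$. Regarding such an $x^*$ as a bounded linear operator $T\colon\X_k\to\R$, Proposition \ref{RIStoBlock} reduces the task to verifying that $x^*(x_k)\to 0$ for every RIS $(x_k)$ in $\X_k$.

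The final step is a contradiction argument. Suppose $(x_k)$ is a $C$-RIS and $x^*\in\X_k^*$ with $x^*(x_k)\not\to 0$. Passing to a subsequence I may assume $|x^*(x_k)|\ge\delta>0$ with a constant sign, and then replacing each $x_k$ by $\pm x_k$ (which clearly preserves all three defining conditions of a RIS, since ranges and the pointwise estimates $|x_k(\gamma)|$ are unaffected by sign) I may assume $x^*(x_k)\ge\delta$ for all $k$. For each $j_0\in\N$, applying Proposition \ref{AHProp5.6}(1) to the first $n_{j_0}$ terms yields
\[
\delta\le\frac{1}{n_{j_0}}\sum_{k=1}^{n_{j_0}}x^*(x_k)=x^*\Big(\frac{1}{n_{j_0}}\sum_{k=1}^{n_{j_0}}x_k\Big)\le 10C\|x^*\|m_{j_0}^{-1}.
\]
Since $m_{j_0}\to\infty$, letting $j_0\to\infty$ gives $\delta\le 0$, the desired contradiction. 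There is no substantive obstacle here: the excerpt has already asserted that Propositions \ref{RIStoBlock} and \ref{AHProp5.6} carry over verbatim from \cite{AH} to our modified construction, and the result follows by direct application of these tools in the form just described.
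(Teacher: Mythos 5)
Your proposal is correct and follows essentially the same route as the paper, which simply invokes the Argyros--Haydon argument: reduce to showing the FDD is shrinking, use Proposition~\ref{RIStoBlock} to pass from bounded block sequences to RIS, and kill any functional bounded away from zero on a RIS via the averaging estimate of Proposition~\ref{AHProp5.6}, then conclude by the final clause of Theorem~\ref{BDThm}. Your extra checks (subsequences and sign changes preserve the RIS conditions, boundedness of $\varphi$) are fine and consistent with what the paper leaves implicit.
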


This canonical identification of $\X_k^*$ with $\ell_1(\Gamma)$ allows us to prove some important properties of the operator $S\colon \X_k\to\X_k$. 

\begin{lem} \label{imSClosed} 
The dual of the operator $S: \X_k \to \X_k$ is precisely the operator $S^* : \ell_1(\Gamma) \to \ell_1(\Gamma)$ under the canonical identification, $\varphi$, of $\X_k^*$ with $\ell_1(\Gamma)$. Moreover, $S^j$ has closed range for every $1\leq j \leq k-1$, and consequently, $S^j: \X_k \to \im S^j$ is a quotient operator.
\end{lem}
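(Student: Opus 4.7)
My plan is to deduce both assertions from the closed range theorem once I have proved the first. For the first claim, I will just unwind definitions. Proposition \ref{S^*construction} defines $S$ as the restriction to $\X_k$ of the Banach-space dual $(S^*)' \colon \ell_\infty(\Gamma) \to \ell_\infty(\Gamma)$. Hence, for $x \in \X_k$ and $x^* \in \ell_1(\Gamma)$, the adjoint identity $\langle Sx, x^* \rangle = \langle x, S^*x^* \rangle$ holds automatically. Translating this through the isomorphism $\varphi \colon \ell_1(\Gamma) \to \X_k^*$ of Proposition \ref{Xhasl1Dual} gives $S' \varphi(x^*) = \varphi(S^* x^*)$, so $\varphi^{-1} \circ S' \circ \varphi = S^*$, and therefore $\varphi^{-1} \circ (S^j)' \circ \varphi = (S^*)^j$ for all $j$.

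Next, to see that $S^j$ has closed range, I will invoke the closed range theorem and instead check that $(S^*)^j$ has closed range in $\ell_1(\Gamma)$. Iterating the formula from Proposition \ref{S^*construction}, I get $(S^*)^j e^*_\gamma = e^*_{F^j(\gamma)}$ when $F^j(\gamma)$ is defined, and $0$ otherwise. I then intend to identify the image explicitly: let $F^j(\Gamma)$ denote the set of $\delta \in \Gamma$ of the form $\delta = F^j(\gamma)$ for some $\gamma \in \Gamma$. The inclusion $\im((S^*)^j) \subseteq \ell_1(F^j(\Gamma))$ is immediate from the formula. For the reverse inclusion, given $y^* = \sum_{\delta \in F^j(\Gamma)} b_\delta e^*_\delta \in \ell_1(F^j(\Gamma))$, I will select for each $\delta \in F^j(\Gamma)$ a single preimage $\gamma_\delta \in F^{-j}(\delta)$ and set $x^* = \sum_\delta b_\delta e^*_{\gamma_\delta}$; then $x^* \in \ell_1(\Gamma)$ with $\|x^*\|_1 = \|y^*\|_1$ and $(S^*)^j x^* = y^*$. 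Since $\ell_1(F^j(\Gamma))$ is plainly closed in $\ell_1(\Gamma)$ (it is the kernel of the coordinate projection onto $\Gamma \setminus F^j(\Gamma)$), this establishes that $(S^*)^j$ has closed range.

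Finally, once $\im(S^j)$ is known to be closed in $\X_k$, it inherits a Banach-space structure from $\X_k$, and $S^j \colon \X_k \to \im(S^j)$ is a bounded linear surjection between Banach spaces; the open mapping theorem then makes it a quotient operator in the usual sense. The only non-formal ingredient is the explicit identification $\im((S^*)^j) = \ell_1(F^j(\Gamma))$, which reduces to the observation that one can always pick a single preimage under $F^j$ for each element of $F^j(\Gamma)$ without blowing up the $\ell_1$ norm. Everything else is a formal consequence of the closed range theorem, the open mapping theorem, and the adjointness established in the first paragraph.
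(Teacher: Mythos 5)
Your proposal is correct and follows essentially the same route as the paper: identify the dual of $S^j$ with $(S^*)^j$ via $\varphi$, invoke the closed range theorem, recognise $\im (S^*)^j$ as $\ell_1$ of the image of $F^j$, and finish with the open mapping theorem. The only difference is cosmetic: you verify the adjointness directly from $S=(S^*)'|_{\X_k}$ rather than on the basis vectors $e^*_\gamma$, and you spell out the preimage-selection argument that the paper dismisses as ``easily seen,'' which is a welcome addition rather than a deviation.
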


\begin{proof}
We will temporarily denote the dual map of $S$ by $S' : \X_k^*  \to \X_k^*$ so as to not confuse it with the $S^*$ mapping on $\ell_{1}(\Gamma)$. By continuity and linearity, the maps $\varphi ^{-1} S' \varphi$ and $S^*$ are completely determined by their action on the vectors $e_{\gamma}^*$ for $\gamma \in \Gamma$. For $x \in \X_k$, \[
\left( S' \varphi (e_{\gamma}^*)\right) (x) = \varphi (e_{\gamma}^*) Sx = \langle e_{\gamma}^* , Sx \rangle = \langle S^*e_{\gamma}^* , x \rangle = \begin{cases} x(F(\gamma) & \text{ if $F(\gamma)$ is defined} \\ 0 & \text{ otherwise} \end{cases}
\]
It follows from this that 
\begin{align*}
\varphi^{-1} S' \varphi (e_{\gamma}^*) &= \begin{cases} e_{F(\gamma)}^* & \text{ if $F(\gamma)$ is defined} \\ 0 & \text{ otherwise} \end{cases} \\[2pt]
&= S^*(e_{\gamma}^*)
\end{align*}
as required.

We recall now Banach's Closed Range Theorem. A particular consequence of this result is that a bounded linear operator has closed range if and only if its dual operator has closed range. So to see that the image of $S^j$ is closed, it will be enough to see that the image of $(S^*)^j$ is closed. But it is easily seen that the image of $(S^*)^j$ is just $\ell_1(\Gamma \cap \im F^j) \subseteq \ell_{1}(\Gamma)$ (with the obvious embedding), so certainly $(S^*)^j$ has closed image. 

Since $\im S^j$ is closed, it follows immediately that $S^j \colon \X_k \to \im S^j$ is a quotient operator.

\end{proof}

\begin{cor} \label{corSisQuotientOp}
For $1 \leq j \leq k-1$there are $a_j, b_j \in \R$, $a_j, b_j > 0$ such that whenever $x \in \X_k$, \[
a_j\|S^jx\| \leq \dist (x , \Ker S^j ) \leq b_j \| S^jx \|
\]
\end{cor}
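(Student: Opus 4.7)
The plan is to read the corollary as the standard quantitative characterisation of a quotient operator and deduce both inequalities from the factorisation of $S^j$ through $\X_k/\Ker S^j$, using Lemma \ref{imSClosed} to know that the range is closed.

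First I would dispose of the easy inequality $a_j \|S^j x\| \leq \dist(x, \Ker S^j)$. For any $y \in \Ker S^j$ we have $S^j x = S^j(x-y)$, so $\|S^j x\| \leq \|S^j\|\,\|x-y\|$; taking the infimum over $y \in \Ker S^j$ gives $\|S^j x\| \leq \|S^j\|\dist(x,\Ker S^j)$. Since $\|S\|\leq 1$ by Proposition \ref{S^*construction}, we have $\|S^j\|\leq 1$, so the inequality holds with $a_j = 1$ (or more generally $a_j = \|S^j\|^{-1}$ if $S^j$ is nonzero, which it is for $1 \leq j \leq k-1$).

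For the non-trivial direction I would invoke the factorisation $S^j = \widetilde{S^j} \circ \pi$, where $\pi \colon \X_k \to \X_k/\Ker S^j$ is the canonical quotient map (so $\|\pi(x)\| = \dist(x,\Ker S^j)$ by definition of the quotient norm) and $\widetilde{S^j} \colon \X_k/\Ker S^j \to \im S^j$ is the induced continuous linear bijection. By Lemma \ref{imSClosed} the space $\im S^j$ is closed in $\X_k$ and hence is itself a Banach space; consequently $\widetilde{S^j}$ is a bounded bijection between Banach spaces, and the Open Mapping Theorem yields a bounded inverse $(\widetilde{S^j})^{-1}$. Setting $b_j := \|(\widetilde{S^j})^{-1}\|$, we then have for every $x \in \X_k$
\[
\dist(x,\Ker S^j) \;=\; \|\pi(x)\| \;=\; \bigl\|(\widetilde{S^j})^{-1}(S^j x)\bigr\| \;\leq\; b_j\,\|S^j x\|,
\]
which is the required upper estimate.

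There is no real obstacle here; the only thing to be careful about is ensuring that the relevant spaces are Banach (so that the Open Mapping Theorem applies), and this is exactly the content of Lemma \ref{imSClosed}. The corollary is essentially just recording the quantitative consequence of ``$S^j$ is a quotient operator''.
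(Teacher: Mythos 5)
Your proposal is correct and is exactly the paper's argument: the paper derives the corollary as an immediate consequence of Lemma \ref{imSClosed} (closed range, hence $S^j \colon \X_k \to \im S^j$ is a quotient operator), and your write-up simply spells out what that quotient-operator statement means via the factorisation through $\X_k/\Ker S^j$ and the Open Mapping Theorem.
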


\begin{proof}
This is immediate from the fact that $S^j: \X \to \im S^j$ is a quotient operator.
\end{proof}

\begin{cor}
Suppose $\lambda_i \in \R$ ($0\leq i \leq k-1$) are such that $\sum_{i=0}^{k-1} \lambda_i S^i$ is compact. Then $\lambda_i = 0$ for every $i$. Consequently, there does not exist  $j$ ($1\leq j \leq k-1$) such that $S^j$ is a compact perturbation of some linear combination of the operators $S^l, l \neq j$, and $\{ I, S^j : 1\leq j\leq k-1\} $ is a linearly independent set in $\mathcal{L}(\X_k)/ \mathcal{K}(\X_k)$.
\end{cor}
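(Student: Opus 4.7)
The plan is a proof by contradiction, exploiting $S^k=0$ to reduce everything to the non-compactness of a single operator, $S^{k-1}$.

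First I would assume that some $\lambda_i$ is nonzero and let $j_0$ be the smallest such index. Composing $K:=\sum_{i=0}^{k-1}\lambda_i S^i$ with $S^{k-1-j_0}$, the terms with $i<j_0$ vanish by choice of $j_0$, and those with $i>j_0$ vanish because $S^m=0$ for $m\ge k$. What remains is
\[
S^{k-1-j_0}K \;=\; \lambda_{j_0}\,S^{k-1},
\]
which is compact since $K$ is. Dividing by $\lambda_{j_0}\neq 0$, this would force $S^{k-1}$ itself to be compact, so it is enough to contradict this.

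To do so, I would combine Lemma \ref{imSClosed} (closed range of $S^{k-1}$) with the standard fact that a compact operator with closed range has finite-dimensional image, and then exhibit infinitely many linearly independent vectors in $\im S^{k-1}$. The tool is the formula $S^{k-1}d_\delta=\sum_{F^{k-1}(\gamma)=\delta}d_\gamma$ from Proposition \ref{S^*construction}, together with the chain $F(j)=j-1$ on $\Delta_1=\{0,1,\dots,k-1\}$. For each $n\ge 1$ set
\[
\delta_n:=(n+1,0,m_2^{-1},e_0^*),\qquad \gamma_n:=(n+1,0,m_2^{-1},e_{k-1}^*).
\]
A routine check against Definition \ref{DefnOfGammaAndSpace} shows that these even-weight, age-one elements lie in $\Delta_{n+1}\subseteq\Gamma$ (the coordinates $0$ and $k-1$ belong to $\Delta_1\subseteq\Gamma_n$ and both $e_0^*,\,e_{k-1}^*$ lie in $B_{0,n}\cap\ell_1(\Gamma_n)$). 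Iterating $S^*e_j^*=e_{j-1}^*$ yields $F^{k-1}(\gamma_n)=\delta_n$, so $S^{k-1}d_{\delta_n}$ contains $d_{\gamma_n}$ with coefficient $1$ and is nonzero. Because $F$ preserves rank (Theorem \ref{R^*andGConstruction}), every term of $S^{k-1}d_{\delta_n}$ lies in the FDD block $M_{n+1}$; distinct $n$ therefore produce vectors with disjoint supports in the basis $(d_\gamma)$, hence linearly independent. So $\im S^{k-1}$ is infinite-dimensional, contradicting the assumption that $S^{k-1}$ is a compact operator with closed range.

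The two stated consequences follow formally. If $S^j-\sum_{l\neq j}\mu_l S^l$ were compact, then applying the main assertion with $\lambda_j=1$ and $\lambda_l=-\mu_l$ would give $1=0$. Likewise any relation $\sum_i\lambda_i[S^i]=0$ in $\mathcal L(\X_k)/\mathcal K(\X_k)$ means $\sum_i\lambda_i S^i$ is compact, forcing all $\lambda_i=0$, so $\{I,S,\dots,S^{k-1}\}$ is linearly independent in the Calkin algebra. The main obstacle in all of this is pinpointing the non-compactness of $S^{k-1}$: one needs both the closed-range information from Banach's theorem (packaged in Lemma \ref{imSClosed}) and an explicit appeal to the combinatorial description of $\Gamma$ and of $S$ on the basis, while the reduction to $S^{k-1}$ and the two consequences are purely algebraic.
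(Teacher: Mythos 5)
Your proof is correct, but it takes a genuinely different route from the paper. The paper dualizes: it notes that $\sum_{i}\lambda_i S^i$ is compact iff $\sum_i \lambda_i (S^*)^i$ is compact on $\ell_1(\Gamma)$, and then feeds in the explicit sequences $e^*_{\gamma^j_n}$ with $\gamma^j_n=(n+1,0,m_2^{-1},e_j^*)$, killing $\lambda_0,\lambda_1,\dots$ in turn because the images of these unit vectors stay $2|\lambda_j|$-separated. You instead make an algebraic reduction first: using $S^k=0$ and minimality of $j_0$, compose with $S^{k-1-j_0}$ to conclude that $\lambda_{j_0}S^{k-1}$, hence $S^{k-1}$, would be compact, and then contradict this via Lemma \ref{imSClosed} (closed range), the standard fact that a compact operator with closed range has finite rank, and the basis formula $Sd_\delta=\sum_{\gamma\in F^{-1}(\delta)}d_\gamma$ iterated to exhibit the infinite linearly independent family $S^{k-1}d_{\delta_n}\in M_{n+1}$. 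Amusingly, your witnesses $\delta_n,\gamma_n$ are exactly the paper's elements $\gamma^j_n$, but used on the $X(\Gamma)$ side rather than the $\ell_1(\Gamma)$ side. What each approach buys: the paper's argument is self-contained at the predual level (it needs only duality of compactness and the action of $S^*$ on evaluation functionals), whereas yours reduces everything to the single assertion that $S^{k-1}$ is non-compact — which is conceptually cleaner, reuses the closed-range lemma already proved, and incidentally gives a direct proof of the non-compactness of each $S^j$ claimed in the introduction — at the cost of invoking the compact-plus-closed-range-implies-finite-rank fact and the (easy, but not explicitly stated) iterate $S^{k-1}d_\delta=\sum_{F^{k-1}(\gamma)=\delta}d_\gamma$ of the formula in Proposition \ref{S^*construction}. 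All the verifications you sketch (membership of $\delta_n,\gamma_n$ in $\Delta_{n+1}$, $F^{k-1}(\gamma_n)=\delta_n$, rank preservation giving disjointly supported images) check out against Definition \ref{DefnOfGammaAndSpace} and Theorem \ref{R^*andGConstruction}, and the two "consequently" statements follow formally exactly as you say.
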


\begin{proof}
It follows by a standard result that $\sum_{i=0}^{k-1} \lambda_i S^i$ is compact if, and only if, the dual operator is compact. But the dual operator is just $T: = \sum_{i=0}^{k-1} \lambda_i (S^*)^i \colon \ell_{1}(\Gamma) \to \ell_{1}(\Gamma)$ and it is now easily seen that this is not compact unless all the $\lambda_i$ are $0$. Indeed, suppose $T$ is compact and consider first the sequence $(e_{\gamma^0_n}^*)_{n=1}^{\infty} \subseteq B_{\ell_1(\Gamma)}$ where $\gamma^0_n = (n+1, m_2^{-1}, 0, e_{0}^*) \in \Gamma$. Then, for $m\neq n$ (observing that $F(\gamma^0_n)$ is undefined for every $n$) we have \[
\| T e_{\gamma^0_n}^* - T e_{\gamma^0_m}^* \|_1 = |\lambda_0 | \| e_{\gamma^0_n}^* - e_{\gamma^0_m}^* \|_1 = 2|\lambda_0|
\]
We must therefore have that $\lambda_0 = 0$ in order that the sequence $(T e_{\gamma^0_n}^*)$ has a convergent subsequence. Then, considering in turn the sequences  $(e_{\gamma^j_n}^*)_{n=1}^{\infty} \subseteq B_{\ell_1(\Gamma)}$ where $\gamma^j_n = (n+1, m_2^{-1}, 0, e_{j}^*) \in \Gamma$ (for $1 \leq j \leq k-1$), we see by the same arguments that all the $\lambda_i$ must be $0$ as claimed.
\end{proof}

It remains to see that the operator $S$ is strictly singular and that every bounded linear operator $T \in \mathcal{L}(\X_k)$ is of the form $T = \sum_{i=0}^{k-1} \lambda_i S^i + K$ for some compact operator $K \colon \X_k \to \X_k$.To begin with, we focus on proving the latter of these and aim to prove:
\begin{thm}\label{EssMainTheorem}
Let $T \colon \X_k \to \X_k$ be a bounded linear operator on $\X_k$ and $(x_i)_{i \in \N}$ a RIS in $\X_k$. Then $\dist (Tx_i, \langle x_i, Sx_i \dots ,S^{k-1}x_i \rangle_{\R}) \to 0$ as $i \to \infty$. 
\end{thm}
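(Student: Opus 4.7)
The plan is to argue by contradiction in the spirit of the classical ``few operators'' arguments of Argyros--Haydon, but adapted so that the annihilated subspace becomes the $k$-dimensional $V_i := \langle x_i, Sx_i, \dots, S^{k-1}x_i\rangle_{\R}$. Suppose the conclusion fails, so after passing to a subsequence I may assume $\dist(Tx_i, V_i) \geq \delta > 0$ for every $i$. For each $i$, Hahn--Banach produces a norm-$1$ functional $f_i \in \ell_1(\Gamma) = \X_k^*$ with $f_i|_{V_i} = 0$ and $f_i(Tx_i) \geq \delta$. Using Lemma~\ref{imSClosed}, the annihilation condition rephrases as $((S^*)^j f_i)(x_i) = 0$ for $0 \leq j \leq k-1$, while $(T^*f_i)(x_i) \geq \delta$. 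Note that $S^k = 0$ forces the ``chain'' $f_i, S^*f_i, \dots, (S^*)^{k-1}f_i$ to terminate automatically.

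My next step is to pass to a further subsequence on which, after a gliding-hump perturbation exploiting that each of $(x_i), (Sx_i), \dots, (S^{k-1}x_i)$ is again a RIS (Remark~\ref{SofRISisRIS}), the sequence $(Tx_i)$ is arbitrarily close to a block sequence relative to the FDD $(M_n)$ and the $f_i$ have essentially disjoint supports in $\Gamma$ positioned in a block pattern compatible with $(x_i)$. The heart of the argument will then be to use the coding function $\sigma$ together with the odd-weight restrictions of Definition~\ref{DefnOfGammaAndSpace} to build a dependent sequence of elements $\gamma_N \in \Gamma$ of weight $m_{2j_0-1}^{-1}$, for a suitably chosen $j_0$ and with $N = n_{2j_0-1}$, whose evaluation analysis inserts at the $i$-th stage an entry $e_{\eta_i}^*$ chosen so that $e_{\eta_i}^*$ effectively records (up to controlled error) the value $f_i(Tx_i) \geq \delta$. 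Evaluating $e_{\gamma_N}^*$ on the RIS-average $N^{-1}\sum_{i=1}^{N}Tx_i$ would then produce a value of order $m_{2j_0-1}^{-1}\delta$, whereas Proposition~\ref{AHProp5.6} applied to the RIS $(Tx_i)$ caps the norm of this average at $O(m_{2j_0-1}^{-2})$. Choosing $j_0$ large enough delivers the contradiction.

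The principal obstacle is ensuring the dependent-sequence construction respects the new odd-weight restriction that $\weight\eta_i = m_{4k}^{-1}$ with $k \in \Sigma(\xi_{i-1})$, rather than simply $k = \sigma(\xi_{i-1})$ as in the original Argyros--Haydon setting. This is precisely why $\Sigma(\xi)$ was defined to include $\sigma(\delta)$ for every $\delta$ lying in $G^{-j}(\xi)$ with $1 \leq j \leq k-1$: when the functional $f_i$ witnessing $f_i(Tx_i) \geq \delta$ is linked to $x_i$ only after one or more applications of $S^*$ (as must happen, since $f_i$ already annihilates each $S^j x_i$ for $0\leq j\leq k-1$), Lemma~\ref{SigmaGammaContainedinSigmaGGamma} together with the $G$-preimage structure guarantees that an appropriate $\sigma$-label remains admissible at each step of the coding, with the monotonicity result Lemma~\ref{monotonicity_of_odd_weights} keeping the contributions of successive stages well-separated. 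This interplay between the operator $S$ and the Argyros--Haydon coding is the novel technical ingredient not present in \cite{AH}, and navigating it is the crux of the proof.
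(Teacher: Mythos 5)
Your opening moves do match the paper's strategy (argue by contradiction, use Hahn--Banach to get norm-one functionals killing $\langle x_i, Sx_i,\dots,S^{k-1}x_i\rangle$ but large on $Tx_i$, and feed these into the odd-weight coding to build a dependent sequence), but the contradiction mechanism you describe has a genuine gap. First, $(Tx_i)$ need not be a RIS --- nothing about an arbitrary bounded $T$ gives the weight-decay condition (3) of Definition~\ref{RISDef} for $Tx_i$ --- and even if it were, Proposition~\ref{AHProp5.6} does not cap $\|N^{-1}\sum_i Tx_i\|$ by $O(m_{2j_0-1}^{-2})$: part (1) only yields $O(m_{2j_0-1}^{-1})$, while the hypothesis of part (2) (flatness of $\sum_{i\in J}Tx_i(\gamma)$ for \emph{every} $\gamma$ of weight $m_{2j_0-1}^{-1}$) is destroyed by your own construction, since at $\gamma_N$ that sum is of order $Nm_{2j_0-1}^{-1}\delta$, far larger than $C$. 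Second, the claimed lower bound is not available as stated: an element $\eta_i$ of weight $m_{4j}^{-1}$ whose analysis ``records'' a single functional $f_i$ contributes only about $m_{4j}^{-1}f_i(Tx_i)$ to $e^*_{\eta_i}(Tx_i)$, which is negligible. To make the weights cancel one must first form inner weighted averages $z_i=m_{2j_i}n_{2j_i}^{-1}\sum_k x_k$ over long stretches of the RIS, inserting the (truncated) Hahn--Banach functionals as the $b^*_k$'s of Lemma~\ref{RISZeroSpecialExact}; this is exactly what Lemma~\ref{DistExact} provides, giving $(16C,2j_i,0)$-special exact pairs $(z_i,\eta_i)$ with $(Tz_i)(\eta_i)>\tfrac{7}{16}\delta$ together with the localization estimates (3),(4) that control the cross terms.

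With those $z_i$ in hand the inequalities are attached to the opposite vectors from yours: since the $b^*_k$'s annihilate $x_k,Sx_k,\dots,S^{k-1}x_k$, each pair satisfies $S^lz_i(\eta_i)=0$, so $(z_i)$ is a weak $(16C,2j_0-1,0)$-dependent sequence and Proposition~\ref{0DepSeqUpperEst} bounds $\|N^{-1}\sum_i z_i\|$ by $70\cdot 16\,Cm_{2j_0-1}^{-2}$; boundedness of $T$ transfers this to $T(N^{-1}\sum_i z_i)$, while evaluating $e^*_{\gamma}$ against $T(N^{-1}\sum_i z_i)$ (using (3),(4) of Lemma~\ref{DistExact}) gives a value of order $m_{2j_0-1}^{-1}\delta$, and choosing $j_0$ large yields the contradiction --- the $m^{-2}$ estimate is applied to the domain-side average, never to $(Tx_i)$. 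Note also that the annihilation of the $S^jx_i$ is not needed to keep a $\sigma$-label admissible while building the sequence (one simply uses $\sigma(\xi_{i-1})\in\Sigma(\xi_{i-1})$); it is needed in the upper estimate, Lemma~\ref{ZeroDepSeqLem}, where a competing odd-weight element $\gamma'$ may carry a label from $\Sigma(\xi'_{i-1})$ arising from a $G$-preimage, so that Lemma~\ref{IntersectionPropertyofSigmaSets} forces $F^j(\xi_{l-1})=\xi'_{i-1}$ and produces the term $\langle S^jx_k,e^*_{\eta_k}\rangle$, which is killed precisely by condition (4)/(4$'$) of Definition~\ref{SpecialExactPair}. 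So while your ingredients are the right ones, the upper and lower estimates in your sketch are placed on the wrong sequences, and the argument as written does not go through.
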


The proof is similar to that given in \cite{AH}. We will need slight modifications to the definitions of exact pairs and dependent sequences. We find it convenient to define both the $0$ ($\delta = 0$ in the definitions that follow) and $1$ ($\delta = 1$ in the definitions that follow) exact pairs and dependent sequences below. However, initially, we will only be concerned with the $0$ exact pairs and dependent sequences. The $1$ exact pairs and dependent sequences will only be needed to establish that the space $\X_k$ is hereditarily indecomposable. We also introduce the new, but related notions of `weak exact pairs' and `weak dependent sequences' which will be useful to us later in establishing strict singularity of $S$. 

\begin{defn} \label{SpecialExactPair}

Let $C>0, \, \delta \in \{0 ,1\}$. A pair $(x,\eta)\in
\X_k\times \Gamma$ is said to be a $(C,j,\delta )$-{\em special exact pair}
if
 \begin{enumerate}
 \item $\|x\| \leq C$
 \item $|\langle d^*_\xi,x\rangle|\le Cm_{j}^{-1}$ for all $\xi\in \Gamma$;
 \item $\weight \eta = m_{j}^{-1}$
 \item $x(\eta) = \delta$ and $S^l x(\eta) = 0$ for every $1 \leq l \leq k-1 $
 \item for every element $\eta'$ of
$\Gamma$ with $\weight\eta'= m_{i}^{-1}\ne m_j^{-1}$, we have
$$
|x(\eta')| \le \begin{cases} Cm_i^{-1} &\text{ if } i<j\\
                 Cm_{j}^{-1} &\text{ if }i>j.
                            \end{cases}
$$
\end{enumerate}
Given also an $\ve > 0$ we will say a pair $(x,\eta)\in
\X_k\times \Gamma$ is a $(C,j, 0, \ve)$-{\em weak exact pair}
if condition (4) is replaced by the following (weaker) condition
\begin{enumerate}
\item[4$'$] $ |S^l x ( \eta) | \leq C \ve$ for $ 0 \leq l \leq k-1$
\end{enumerate}
We will say a pair $(x, \eta) \in \X_k \times \Gamma$ is a $(C, j, 1, \ve)$-{\em weak exact pair} if condition (4) is replaced by condition 
\begin{enumerate}
\item[4$''$] $x(\eta) = 1$ and $|S^lx(\eta) | \leq C\ve$ for $1 \leq l \leq k-1$
\end{enumerate}
\end{defn}
We note that a $(C, j, \delta)$ special exact pair is a $(C, j, \delta, \ve)$ weak exact pair for any $\ve > 0$. Moreover, the definition of a $(C,j,\delta)$-special exact pair is the same as the definition of a $(C, j, \delta)$ exact pair given in \cite{AH} but with the additional requirement that $S^jx(\eta) = 0$ for all $j$. The remark made in AH (following the definition of exact pairs) is therefore still valid. In fact it is easily verified that the same remark in fact holds for weak exact pairs.  For convenience, we state the remark again as it will be useful to us later:
\begin{remark}
A $(C,j, \delta, \ve)$ weak exact pair also satisfies the estimates
$$
|\langle e^*_{\eta'},P_{(s,\infty)}x\rangle| \le \begin{cases} 6Cm_i^{-1} &\text{ if } i<j\\
                 6Cm_{j}^{-1} &\text{ if }i>j
                            \end{cases}
$$
for elements $\eta'$ of $\Gamma$ with $\weight \eta'=m_i^{-1}\ne
m_j^{-1}$.
\end{remark}

We will need the following method for constructing 0 special exact pairs.
\begin{lem}\label{RISZeroSpecialExact}
Let $(x_k)_{k=1}^{n_{2j}}$ be a skipped-block $C$-RIS, and let
$q_0<q_1<q_2<\cdots<q_{n_{2j}}$ be  natural numbers such that $\ran
x_k\subseteq (q_{k-1},q_k)$ for all $k$. Let $z$ denote the weighted
sum $z=m_{2j}n_{2j}^{-1}\sum_{k=1}^{n_{2j}}x_k$. For each $k$ let
$b^*_k$ be an element of $B_{q_{k-1},q_{k}-1}$ with $\langle
b^*_k,x_k\rangle=0$ and $\langle
(S^*)^l b^*_k,x_k\rangle= \langle b^*_k , S^l x_k \rangle = 0$ for all $l$.Then there exist $\zeta_i \in \Delta_{q_i}$
($1\le i\le n_{{2j}}$) such that the element $\eta=\zeta_{n_{2j}}$
has analysis $(q_i,b^*_i,\zeta_i)_{1\le i\le n_{2j}}$ and $
(z,\eta)$ is a $(16C,{2j},0)$-special exact pair.
\end{lem}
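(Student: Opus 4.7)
The plan is to adapt the standard exact-pair construction from \cite{AH}, with the new feature being the verification that $S^l z(\eta) = 0$ for $1\le l\le k-1$; this is where the two hypotheses $\langle b_k^*, x_k\rangle = 0$ and $\langle b_k^*, S^l x_k\rangle = 0$ enter. First I would build the $\zeta_i$ recursively: set $\zeta_1 = (q_1, q_0, m_{2j}^{-1}, b_1^*)\in\Delta_{q_1}$ (an age-$1$ even-weight element, permissible since $b_1^* \in B_{q_0,q_1-1}$ and the $2j \le q_1$ assumption can be arranged from the RIS construction), and then $\zeta_{i+1} = (q_{i+1}, \zeta_i, m_{2j}^{-1}, b_{i+1}^*)\in\Delta_{q_{i+1}}$; the age constraint $\age\zeta_i = i < n_{2j}$ is satisfied all the way up to $i = n_{2j}-1$. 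Proposition~\ref{EvalAnal} then yields $\eta := \zeta_{n_{2j}}$ with analysis $(q_i, b_i^*, \zeta_i)_{1\le i \le n_{2j}}$ and evaluation formula $e^*_\eta = \sum_r d^*_{\zeta_r} + m_{2j}^{-1}\sum_r P^*_{(q_{r-1}, q_r)}b^*_r$.

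Conditions (1), (3), and (5) of Definition~\ref{SpecialExactPair} are direct consequences of Proposition~\ref{AHProp5.6}: condition (3) is built in, condition (1) gives $\|z\| \le 10C \le 16C$, and for condition (5), applied to $\eta'$ of weight $m_i^{-1}\ne m_{2j}^{-1}$, part (1) of that proposition yields the bounds $16C m_i^{-1}$ when $i<2j$ and $10Cm_{2j}^{-1}\le 16C m_{2j}^{-1}$ when $i>2j$ (using Assumption~\ref{mnAssump} to absorb the $4Cn_{2j}^{-1}$ term). For condition (2), the skipped-block hypothesis ensures $\langle d^*_\xi, x_k\rangle = 0$ unless $\rank\xi$ belongs to the unique interval $\ran x_k \subseteq (q_{k-1}, q_k)$ containing it, after which the uniform bound $\|d^*_\xi\|\le 2M$ combined with $m_{2j}^2/n_{2j}\le 1$ (from Assumption~\ref{mnAssump}) delivers $|\langle d^*_\xi, z\rangle|\le 16C m_{2j}^{-1}$.

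The genuinely new part is condition (4): both $z(\eta) = 0$ and $S^l z(\eta) = 0$ for $1\le l\le k-1$. Expanding via the evaluation analysis,
\[
z(\eta) = \sum_{r=1}^{n_{2j}}\langle d^*_{\zeta_r}, z\rangle + m_{2j}^{-1}\sum_{r=1}^{n_{2j}}\langle P^*_{(q_{r-1}, q_r)} b^*_r, z\rangle.
\]
Every term in the first sum vanishes because $\rank\zeta_r = q_r$ while the ranges $\ran x_s \subseteq (q_{s-1}, q_s)$ never contain any $q_r$. For the second sum, the skipped-block structure implies $P^*_{(q_{r-1}, q_r)}$ annihilates $x_s$ for $s\ne r$ and fixes $x_r$, reducing each term to $m_{2j}n_{2j}^{-1}\langle b^*_r, x_r\rangle$, which vanishes by the first hypothesis. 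For $S^l z(\eta) = \langle (S^*)^l e^*_\eta, z\rangle$, I would use that $F$ is rank-preserving (Theorem~\ref{R^*andGConstruction}(2)), so $(S^*)^l$ maps each $M_n^* = [d^*_\gamma : \gamma\in\Delta_n]$ into itself and therefore commutes with each $P^*_{(q_{r-1}, q_r)}$. Then the computation above transports verbatim after replacing $b^*_r$ by $(S^*)^l b^*_r$ and $\zeta_r$ by $F^l(\zeta_r)$ (when defined): the $d^*_{F^l(\zeta_r)}$ terms vanish for the same rank-skipping reason, and the projection terms reduce to $m_{2j}n_{2j}^{-1}\langle (S^*)^l b^*_r, x_r\rangle = m_{2j}n_{2j}^{-1}\langle b^*_r, S^l x_r\rangle = 0$ by the second hypothesis. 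The main (modest) obstacle is this last computation; everything else is routine adaptation of the Argyros--Haydon bookkeeping.
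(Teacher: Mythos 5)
Your proposal is correct and follows essentially the same route as the paper: the construction of the $\zeta_i$ and the verification of conditions (1), (2), (3), (5) are the routine Argyros--Haydon bookkeeping, and the new condition (4) is settled exactly as in the paper's argument, the only cosmetic difference being that you transfer $(S^*)^l$ onto $e^*_\eta$ and use its commutation with the projections $P^*_{(q_{r-1},q_r)}$ (via rank-preservation of $F$), whereas the paper keeps $S^l$ on $z$ and notes that $\ran S^l x_k \subseteq (q_{k-1},q_k)$ --- two equivalent formulations of the same computation ending in $\langle b^*_r, S^l x_r\rangle = 0$.
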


\begin{proof}
The proof is the same as in AH. We only need to show that $S^lz(\eta) = 0$ for $1\leq l \leq k-1$. This is easy. 
\[
S^lz(\eta) = \langle S^lz, e_{\eta}^* \rangle = \langle S^lz, \sum_{k=1}^{n_{2j}} d_{\zeta_k}^* + m_{2j}^{-1} P_{(q_{k-1},q_k)}^* b_k^* \rangle
\]
It is clear from the definition of $S$ that $\ran x_k \subseteq (q_{k-1},q_k) \implies \ran S^lx_k \subseteq (q_{k-1},q_k)$ and since $\rank\zeta_k = q_k$ for every $k$, it follows that $\langle S^lz, \sum_{k=1}^{n_{2j}} d_{\zeta_k}^* \rangle = 0$. We thus see that \[
S^lz(\eta) = n_{2j}^{-1} \sum_{k=1}^{n_{2j}} \langle S^lx_k , b_k^* \rangle = 0
\]
as required.
\end{proof}

\begin{defn}\label{DepSeq}Consider the space $\X_k$. We
shall say that a sequence $(x_i)_{i\le n_{2j_0-1}}$ is a $(C,
2j_0-1,\delta)$-{\em special dependent sequence} if there exist
$0=p_0<p_1<p_2<\cdots<p_{n_{2j_0-1}}$, together with
$\eta_i\in\Gamma_{p_i-1}\setminus \Gamma_{p_{i-1}}$ and $\xi_i\in
\Delta_{p_i}$ ($1\le i\le n_{2j_0-1}$) such that
\begin{enumerate}
\item for each $k$, $\ran x_k\subseteq (p_{k-1},p_k)$;
 \item the
element $\xi=\xi_{n_{2j_0-1}}$ of $\Delta_{p_{n_{2j_0-1}}}$ has weight
$m^{-1}_{2j_0-1}$ and analysis
$(p_i,e^*_{\eta_i},\xi_i)_{i=1}^{n_{2j_0-1}}$;
\item $(x_1,\eta_1)$ is a $(C,4j_1,\delta)$-special exact pair; \item
for each $2\le i\le n_{2j_0-1}$, $(x_i,\eta_i)$ is a
$(C,4\sigma(\xi_{i-1}),\delta)$-special exact pair.
\end{enumerate}
If we instead ask that 
\begin{enumerate}
\item[3$'$] $(x_1, \eta_1)$ is a $(C, 4j_1, \delta, n_{2j_0-1}^{-1})$ weak exact pair
\item[4$'$]  $(x_i , \eta_i)$ is a $(C, 4\sigma(\xi_{i-1}), \delta, n_{2j_0-1}^{-1})$ weak exact pair for $2 \leq i \leq n_{2j_0-1}$
\end{enumerate}
we shall say the sequence $(x_i)_{i=0}^{n_{2j_0-1}}$ is a {\em weak $(C, 2j_0-1, \delta)$ dependent sequence}.

In either case, we notice that, because of the special odd-weight conditions, we necessarily have $m^{-1}_{4j_1} = \weight \eta_1
<n^{-2}_{2j_0-1}$, and $\weight \eta_{i+1} =m^{-1}_{4\sigma(\xi_i)} < n^{-2}_{2j_0-1}$,
by lemma \ref{monotonicity_of_odd_weights} for $1\le i<n_{2j_0-1}$. 
\end{defn}
We also observe that a $(C, 2j_0 - 1, 0)$ special dependent sequence is certainly a weak $(C, 2j_0 - 1, 0)$ dependent sequence.
\begin{lem}\label{ZeroDepSeqLem}
Let $(x_i)_{i\le n_{2j_0-1}}$ be a weak $(C, 2j_0-1,0)$-{dependent
sequence} in $\X_k$ and let $J$ be a sub-interval of
$[1,n_{2j_0-1}]$. For any $\gamma'\in \Gamma$ of weight $m_{2j_0-1}$
we have
$$
|\sum_{i\in J} x_i(\gamma')| \le 7C.
$$
\end{lem}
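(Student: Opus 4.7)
My plan is to follow the template of the analogous lemma in \cite{AH}, adapted to our weak exact pairs and the presence of the operator $S$. Let $\gamma'$ have $\weight\gamma' = m_{2j_0-1}^{-1}$; by Proposition \ref{EvalAnal} it admits an analysis $(q_0, (q_r, c_r^*, \zeta_r)_{r=1}^{a})$ with $a \le n_{2j_0-1}$. The odd-weight restriction in Definition \ref{DefnOfGammaAndSpace} forces each nonzero $c_r^*$ to equal $e^*_{\mu_r}$ with $\weight\mu_r = m_{4k_r}^{-1} < n_{2j_0-1}^{-2}$ (and, for $r > 1$, $k_r \in \Sigma(\zeta_{r-1})$). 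Writing $\xi = \xi_{n_{2j_0-1}}$ with analysis $(p_i, e^*_{\eta_i}, \xi_i)_{i=1}^{n_{2j_0-1}}$, I define the level of agreement $r^* = \max\{r : \zeta_s = \xi_s \text{ for all } s \le r\}$ (with $r^* = 0$ if no such $r$ exists); then for $r \le r^*$ we have $q_r = p_r$ and $\mu_r = \eta_r$.

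Expand $\sum_{i\in J} x_i(\gamma') = \sum_{i\in J}\langle x_i, e^*_{\gamma'}\rangle$ using $e^*_{\gamma'} = \sum_r d^*_{\zeta_r} + m_{2j_0-1}^{-1}\sum_r P^*_{(q_{r-1},q_r)}c_r^*$. The diagonal $d^*$-terms are bounded by $Cm_{j_i}^{-1} < Cn_{2j_0-1}^{-2}$ from condition (2) of the weak exact pair, contributing $O(C)$ in total. The off-diagonal $c^*$-crossings, meaning $r \le r^* < i$ or $i \le r^* < r$, vanish because $\ran x_i \subseteq (p_{i-1}, p_i)$ and $q_s = p_s$ for $s \le r^*$ make the projection ranges disjoint. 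The matching head ($r = i \le r^*$) contributes $m_{2j_0-1}^{-1} x_i(\eta_i)$, which by the weak exact pair condition (4$'$) satisfies $|x_i(\eta_i)| \le Cn_{2j_0-1}^{-1}$, giving a total head contribution at most $Cm_{2j_0-1}^{-1}$.

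For tail pairs $(i, r)$ with $i, r > r^*$ and $c_r^* = e^*_{\mu_r}$, I separate into \emph{good} pairs (with $\weight\mu_r \ne \weight\eta_i$) and \emph{bad} pairs (with $\weight\mu_r = \weight\eta_i$). Good pairs admit the estimate $|\langle P_{(q_{r-1},q_r)} x_i, e^*_{\mu_r}\rangle| \le 6Cm_{\min}^{-1} \le 6Cn_{2j_0-1}^{-2}$ via the remark following Definition \ref{SpecialExactPair}, with total contribution $O(C)$ after summing and rescaling by $m_{2j_0-1}^{-1}$. Each bad pair contributes at most $m_{2j_0-1}^{-1} \cdot MC$ via the trivial bound $|\langle P_{(q_{r-1},q_r)} x_i, e^*_{\mu_r}\rangle| \le M\|x_i\| \le MC$, so it suffices to show that the number of bad pairs is $O(1)$ (say bounded by a function of $k$).

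The main obstacle is precisely this count of bad pairs. Any bad pair forces $\sigma(\xi_{i-1}) = k_r \in \Sigma(\zeta_{r-1})$ (with the convention $j_1$ when $i = 1$); Lemma \ref{IntersectionPropertyofSigmaSets} combined with the rank-preservation of $G$ then yields either $\xi_{i-1} = \zeta_{r-1}$ (which, by unwinding the analyses, forces $i = r = r^* + 1$) or $G^j(\xi_{i-1}) = \zeta_{r-1}$ for some $1 \le j \le k-1$. Lemma \ref{MonotonicityOfSigmaSets} tells us that the $\Sigma$-sets at distinct ranks are disjoint, so each $i$ determines at most one $r$ and vice versa; and Lemma \ref{monotonicity_of_odd_weights} on the strict decay of the odd-weight functional weights, together with the rigidity imposed by the shift relation $\zeta_{r-1} = G^j(\xi_{i-1})$ and the maximality of $r^*$, bounds the total number of bad pairs by a constant depending only on $k$. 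Combining the head, diagonal $d^*$-terms, good tail and bad tail contributions yields $|\sum_{i\in J} x_i(\gamma')| \le 7C$ as claimed.
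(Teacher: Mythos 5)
Your overall skeleton (expanding $e^*_{\gamma'}$ via its analysis, separating the $d^*$-terms, the matched head, and the weight-matched versus unmatched tail pairs) is in the spirit of the paper's argument, but it breaks down exactly at the step you flag as the main obstacle: the claim that the number of \emph{bad} pairs is bounded by a constant depending only on $k$. That claim is false, and it is false for a structural reason that is the whole point of this lemma. The analysis of $\gamma'$ may follow the $F^j$-image ($1\le j\le k-1$) of the special branch for an arbitrarily long initial segment: if $F^j(\xi_t)$ is defined, then $e^*_{F^j(\xi_t)}=(S^*)^j e^*_{\xi_t}$, whose analysis consists of the elements $F^j(\xi_r)$ with functionals $e^*_{F^j(\eta_r)}$ (omitting undefined terms), and $\gamma'$ may be any odd-weight element extending $F^j(\xi_t)$ (its age allows this, and $\Sigma(\xi_t)\subseteq\Sigma(F^j(\xi_t))$). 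Since $\weight F^j(\eta_r)=\weight\eta_r$, every $r\le t$ then yields a bad pair, while $\zeta_r=F^j(\xi_r)\ne\xi_r$, so your literal-agreement level is $r^*=0$ and the number of bad pairs can be as large as $n_{2j_0-1}-1$, not $O_k(1)$. Even if a constant count were available, the trivial bound $MC$ per bad pair would give a constant depending on $k$ and the FDD constant, not $7C$.

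The paper avoids any counting of coincidences. It picks $l$ maximal such that some weight in the analysis of $\gamma'$ equals $\weight\eta_l$; the terms $k>l$ are handled by the unmatched estimates (the remark after Definition \ref{SpecialExactPair} together with $\weight\eta'_i,\weight\eta_k<n_{2j_0-1}^{-2}$), the term $k=l$ by the trivial bound $C$, and --- this is the idea missing from your proposal --- all $k<l$ are treated \emph{simultaneously}: if the coincidence occurs at position $i>1$ of $\gamma'$'s analysis, Lemma \ref{IntersectionPropertyofSigmaSets} gives $\xi'_{i-1}=\xi_{l-1}$ or $\xi'_{i-1}=F^j(\xi_{l-1})$, hence $p'_{i-1}=p_{l-1}$, and on vectors with range in $(0,p_{l-1})$ the functional $e^*_{\gamma'}$ acts as $(S^*)^j e^*_{\xi_{l-1}}$. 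Thus $x_k(\gamma')=\langle S^j x_k, e^*_{\xi_{l-1}}\rangle=m_{2j_0-1}^{-1}S^j x_k(\eta_k)$, and it is precisely condition (4$'$) of the weak exact pair --- smallness of \emph{all} iterates $S^j x_k(\eta_k)$, not just $x_k(\eta_k)$ --- that bounds each such term by $Cn_{2j_0-1}^{-1}$, giving $\sum_{k<l}|x_k(\gamma')|\le C$ (the case $i=1$ is excluded from producing further matches below $l$ by Lemma \ref{monotonicity_of_odd_weights} applied to $\gamma'$). You invoke the shift relation $\zeta_{r-1}=G^j(\xi_{i-1})$ but never feed it back into condition (4$'$); replacing your counting step by this identification is what actually yields the $7C$ bound.
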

\begin{proof}
Let $\xi_i,\eta_i,p_i,j_1$ be as in the definition of a dependent
sequence and let $\gamma$ denote $\xi_{n_{2j_0-1}}$, an element of
weight $m_{2j_0-1}$. Let $\big( p_0', (p_i',b'^*_i,\xi'_i)_{1\le i\le
a'} \big)$ be the analysis  of $\gamma'$ where each $b'^*_i$ is either $0$ or $e_{\eta'_i}^*$ for some suitable $\eta'$. 

The proof is easy if all the $b'^*_r$ are $0$, or if \[
\{ \weight\eta'_r : 1\leq r \leq a', b'^*_r = e_{\eta'_r}^* \} \cap \{ \weight\eta_i : 1\leq i \leq a \} = \varnothing
\]

So we may suppose that there is some $1 \leq r \leq a'$ s.t. $b'^*_r = e_{\eta'_r}^*$ with $\weight\eta'_r = \weight\eta_i$ for some $i$. We choose $l$ maximal such that there exists $i$ with $\weight\eta_i'= \weight\eta_l$. Clearly we can estimate as follows\[
|\sum_{k\in J} x_k(\gamma')| \leq |\sum_{k\in J,\ k<l} x_k(\gamma')|
+ |x_l(\gamma')| +
\sum_{k\in J,\ k>l}|x_k(\gamma')|
\]
We now estimate the three terms on the right hand side of the inequality separately. $|x_l(\gamma')| \leq \|x\| \leq C$. Also

\begin{align*}
\sum_{k\in J,\ k>l}|x_k(\gamma')| &= \sum_{k\in J,\ k>l}\sum_{i\le a'}|\langle d^*_{\xi'_i},x_k\rangle + m_{2j_0-1}^{-1}\langle b'^*_i, P_{(p'_{i-1},\infty)}x_k\rangle| \\
&\leq n_{2j_0-1}^2\max_{l<k\in J,\ i\le a'}|\langle d^*_{\xi'_i},x_k\rangle + m_{2j_0-1}^{-1} \langle b'^*_i,
P_{(p'_{i-1},\infty)}x_k\rangle|
\end{align*}

Now each $b'^*_i$ is $0$ or $e_{\eta'_i}^*$ where $\weight\eta'_i \neq \weight \eta_k$ for any $k > l$ (or else we would contradict maximality of $l$). If $b'^*_i = 0$, then \[
|\langle d^*_{\xi'_i},x_k\rangle + m_{2j_0-1}^{-1} \langle b'^*_i, P_{(p'_{i-1},\infty)}x_k\rangle| = |\langle d^*_{\xi'_i},x_k\rangle | \leq C\weight\eta_k \leq C n_{2j_0 - 1}^{-2}
\]
where the penultimate inequality follows from the definition of a (weak) exact pair, and the final inequality follows from lemma \ref{monotonicity_of_odd_weights}. Otherwise $b'^*_i = e_{\eta'_i}^*$ where in particular (by restrictions on elements of odd weight) $\weight\eta'_i < n_{2j_0 - 1}^{-2}$. By the definition of (weak) exact pair and the remark following it we have \[
|\langle d^*_{\xi'_i},x_k\rangle +m_{2j_0-1}^{-1}P_{(p_{i-1},\infty)}x_k(\eta'_i)| \le
 C\weight \eta_k + 6Cm_{2j_0-1}^{-1}\max\{\weight \eta'_{i}, \weight \eta_{k}\} \leq 3Cn_{2j_0 -1}^{-2}
\]
Finally we consider $|\sum_{k\in J,\ k<l} x_k(\gamma')|$. Obviously if $l=1$ this sum is zero, and the lemma is proved. So we can suppose $l>1$. By definition of $l$, there exists some $i$ such that $b'^*_i = e_{\eta'_i}^*$ and $\weight\eta_l = \weight\eta'_i$. Now either $i = 1$ or $i > 1$. We consider the 2 cases separately.

Suppose first that $i=1$.
\begin{align*}
|\sum_{k \in J,\ k<l}x_k(\gamma')| &= |\langle \sum_{k\in J,\ k<l}x_k , \sum_{r=1}^{a'} d^*_{\xi'_r} + m_{2j_0-1}^{-1}  P^*_{(p'_{r-1},\infty)}b'^*_r \rangle | \\
&\leq n_{2j_0 - 1}^{2} \max_{J \ni k<l,\ r \leq a'} |\langle x_k,d^*_{\xi'_r} \rangle| + m_{2j_0-1}^{-1} | \langle \sum_{k\in J,\ k<l}x_k , \sum_{\substack{r\leq a' \text{s.t.} \\ b'^*_r = e_{\eta'_r}^*}} P^*_{(p'_{r-1},\infty)}e_{\eta'_r}^* \rangle | \\
&\leq C + m_{2j_0-1}^{-1} | \langle \sum_{k\in J,\ k<l}x_k , \sum_{\substack{r\leq a' \text{s.t.} \\ b'^*_r = e_{\eta'_r}^*}} P^*_{(p'_{r-1},\infty)}e_{\eta'_r}^* \rangle |
\end{align*}
where the last inequality follows once again from the definition of exact pair. Suppose that for some $k \in J$, $k < l$, there is an $r$ in $\{1, 2, \dots a' \}$ with $b'^*_r = e_{\eta'_r}^*$ and $\weight\eta'_r = \weight\eta_k$. By lemma \ref{monotonicity_of_odd_weights}, we get $\weight\eta'_r = \weight\eta_k > \weight\eta_l = \weight\eta'_1$, i.e. $\weight\eta'_r > \weight\eta'_1$. But since $\gamma'$ also has odd weight, this clearly contradicts lemma \ref{monotonicity_of_odd_weights} applied to $\gamma'$. Thus there does not exist $r$ in $\{1, 2, \dots a' \}$ with $b'^*_r = e_{\eta'_r}^*$ and $\weight\eta'_r = \weight\eta_k$ for some $k \in J$, $k<l$. Using an argument similar to the above, we finally deduce that 
\[
m_{2j_0-1}^{-1} | \langle \sum_{k\in J,\ k<l}x_k , \sum_{\substack{r\leq a' \text{s.t.} \\ b'^*_r = e_{\eta'_r}^*}} P^*_{(p'_{r-1},\infty)}e_{\eta'_r}^* \rangle | \leq 2C \]
and so we get the required result.

Finally it remains to consider what happens when $i > 1$. Recall we are also assuming $l > 1$ and $\weight\eta'_i = \weight\eta_l$. But by definition of a special exact pair, we have $\weight\eta_l = m_{4\sigma(\xi_{l-1})}^{-1}$, and by restrictions on elements of odd weights, $\weight\eta'_i = m_{4\omega}^{-1}$ with $\omega \in \Sigma(\xi'_{i-1})$. By strict monotonicity of the sequence $m_j$, we deduce that $\omega = \sigma(\xi_{l-1}) \in \Sigma(\xi'_{i-1})$. By lemma \ref{IntersectionPropertyofSigmaSets} there are now two possibilites. Either $\xi_{l-1} = \xi'_{i-1}$ or, if not, there is some $j$, $1\leq j \leq k-1$, such that $F^j(\xi_{l-1}) = \xi'_{i-1}$. In either of these cases, we note that in particular this implies $p_{l-1} = p'_{i-1}$ since $F$ preserves rank and we can write the evaluation analysis of $\gamma'$ as \[
e_{\gamma'}^* = (S^*)^j(e^*_{\xi_{l-1}}) + \sum_{r=i}^{a'} d_{\xi'_r}^* + m_{2j_0-1}^{-1} P_{(p'_{r-1},p'_r)}^*b'^*_r
\]
for some $0 \leq j \leq k-1$. Now, for $k<l$, since $\ran x_k \subseteq (p_{k-1}, p_k) \subseteq (0,p_{l-1}) = (0, p'_{i-1})$, we see that 
\begin{align*}
|\langle x_k, e_{\gamma'}^* \rangle| &= |\langle x_k, (S^*)^je_{\xi_{l-1}}^* \rangle| \\
&= |\langle S^jx_k, \sum_{r=1}^{l-1} d_{\xi_r}^* + m_{2j_0-1}^{-1}P^*_{(p_{r-1}, p_r)} e_{\eta_r}^* \rangle| \\
&= m^{-1}_{2j_0-1} |\langle S^jx_k, e_{\eta_k}^* \rangle| \\
&\leq Cn_{2j_0-1}^{-1} \text{ by definition of a weak exact pair}
\end{align*}
and so \[
|\sum_{k \in J,\ k<l}x_k(\gamma ')| \leq n_{2j_0-1} \max_{k \in J,\  k<l} |x_k(\gamma')| \leq C
\]
This completes the proof.
\end{proof}

As a consequence of the above lemma and proposition \ref{AHProp5.6} we obtain the following upper norm estimate for the averages of weak special dependent sequences.

\begin{prop} \label{0DepSeqUpperEst}
Let $(x_i)_{i \leq n_{2j_0-1}}$ be a weak $(C, 2j_0-1, 0)$ dependent sequence in $\X$. Then \[
\| n_{2j_0-1}^{-1} \sum_{i=1}^{n_{2j_0-1}} x_i \| \leq 70C m_{2j_0-1}^{-2}
\]
\end{prop}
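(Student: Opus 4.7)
The plan is to deduce this from Proposition \ref{AHProp5.6}(2), treating the weak dependent sequence as a RIS whose partial sums at weight-$m_{2j_0-1}^{-1}$ functionals are controlled by Lemma \ref{ZeroDepSeqLem}.

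First I would verify that $(x_i)_{i\le n_{2j_0-1}}$ is itself a $C$-RIS. Condition (1) of Definition \ref{RISDef} follows directly from the norm bound in the weak exact pair definition. For condition (3), write $\tilde{\jmath}_k$ for the weight index of $\eta_k$, i.e.\ $\tilde{\jmath}_1 = 4j_1$ and $\tilde{\jmath}_k = 4\sigma(\xi_{k-1})$ for $k\ge 2$. Since each $(x_k,\eta_k)$ is a weak exact pair with weight $m_{\tilde{\jmath}_k}^{-1}$, condition (5) of Definition \ref{SpecialExactPair} gives exactly the required bound $|x_k(\gamma)|\le Cm_i^{-1}$ for $\weight\gamma=m_i^{-1}$ with $i<\tilde{\jmath}_k$. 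The sequence $(\tilde{\jmath}_k)$ is strictly increasing because $\sigma(\xi_{k-1})>\rank\xi_{k-1}$ and because property (3) of $\sigma$ gives $\sigma(\xi_k)>\sigma(\xi_{k-1})$; moreover $\tilde{\jmath}_{k+1}=4\sigma(\xi_k)>\rank\xi_k = p_k>\max\ran x_k$, which settles condition (2) as well.

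Next I would apply Proposition \ref{AHProp5.6}(2) with $\lambda_k\equiv 1$ and "$j_0$" of that proposition playing the role of our $2j_0-1$. The hypothesis to check is that for every $\gamma'\in\Gamma$ of weight $m_{2j_0-1}^{-1}$ and every subinterval $J\subseteq[1,n_{2j_0-1}]$,
\[
\Bigl|\sum_{k\in J}x_k(\gamma')\Bigr| \le 7C\cdot\max_{k\in J}|\lambda_k|=7C.
\]
But this is precisely the content of Lemma \ref{ZeroDepSeqLem}. Since $(x_i)$ is a $C$-RIS and the tree-like interaction estimate holds with the constant $7C$, Proposition \ref{AHProp5.6}(2) yields
\[
\Bigl\|n_{2j_0-1}^{-1}\sum_{i=1}^{n_{2j_0-1}}x_i\Bigr\| \le 10\cdot 7C\cdot m_{2j_0-1}^{-2} = 70Cm_{2j_0-1}^{-2},
\]
which is the claimed estimate.

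There is essentially no real obstacle here: the hard work has been done in Lemma \ref{ZeroDepSeqLem}, where the combinatorial interaction between the odd-weight restrictions (encoded via $\sigma$ and $\Sigma$) and the dependent sequence was exploited to defeat the action of an adversarial weight-$m_{2j_0-1}^{-1}$ functional. The only minor point requiring care is the verification that the weights $\tilde{\jmath}_k$ of the $\eta_k$ grow fast enough to serve as RIS indices, which is immediate from $\sigma(\xi_{k-1})>\rank\xi_{k-1}\ge\max\ran x_{k-1}$.
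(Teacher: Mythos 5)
Your proposal is correct and follows essentially the same route as the paper: apply Proposition \ref{AHProp5.6}(2) with $\lambda_i=1$ and $2j_0-1$ in the role of $j_0$, with Lemma \ref{ZeroDepSeqLem} supplying the interval hypothesis at the cost of replacing $C$ by $7C$, giving $10\cdot 7C\,m_{2j_0-1}^{-2}$. Your explicit check that a weak dependent sequence is itself a RIS (via the growth of the weights $4j_1, 4\sigma(\xi_1),4\sigma(\xi_2),\dots$) is a detail the paper leaves implicit, and it is verified correctly.
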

\begin{proof}
We apply the second part of proposition \ref{AHProp5.6}, with $\lambda_i = 1$ and $2j_0-1$ playing the role of $j_0$. Lemma \ref{ZeroDepSeqLem} shows that the extra hypothesis of the second part of proposition \ref{AHProp5.6} is satisfied, provided we replace $C$ by $7C$. We deduce that $\| n_{2j_0-1}^{-1} \sum_{i=1}^{n_{2j_0-1}} x_i \| \leq 70C m_{2j_0-1}^{-2}$ as claimed.
\end{proof}

The proof of theorem \ref{EssMainTheorem} is now easy. We obtain the following, minor variation, of lemma 7.2 of \cite{AH}:
\begin{lem}\label{DistExact}
Let $T$ be a bounded linear operator on $\X$, let $(x_i)$ be a
$C$-RIS in $\X \cap \Q^\Gamma$ and assume that
$\dist (Tx_i, \langle x_i, Sx_i \dots ,S^{k-1}x_i \rangle_{\R}) >\delta>0$ for all $i$.  Then, for
all $j,p\in \N$, there exist $z\in [x_i:i\in \N]$, $q>p$ and
$\eta\in \Delta_{q}$ such that
 \begin{enumerate}
 \item $(z,\eta)$ is a $(16C,2j,0)$-special exact pair;
 \item $(Tz)(\eta)>\frac7{16}\delta$;
 \item $\|(I-P_{(p,q)})Tz\|<m_{2j}^{-1}\delta$;
 \item $\langle P^*_{(p,q]}e^*_{\eta},Tz\rangle >\frac{3}{8}\delta$.
 \end{enumerate}
\end{lem}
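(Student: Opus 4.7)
The plan is to follow the argument of Lemma 7.2 of \cite{AH}, modifying it to accommodate the larger annihilator subspace $\langle x_i, S x_i, \dots, S^{k-1} x_i\rangle_{\R}$ in place of $\langle x_i\rangle_\R$, thereby ensuring that the exact pair we construct is a genuine $0$-special one in the sense of Definition \ref{SpecialExactPair} (rather than merely weak).

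For each $i$, use Hahn--Banach together with the identification $\X_k^* = \ell_1(\Gamma)$ of Proposition \ref{Xhasl1Dual} to pick a norm-one $y_i^* \in \ell_1(\Gamma)$ with $\langle y_i^*, S^l x_i\rangle = 0$ for $0 \le l \le k-1$ and $\langle y_i^*, T x_i\rangle > \delta$. Now pass to a subsequence, relabelled $(z_k)$, and select natural numbers $p < q_0 < q_1 < \cdots$ so that $\ran z_k \subseteq (q_{k-1}, q_k)$, the tail errors $\|(I - P_{(q_{k-1}, q_k)}) T z_k\|$ sum to less than $\tfrac{1}{16} m_{2j}^{-1}\delta$, and moreover there exist $b_k^* \in B_{q_{k-1}, q_k - 1}$ satisfying $\langle b_k^*, S^l z_k\rangle = 0$ for $0 \le l \le k-1$ together with $\langle b_k^*, T z_k\rangle > \tfrac{15}{16}\delta$. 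The existence of such $b_k^*$ follows from a density argument: since $(x_i)\subset \X_k \cap \Q^\Gamma$ and $S$ preserves rationality of coordinates (via $S^* e_\gamma^* = e_{F(\gamma)}^*$), the codimension-at-most-$k$ subspace of $\ell_1(\Gamma_{q_k-1}\setminus \Gamma_{q_{k-1}})$ annihilating $\{S^l z_k : 0 \le l \le k-1\}$ admits a rational dense subset, so an arbitrarily close approximation of the restriction of $y_{i_k}^*$ to $(q_{k-1}, q_k)$ may be chosen from $B_{q_{k-1}, q_k - 1}$ lying in this subspace, provided the net parameter $N_{q_k - 1}$ is sufficiently large.

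Now apply Lemma \ref{RISZeroSpecialExact} to the skipped-block $C$-RIS $(z_k)_{k=1}^{n_{2j}}$ together with the functionals $b_k^*$ to obtain $\zeta_1, \dots, \zeta_{n_{2j}}$ such that $\eta := \zeta_{n_{2j}} \in \Delta_{q}$, where $q := q_{n_{2j}}$, has analysis $(q_r, b_r^*, \zeta_r)_{r=1}^{n_{2j}}$ and $(z, \eta)$ is a $(16C, 2j, 0)$-special exact pair, where $z := m_{2j}n_{2j}^{-1} \sum_{k=1}^{n_{2j}} z_k$; this proves (1). For (2)--(4), expand using the evaluation analysis
$$e_\eta^* = \sum_{r=1}^{n_{2j}} d_{\zeta_r}^* + m_{2j}^{-1}\sum_{r=1}^{n_{2j}} P^*_{(q_{r-1}, q_r)} b_r^*.$$
The only significant contributions to $\langle Tz, e_\eta^*\rangle$ come from the diagonal terms $m_{2j} n_{2j}^{-1} \cdot m_{2j}^{-1}\langle Tz_k, P^*_{(q_{k-1}, q_k)} b_k^*\rangle \approx n_{2j}^{-1}\langle Tz_k, b_k^*\rangle \ge n_{2j}^{-1} \tfrac{15}{16}\delta$, the remaining terms being small by the skipped-block structure and the tail bounds; summing gives $\langle Tz, e_\eta^*\rangle > \tfrac{7}{16}\delta$, proving (2). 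Condition (3) is immediate from the triangle inequality and the summable tail bounds $\|(I - P_{(p,q)}) Tz\| \le m_{2j}n_{2j}^{-1}\sum_k \|(I-P_{(q_{k-1},q_k)})Tz_k\|$. For (4), note $\supp e_\eta^* \subseteq (q_0, q] \subseteq (p, q]$, so $P^*_{(p,q]} e_\eta^* = e_\eta^*$, giving the same lower bound as in (2), which exceeds $\tfrac{3}{8}\delta$.

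The main obstacle is the delicate rational-approximation step in the second paragraph: choosing $b_k^* \in B_{q_{k-1}, q_k - 1}$ that \emph{exactly} annihilates the $k$-dimensional subspace $\langle z_k, Sz_k, \dots, S^{k-1}z_k\rangle$ while remaining norm-close to the Hahn--Banach functional $y_{i_k}^*$; this is where the hypothesis $(x_i) \subseteq \X_k \cap \Q^{\Gamma}$ together with the explicit integer-coefficient formula for $S^*$ on the $e_\gamma^*$ basis becomes indispensable. Everything else is a direct adaptation of the Argyros--Haydon argument.
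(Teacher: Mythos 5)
Your proposal is essentially the paper's intended argument: the paper offers no proof of this lemma, presenting it as a ``minor variation'' of Lemma 7.2 of \cite{AH}, and your adaptation --- Hahn--Banach functionals annihilating the whole span $\langle x_i, Sx_i,\dots,S^{k-1}x_i\rangle_{\R}$ (exactly what the strengthened distance hypothesis provides), rational approximation within the nets using $x_i\in\Q^\Gamma$ and the fact that $S^*e^*_\gamma=e^*_{F(\gamma)}$ preserves rationality, then Lemma \ref{RISZeroSpecialExact} followed by the evaluation-analysis estimates for (2)--(4) --- is precisely that variation. One small caveat: the parameters $N_n$ are fixed in advance as part of the construction, so the exact-annihilation step is secured by choosing the ranks $q_k$ (hence which net $B_{q_{k-1},q_k-1}$ is used) suitably large, just as in \cite{AH}, rather than by ``making $N_{q_k-1}$ sufficiently large''; this is the same bookkeeping as in the original lemma and does not affect the approach.
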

The proof of the theorem is now the same as the proof of \cite{AH} proposition 7.3.
\section{Operators on the Space $\X_k$}
In this section, we see that all the operators on the space $\X_k$ are expressible as $\sum_{j=0}^{k-1} \lambda_j S^j + K$ for suitable scalars $\lambda_j$ and some compact operator $K$ on $\X_k$. Before proving our main result, we prove some easy lemmas which will be of use.
\begin{lem}\label{PerturbedRIS}
Let $1\leq j \leq k-1$ and $(x_i)_{i\in\N}$ be a C-RIS in $\X_k$. Suppose there are $\widetilde{x_i}'$ such that $\| \widetilde{x_i}' - x_i \| \to 0$ as $i \to \infty$ and $S^j \widetilde{x_i}' = 0$ for every $i$. Then there is a subsequence $(x_{i_k})_{k\in\N}$ of $(x_i)$ and vectors $x_k'$ satisfying
\begin{enumerate}[(1)]
\item $\ran x_{i_k} = \ran x_k'$
\item $S^jx_k' = 0$ for every $k$
\item $\|x_k' - x_{i_k} \| \to 0$
\item $(x_k')_{k\in\N}$ is a $2C$-RIS
\end{enumerate}
We note that in particular, if $(x_i)_{i\in\N}$ is a $C$-RIS with $S^jx_i \to 0$, then the above hypothesis are satisfied as a consequence of corollary \ref{corSisQuotientOp}.
\end{lem}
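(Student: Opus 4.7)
The plan is to define $x_k' := P_{\ran x_{i_k}} \widetilde{x_{i_k}}'$ for a carefully chosen subsequence $(i_k)$. The key preliminary observation is that every interval projection $P_I$ on $\X_k$ commutes with $S$: from Proposition~\ref{S^*construction} one has $Sd_\delta = \sum_{\gamma\in F^{-1}(\delta)} d_\gamma$, and since $F$ preserves rank (Theorem~\ref{R^*andGConstruction}), $S$ sends each FDD summand $M_n$ into itself, whence $S^l$ commutes with every interval projection. Property (2) is then immediate from $S^j\widetilde{x_{i_k}}' = 0$. Property (1), strictly speaking, only holds in the form $\ran x_k' \subseteq \ran x_{i_k}$; this is what is actually needed for $(x_k')$ to respect the block structure of $(x_{i_k})$, and I interpret the stated equality in this sense. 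Property (3) follows from
\[
\|x_k' - x_{i_k}\| = \|P_{\ran x_{i_k}}(\widetilde{x_{i_k}}' - x_{i_k})\| \le K\|\widetilde{x_{i_k}}' - x_{i_k}\|,
\]
where $K$ is a uniform bound on interval-projection norms (a finite multiple of the basis constant from Theorem~\ref{BDThm}). These three properties hold regardless of the subsequence chosen.

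The substantive step is arranging that $(x_k')$ is a $2C$-RIS. Let $(j_i)$ denote the RIS index-sequence associated with $(x_i)$. I would define $(i_k)$ and a new index-sequence $(j_k')$ by interleaved induction: set $j_1' := 1$ and choose $i_1$ with $K\|\widetilde{x_{i_1}}' - x_{i_1}\|\le C$; then, given $i_{k-1}$, put
\[
j_k' := \max\ran x_{i_{k-1}} + 1,
\]
and choose $i_k > i_{k-1}$ so large that $K\|\widetilde{x_{i_k}}' - x_{i_k}\| \le Cm_{j_k'-1}^{-1}$. Such $i_k$ exists because $\|\widetilde{x_i}' - x_i\|\to 0$ while the right-hand side is a fixed positive constant once $j_k'$ has been determined. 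The sequence $(j_k')$ is strictly increasing because $(x_{i_k})$ is a block sequence, and $j_{k+1}' = \max\ran x_{i_k}+1 > \max\ran x_k'$, verifying condition (ii) of the RIS definition. The original RIS inequality $j_{i_k} > \max\ran x_{i_k-1} \ge \max\ran x_{i_{k-1}}$ forces $j_k' \le j_{i_k}$; hence for $\weight\gamma = m_i^{-1}$ with $i < j_k'$ one has $|x_{i_k}(\gamma)| \le Cm_i^{-1}$ from the original RIS, and therefore
\[
|x_k'(\gamma)| \le |x_{i_k}(\gamma)| + \|x_k' - x_{i_k}\| \le Cm_i^{-1} + Cm_{j_k'-1}^{-1} \le 2Cm_i^{-1},
\]
using $m_i \le m_{j_k'-1}$. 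A similar estimate gives $\|x_k'\|\le C + Cm_{j_k'-1}^{-1} \le 2C$.

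The main obstacle is the interaction between the a priori uncontrolled decay rate of $\|\widetilde{x_i}' - x_i\|$ and the doubly-exponential growth of $(m_j)$: the naive choice $j_k' := j_{i_k}$ would demand $K\|\widetilde{x_{i_k}}' - x_{i_k}\| \le Cm_{j_{i_k}-1}^{-1}$, a self-referential condition on $i_k$ that is not in general satisfiable. Decoupling the two choices -- fixing $j_k'$ from the previously-chosen $i_{k-1}$ \emph{before} selecting $i_k$ -- is precisely the move that sidesteps this circularity and makes the induction go through.
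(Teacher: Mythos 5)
Your proof is correct and follows essentially the same route as the paper: truncate $\widetilde{x_i}'$ by the range projection (using that $S$ commutes with interval projections, since $F$ preserves rank), and then extract a subsequence by fixing the new RIS index from the previously chosen term \emph{before} selecting the next index, so that the perturbation is dominated by $Cm^{-1}$ at the relevant scale. Your reading of (1) as $\ran x_k' \subseteq \ran x_{i_k}$ is also exactly what the paper's construction actually delivers.
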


\begin{proof}
Let $\ran x_i = (p_i, q_i)$ and set $y_i = P_{(p_i, q_i)} \widetilde{x_i}'$. Certainly then $\ran y_i = \ran x_i$ for every $i$. Note that $\left( I - P_{(p_i, q_i)} \right) x_i = 0$ and consequently \[
\left\| \left( I - P_{(p_i, q_i)} \right) \widetilde{x_i}' \right\| = \left\| \left( I - P_{(p_i, q_i)} \right) ( \widetilde{x_i}' - x_i ) \right\| \leq \left\| I - P_{(p_i, q_i)} \right\| \left\| \widetilde{x_i}' - x_i \right\| \leq 5 \left\| \widetilde{x_i}' - x_i \right\| \to 0
\]
It follows that
\begin{align*}
\| y_i - x_i \| &= \left\| \widetilde{x_i}' - \big( ( I - P_{(p_i, q_i)} ) \widetilde{x_i}' \big) - x_i \right\| \\
&\leq \left\| \widetilde{x_i}' - x_i \right\| + \left\| \left( I - P_{(p_i, q_i)} \right) \widetilde{x_i}' \right\| \to 0
\end{align*}
Note also that $S^jy_i = 0$ for every $i$. Indeed, for $\gamma \in \Gamma$ 

\[
S^jy_i (\gamma) = \langle S^jy_i , e_{\gamma}^* \rangle = \langle S^jP_{(p_i, q_i)} \widetilde{x_i}' , e_{\gamma}^* \rangle  
= \langle \widetilde{x_i}' , P_{(p_i, q_i)}^* (S^*)^j e_{\gamma}^* \rangle =  \langle \widetilde{x_i}' ,  (S^*)^j P_{(p_i, q_i)}^* e_{\gamma}^* \rangle = \langle S^j\widetilde{x_i}' , P_{(p_i, q_i)}^* e_{\gamma}^* \rangle = 0
\]
since $S^j\widetilde{x_i}' = 0$ for every $i$.
So far we have managed to achieve (1) - (3) of the above. We show we can extract a subsequence of the $y_i$, $(y_{i_k})_{k\in\N}$ say, such that $(y_{i_k})_{k\in\N}$ is a $2C$-RIS. The proof will then be complete if we set $x_k' = y_{i_k}$ and take the subsequence $(x_{i_k})$ of the $x_i$. 

Since $\| x_i \| \leq C $ for every $i$ and $\| y_i - x_i \| \to 0$, we can certainly assume (by ignoring some finite number of terms at the beginning of the sequence) that $\| y_i \| \leq 2C $ for every $i$. Let $(j_k)$ be the increasing sequence corresponding to the $C$-RIS $(x_i)$, i.e.
\begin{enumerate}
\item $j_{k+1} > \text{max } \ran x_k$
\item $|x_k(\gamma)| \leq Cm_{i}^{-1} \text{ when $\weight\gamma = m_{i}^{-1}$ and $i < j_k$}$
\end{enumerate}
Set $l_1 = j_1$. We can certainly find an $i_1 \geq 1$ such that $\| y_{i_1} - x_{i_1} \| \leq Cm_{l_1}^{-1}$. So if $\gamma \in \Gamma$, $\weight \gamma = m_{w}^{-1}$ with $w < l_1$, then certainly $w < l_1 = j_1 \leq j_{i_1}$ so \[
|y_{i_1} (\gamma) | \leq |(y_{i_1} - x_{i_1})(\gamma) | + |x_{i_1}(\gamma)| \leq Cm_{l_1}^{-1} + Cm_{w}^{-1} \leq 2Cm_{w}^{-1}
\]
Now set $l_2 = j_{i_1+1}$. So $l_2 > \text{max } \ran x_{i_1} = \text{max } \ran y_{i_1}$. 

Inductively, suppose we have defined natural numbers $l_1 \leq l_2 \leq \dots \leq l_n$ and $i_1 < i_2 < \dots < i_n$ such that 
\begin{enumerate}[(i)]
\item $l_{k+1} > \text{max } \ran y_{i_k}$ for all $1\leq k \leq n-1$, and
\item for all $1\leq k \leq n$, $|y_{i_k} (\gamma) | \leq 2Cm_{w}^{-1}$ whenever $\gamma \in \Gamma$, $\weight \gamma = m_{w}^{-1}$ with $w < l_k$
\end{enumerate}
Set $l_{n+1} = j_{i_n + 1}$. It is easily seen from the inductive construction that $l_{n+1} \geq l_n$ and moreover (by choice of $j_k$), $l_{n+1} > \text{max } \ran x_{i_n} = \text{max } \ran y_{i_n}$. Now we can certainly find $i_{n+1} > i_n$ such that $\|y_{i_{n+1}} - x_{i_{n+1}} \| \leq Cm_{l_{n+1}}^{-1}$. So suppose $\gamma \in \Gamma$, $\weight \gamma = m_{w}^{-1}$ with $w < l_{n+1}$ In particular $w < j_{i_n + 1} \leq j_{i_{n+1}}$ so by choice of $i_{n+1}$ and the fact that $(x_i)$ is a RIS we see that   \[
|y_{i_{n+1}} (\gamma) | \leq |(y_{i_{n+1}} - x_{i_{n+1}})(\gamma)| + |x_{i_{n+1}}(\gamma)| \leq Cm_{l_{n+1}}^{-1} + Cm_{w}^{-1} \leq 2Cm_{w}^{-1}
\]

Inductively we obtain a subsequence $(y_{i_k})_{k\in\N}$ which is evidently a $2C$-RIS (with the sequence $(l_k)_{k\in\N}$ satisfying the RIS definition), as required.
\end{proof}
\begin{lem} \label{TechnicalLemma}
Suppose $(x_i)_{i\in \N}$ is a normalised sequence in $\X_k$ and $\lambda_j \in \R$ ($0\leq j \leq k-1$) are scalars such that $\sum_{j=0}^{k-1} \lambda_j S^j x_i \to 0$. If $S^{k-1} x_i \arrownot\to 0$ then $\lambda_j = 0$ for every $j$. Otherwise, there is $1 \leq m \leq k-1$ such that $S^m x_i \to 0$ but $S^j x_i \arrownot\to 0$ if $j < m$, in which case, we must have that $\lambda_j = 0$ for all $j < m$.
\end{lem}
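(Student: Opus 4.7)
The plan is to unify both cases by choosing $m$ to be the smallest index in $\{1,2,\dots,k\}$ for which $S^m x_i \to 0$ (this always exists because $S^k = 0$, and it satisfies $m \geq 1$ because the $x_i$ are normalised). In Case~1 of the statement, $S^{k-1} x_i \not\to 0$ forces $m = k$, and I would need to show $\lambda_j = 0$ for every $0 \leq j \leq k-1$. In Case~2, $m \leq k-1$, and I would need to show $\lambda_j = 0$ for $j < m$. Both reduce to the same inductive argument below, so I would write it once.

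Since $S^{m-1} x_i \not\to 0$, I would extract a subsequence (relabelled $(x_i)$) and find $\varepsilon > 0$ with $\|S^{m-1} x_i\| \geq \varepsilon$ for every $i$. The key observation is that $S^m x_i \to 0$ together with boundedness of $S$ forces $S^{m+l} x_i \to 0$ for every $l \geq 0$. I would then proceed by induction on $l = 0, 1, \dots, m-1$, showing that $\lambda_l = 0$. Assuming inductively that $\lambda_0 = \dots = \lambda_{l-1} = 0$, we have
\[
\sum_{j=l}^{k-1} \lambda_j S^j x_i \to 0.
\]
Apply $S^{m-1-l}$ to obtain
\[
\lambda_l S^{m-1} x_i \;+\; \sum_{j=l+1}^{k-1} \lambda_j S^{m-1-l+j} x_i \;\to\; 0.
\]
For each $j \geq l+1$ the exponent satisfies $m-1-l+j \geq m$, so $S^{m-1-l+j} x_i = S^{m-1-l+j-m}(S^m x_i) \to 0$. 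This isolates the first term: $\lambda_l S^{m-1} x_i \to 0$. Combined with the uniform lower bound $\|S^{m-1} x_i\| \geq \varepsilon$, we conclude $\lambda_l = 0$.

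There is no real obstacle here; the argument is a straightforward telescoping using the nilpotency $S^k = 0$ together with the chosen definition of $m$. The only small care required is passing to a subsequence along which $\|S^{m-1}x_i\|$ is bounded below (which is legitimate because the hypothesis $\sum_j \lambda_j S^j x_i \to 0$ transfers to any subsequence, and conclusions about the $\lambda_j$ pulled from a subsequence apply to the original scalars as well).
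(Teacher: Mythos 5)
Your argument is correct and takes essentially the same route as the paper: both isolate one coefficient at a time by applying a suitable power of $S$ (your $S^{m-1-l}$) and using that every term whose $S$-exponent is at least $m$ tends to zero, either because $S^m x_i \to 0$ or because $S^k = 0$. The only differences are cosmetic --- you merge the paper's two cases into a single induction via the minimal $m \in \{1,\dots,k\}$, and your subsequence extraction is harmless but unnecessary, since $\lambda_l S^{m-1}x_i \to 0$ with $\lambda_l \neq 0$ would already force $S^{m-1}x_i \to 0$.
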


\begin{proof}
We consider first the case where $S^{k-1} \to 0$ and choose $m \in \{ 1, \dots k-1 \}$ minimal such that $S^mx_i \to 0$ (noting such an $m$ obviously exists). We must observe that $\lambda_j =  0$ for all $j<m$. Since $S^j x_i \to 0$ for all $j \geq m$ we in fact know that $\sum_{j=0}^{m-1} \lambda_j S^j x_i \to 0$. If $m = 1$, this of course implies that $\lambda_0 =  0$ since the sequence $(x_i)$ is normalised and we are done. 

Otherwise, we apply the operator $S^{m-1}$ to the previous limit and deduce that $\lambda_0 S^{m-1} x_i \to 0$ (again making use of the fact that $S^jx_i \to 0$ when $j \geq m$). Since, by choice of $m$, $S^{m-1} x_i \arrownot\to 0$, we must again have $\lambda_0 = 0$, and moreover, $\sum_{j=1}^{m-1} \lambda_j S^j x_i \to 0$. If $m = 2$, then this implies $\lambda_1 Sx_i \to 0$ which implies that $\lambda_1 = 0$ (since $Sx_i \arrownot\to 0$). Otherwise, we apply the operator $S^{m-2}$. A similar argument concludes once again that we must have $\lambda_1 = 0$. Continuing in this way, we get that $\lambda_j = 0$ for all $j < m$ as required.

In the case where $S^{k-1}x_i \arrownot\to 0$, we notice that in particular this implies $S^jx_i \arrownot\to 0$ for every $1 \leq j \leq k-1$. Applying the operators $S^{k-1}, S^{k-2}, \dots S$ sequentially to the limit $\sum_{j=0}^{k-1} \lambda_j  S^jx_i \to 0$ yields first that $\lambda_0 = 0$, then $\lambda_1 = 0$ etc. So $\lambda_j = 0$ for every $j$ as required.
\end{proof}

\begin{thm}
Let $T \colon \X_k \to \X_k$ be a bounded linear operator on $\X_k$. Then there are $\lambda_j \in \R$ ($0 \leq j \leq k-1$) and a compact operator $K: \X_k \to \X_k$ such that $T = \sum_{j=0}^{k-1} \lambda_j S^j + K$.
\end{thm}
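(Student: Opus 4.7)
The plan is to produce scalars $\lambda_0,\ldots,\lambda_{k-1}\in\R$ for which $K := T - \sum_{j=0}^{k-1}\lambda_j S^j$ is compact. By Proposition \ref{RIStoBlock} applied to $K$, together with the standard fact that an operator on a space with an FDD is compact as soon as it sends every bounded block sequence to a null sequence (any bounded sequence in $\X_k$ is, after a subsequence and a null perturbation, a block sequence with respect to $(M_n)$), it is enough to show that $\|Kx_i\|\to 0$ for every RIS $(x_i)$ in $\X_k$.

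First I would extract candidate scalars from a single RIS. Fix a normalized RIS $(x_i)$ and, for each $i$, apply Theorem \ref{EssMainTheorem} to pick coefficients $\lambda_0^{(i)},\ldots,\lambda_{k-1}^{(i)}$ with $\|Tx_i - \sum_j \lambda_j^{(i)} S^j x_i\|\to 0$. Lemma \ref{TechnicalLemma} says that any coefficients $c_j$ making $\sum_j c_j S^j x_i$ tend to zero must vanish on every index where $S^j x_i \not\to 0$; the same linear-algebra content provides a uniform lower bound on $\|\sum_j c_j S^j x_i\|$ in terms of the $|c_j|$ on those indices, so the $\lambda_j^{(i)}$ stay bounded. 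Passing to a subsequence, they converge to limits $\lambda_j \in \R$, with arbitrary choices on the indices where $S^j x_i \to 0$; on such degenerate indices, Lemma \ref{PerturbedRIS} lets us replace $(x_i)$ by a nearby RIS lying in $\Ker S^j$ and extract the remaining $\lambda_j$ by iterating the same procedure inside that smaller subspace.

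The real obstacle is showing that the $\lambda_j$ are forced by $T$ and not by the particular RIS. Given a second normalized RIS $(y_i)$ yielding limits $(\mu_j)$, the plan is to form a third RIS that ``sees'' both $(x_i)$ and $(y_i)$ --- e.g.\ by interleaving them along sufficiently separated ranges so the result is still a RIS up to a constant --- and to apply Theorem \ref{EssMainTheorem} to it. The only way the coefficients extracted from this merged sequence can simultaneously agree with those extracted from each of $(x_i)$ and $(y_i)$ individually is that $\lambda_j = \mu_j$ on every index where the corresponding power of $S$ does not collapse, and Lemma \ref{PerturbedRIS} together with the kernel reduction handles the remaining indices. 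Once the $\lambda_j$ are pinned down independently of the RIS, $Kx_i \to 0$ on every RIS by construction, Proposition \ref{RIStoBlock} extends this to every bounded block sequence, and the FDD perturbation argument above concludes that $K$ is compact. The delicate point is this consistency step: different RIS are killed by different powers of $S$, and the structural lemmas \ref{TechnicalLemma} and \ref{PerturbedRIS} are exactly what is needed to coordinate the extraction of the $\lambda_j$ across all the RIS types.
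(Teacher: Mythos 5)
Your overall architecture is the same as the paper's: reduce compactness to the statement that some fixed $\lambda_0,\dots,\lambda_{k-1}$ work for every RIS (via Proposition \ref{RIStoBlock}), extract index-dependent coefficients from Theorem \ref{EssMainTheorem}, control them using Lemma \ref{TechnicalLemma}, and use Lemma \ref{PerturbedRIS} for the degenerate powers of $S$. The genuine gap is in the consistency step, and interleaving does not fix it. The coefficients supplied by Theorem \ref{EssMainTheorem} depend on the index, and limits can only be extracted along subsequences. For an interleaved sequence $x_1,y_1,x_2,y_2,\dots$ the coefficient attached to an $x$-position and the coefficient attached to a $y$-position are different scalars, so the merged sequence does not force any relation between the limits obtained along the $x$-positions and along the $y$-positions; worse, the subsequence along which extraction is possible (where $\|S^jz_i\|\geq\ve$ on the relevant indices) may consist entirely of $y$-positions, e.g.\ if $Sx_i\to 0$ but $Sy_i\not\to 0$. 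And even in the nondegenerate case, comparing the merged-sequence limits with the $\lambda_j$ you extracted from $(x_i)$ alone means comparing two approximations that hold along possibly disjoint subsequences of $(x_i)$ — which is exactly the problem you are trying to solve, so you cannot subtract and invoke Lemma \ref{TechnicalLemma}. The device that actually links two RIS is to pass to subsequences with alternating, well-separated ranges and form the \emph{sums} $w_i=x_i+y_i$ (still a RIS): now one and the same scalar multiplies $S^jx_i+S^jy_i$, and since $S$ preserves ranges one applies $P_{(0,l_i]}$ and $P_{(l_i,\infty)}$ (after observing $P_{(0,l_i]}Ty_i\to 0$ and $P_{(l_i,\infty)}Tx_i\to 0$, which follows from the individual approximations) to split the three approximate identities and obtain $\sum_j(\mu_j-\lambda_j)S^jx_i\to 0$ and $\sum_j(\mu_j-\lambda_j')S^jy_i\to 0$; only then does Lemma \ref{TechnicalLemma}, together with the freedom to re-choose $\lambda_j$ on indices where $S^jx_i\to 0$, give $\lambda_j=\lambda_j'$. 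This sum-and-project mechanism is what your plan is missing.

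Two smaller points. First, your boundedness argument for the $\lambda_j^{(i)}$ is not valid as stated when some $S^{j}x_i\to 0$: nothing controls the coefficients on those degenerate indices, and their terms $\lambda_j^{(i)}S^jx_i$ need not be negligible, so the lower bound you invoke does not apply to the full sum. The correct order is the paper's: first replace $(x_i)$ by the nearby RIS in $\Ker S^{m_0}$ via Lemma \ref{PerturbedRIS} (so the degenerate terms vanish identically, the coefficients there being irrelevant), and only then bound the remaining coefficients by applying $S^{k-1},S^{k-2},\dots$ successively and using $\|S^jx_i'\|\geq\ve$. Second, your parenthetical justification that block-to-null implies compactness is false as stated (a constant sequence is not a small perturbation of a block sequence); the standard argument instead shows $\|(T-\sum_j\lambda_jS^j)P_{(n,\infty)}\|\to 0$ by approximating the tails $P_{(n,\infty)}u$ in norm by finitely supported vectors and producing a bounded block sequence from a putative counterexample. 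Since the paper also treats this implication as standard, this last point is minor.
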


\begin{proof}
We will show that there exist $\lambda_j$ such that whenever $(x_i)_{i \in \N}$ is a RIS, $Tx_i - \sum_{j=0}^{k-1} \lambda_j S^j x_i \to 0$ as $i \to \infty$. By proposition \ref{RIStoBlock}, this implies $Tx_i - \sum_{j=0}^{k-1} \lambda_j S^j x_i \to 0$ for every block sequence $(x_i)$ which, of course, implies that $T-\sum_{j=0}^{k-1} \lambda_j S^j$ is compact. We note that it is enough to show that there are $\lambda_j \in \R$ such that whenever $(x_i)_{i\in\N}$ is a RIS, we can find some subsequence $(x_{i_l})$ with $Tx_{i_l} - \sum_{j=0}^{k-1}\lambda_j S^j x_{i_l} \to 0$. 

\begin{nclaim} \label{RISSxiNotTo0}
Suppose $(x_i)_{i\in\N}$ is a normalised RIS and that $S^{k-1} x_i \arrownot\to 0$ (noting in particular that this implies that $S^jx_i \arrownot\to 0$ for every $1 \leq j \leq k-1$). Then there are $\lambda_j \in \R$ ($0 \leq j \leq k-1$) and a subsequence $(x_{i_l})$ of $(x_i)$ such that $Tx_{i_l} - \sum_{j=0}^{k-1} \lambda_j S^j x_{i_l} \to 0$. 
\end{nclaim}
By passing to a subsequence, we may assume that there is some $\ve > 0$ such that $\| S^jx_i \| \geq \ve$ for every $i\in \N$ and every $1\leq j \leq k-1$. By theorem \ref{EssMainTheorem}, there are $\lambda^j_i  \in \R$ such that $\| Tx_i - \sum_{j=0}^{k-1} \lambda^j_i S^j x_i \| \to 0$. We first show that the $\lambda^0_i$ must converge. The argument is similar to that of \cite{AH}. If not, by passing to a subsequence, we may assume that $|\lambda^0_{i+1} - \lambda^0_{i}| \ge \delta > 0$ for some $\delta$. Since $y_i := x_{2i-1} + x_{2i}$ is a RIS, we deduce from theorem \ref{EssMainTheorem} that there are $\mu^j_i  \in \R$ with $\|Ty_i - \sum_{j=0}^{k-1} \mu^j_i S^j y_i \| \to 0$. Now
\begin{align*}
\| \sum_{j=0}^{k-1} \left( \lambda^j_{2i} - \mu^j_i \right) S^j x_{2i} &+ \sum_{j=0}^{k-1} \left( \lambda_{2i-1}^j - \mu^j_i \right) S^j x_{2i-1} \|  \\
& \leq \| \sum_{j=0}^{k-1} \lambda_{2i}^{j} S^jx_{2i} - Tx_{2i} \|+ \sum_{j=0}^{k-1} \lambda_{2i-1}^{j} S^jx_{2i-1} - Tx_{2i-1} \| + \| Ty_i - \sum_{j=0}^{k-1} \mu_i^j S^j y_i \| 
\end{align*}
and so we deduce that both sides of the inequality converge to $0$. Since the sequence $(x_i)$ is a block sequence, there exist $l_k$ such
that $P_{(0,l_k]}y_k = x_{2k-1}$ and $P_{(l_k,\infty)}y_k = x_{2k}$ Recalling that if $x \in \X_k$ has $\ran x = (p,q] $ then $\ran S^jx = (p,q]$, we consequently have \[
\| \sum_{j=0}^{k-1} \left( \lambda^j_{2i} - \mu^j_i \right) S^j x_{2i} \| \leq  \| P_{(l_k, \infty)} \| \| \sum_{j=0}^{k-1} \left( \lambda^j_{2i} - \mu^j_i \right) S^j x_{2i} + \sum_{j=0}^{k-1} \left( \lambda_{2i-1}^j - \mu_i^j \right) S^j x_{2i-1} \|  \to 0
\]
and similarly
\[
\| \sum_{j=0}^{k-1} \left( \lambda^j_{2i-1} - \mu^j_i \right) S^j x_{2i-1} \| \to 0
\]
By continuity of $S$ and the fact that $S^k = 0$, applying $S^{k-1}$ to both limits above (and recalling that $\| S^j x_i \| \geq \varepsilon$ for every $1 \leq j \leq k-1$) we obtain 
\[
|\lambda^0_{2i} - \mu^0_{i} | \leq \frac{1}{\varepsilon} \| \left( \lambda^0_{2i} - \mu^0_{i} \right) S^{k-1} x_{2i} \| \leq \frac{1}{\varepsilon} \| S^{k-1} \| \| \sum_{j=0}^{k-1} \left( \lambda_{2i}^j - \mu^j_i \right) S^j x_{2i} \| \to 0
\]
and similarly we find that $|\lambda^0_{2i-1} - \mu^0_{i} | \to 0$. It follows that $| \lambda^0_{2i} - \lambda^0_{2i-1} | \to 0$ contrary to our assumption. So the $\lambda^0_i$ converge to some $\lambda_0$ as claimed. It follows that $\| Tx_i - \lambda_0 x_i - \sum_{j=1}^{k-1} \lambda^j_i  S^jx_i \| \to 0$. We observe that, since $(x_i)$ is normalised and $\| S^{k-1} x_i \| \geq \varepsilon$, applying $S^{k-2}$ to the previous limit, we see that \[
|\lambda^1_i | \leq \frac{1}{\varepsilon} \| \lambda^1_i S^{k-1}x_i \| \leq \| S^{k-2} \| \| Tx_i - \lambda_0 x_i - \sum_{j=1}^{k-1} \lambda^j_i S^jx_i \| + \| S^{k-2}T - \lambda_0 S^{k-2} \| < \infty
\]
so that in particular the $\lambda_i^1$ are bounded. Consequently there is some convergent subsequence $\lambda_{i_l}^1$ (limit $\lambda_1$) of the $\lambda^1_i$. It follows that the corresponding subsequence $(x_{i_l})$ satisfies \[
Tx_{i_l} - \lambda_0 x_{i_l} - \lambda_1 Sx_{i_l} - \sum_{j=2}^{k-1}\lambda^j_{i_l} S^j x_{i_l} \to 0
\]
Now, if $k=2$ we are done (the last sum is empty). Otherwise, we can apply $S^{k-3}$ to the previous limit and use the same argument to conclude that $(\lambda^2_{i_l})_{l=1}^{\infty}$ is a bounded sequence of scalars. Continuing in this way, we eventually find (after passing to further subsequences which we relabel as $x_{i_l}$) that there are $\lambda_j$ with $(T - \sum_{j=0}^{k-1} \lambda_j S^j )x_{i_l} \to 0$ as required.
\begin{nclaim} \label{RISwithSxiTo0}
Suppose $(x_i)_{i \in \N}$ is a normalised $C$-RIS and that $S^m x_i$ converges to $0$ for some $1 \leq m \leq k-1$. Let $m_0 \geq 1$ be minimal such that $S^{m_0} x_i \to 0$. Then there are $\lambda_j \in \R$ ($0 \leq j < m_0$) and a subsequence $(x_{i_l})$ of $(x_i)$ such that $(T-\sum_{j=0}^{m_0 - 1} \lambda_j S^j) x_{i_l} \to 0$. 
\end{nclaim}
By minimality of $m_0$, we can assume (by passing to a subsequence if necessary) that $\| S^j x_i \| \geq \ve$ for all $i\in \N$ and all $j < m_0$.  By lemma \ref{PerturbedRIS}, (with $j = m_0$) there is a $2C$-RIS $(x'_l)_{l \in \N}$ and some subsequence $(x_{i_l})$ of $(x_i)$ such that $x'_l \in \Ker S^{m_0} \subseteq \Ker S^j$ for $j \ge m_0$ and every $l$. Moreover,  $\|x_{i_l} - x'_l \| \to 0$ (as $l \to \infty$). By theorem \ref{EssMainTheorem}, there are $\lambda^j_l$ s.t \[
\| Tx'_l - \sum_{j=0}^{k-1}\lambda^j_l S^j x'_l  \| = \| Tx'_l - \sum_{j=0}^{m_0-1} \lambda^j_l S^j x'_l \| \to 0
\]
We claim the $\lambda^0_l$ must converge to some $\lambda_0$. The argument is the same as that used in claim \ref{RISSxiNotTo0}, except now we obtain \[
\| \sum_{j=0}^{m_0-1} \left( \lambda^j_{2l-1} - \mu^j_l \right) S^j x'_{2l-1} \| \to 0 \text{ and } \| \sum_{j=0}^{m_0-1} \left( \lambda^j_{2l} - \mu^j_l \right) S^j x'_{2l} \| \to 0
\] 
(We note there are no terms of the form `$S^jx$' when $j \geq m_0$ since the RIS $(y_k)$ defined by $y_k := x_{2k-1}' + x_{2k}'$ also lies in $\Ker S^{m_0}$). We apply $S^{m_0-1}$, noting that $S^j x_l' = 0$ for every $l$ and $j \geq m_0$ and it follows as before that $| \lambda^0_{2j} - \lambda_{2j-1}^0 | \to 0$.
It easily follows that $Tx'_k - \lambda_0 x'_l - \sum_{j=1}^{m_0-1} \lambda^j_l S^j x'_l \to 0$. We use the same argument as above to show that the sequences $(\lambda^j_l)_{l=1}^{m_0-1}$ all have convergent subsequences and consequently that we can find some subsequence $x'_{l_r}$ with $(T - \sum_{j=0}^{m_0 - 1}\lambda_j S^j )x'_{l_r} \to 0$. It follows that
\begin{align*}
\| Tx_{i_{l_r}} - \sum_{j=0}^{m_0-1} \lambda_j S^j x_{i_{l_r}} \| &\leq \| \big( T - \sum_{j=0}^{m-1} \lambda_j S^j  \big)(x_{i_{l_r}} - x'_{l_r})\| + \| \big( T - \sum_{j=0}^{m_0-1} \lambda_j S^j \big)(x'_{l_r}) \| \\
&\leq \| T - \sum_{j=0}^{m_0-1}\lambda_j S^j  \| \|x_{i_{l_r}} - x'_{l_r} \| + \| \big( T - \sum_{j=0}^{m_0-1} \lambda_j S^j \big)(x'_{l_r}) \| \to 0
\end{align*}
A priori, the $\lambda_j$ found in claims \ref{RISSxiNotTo0} and \ref{RISwithSxiTo0} may depend on the RIS. We see now that this is not the case.
\begin{nclaim} \label{sameLambdaAndMuWork}
There are $\lambda_j \in \R$ ($0 \leq j \leq k-1$) such that whenever $(x_i)_{i\in\N}$ is a RIS, there is a subsequence $(x_{i_l})$ of $(x_i)$ such that $Tx_{i_l} - \sum_{j=0}^{k-1}\lambda_j S^j x_{i_l} \to 0$.
\end{nclaim}
If $(x_i)$ is a RIS with some some subsequence converging to $0$ then any $\lambda_j$ can be chosen satisfying the conclusion of the claim. So it is sufficient to only consider normalised RIS. Let $(x_i)_{i\in\N}, (x_i')_{i\in\N}$ be normalised RIS. It follows from claims \ref{RISSxiNotTo0} and \ref{RISwithSxiTo0} that, after passing to subsequences (and relabelling), there are $\lambda_j, \lambda_j'  \in \R$ with 
\begin{equation}\label{eqn1}
Tx_i - \sum_{j=0}^{k-1} \lambda_jS^j x_i \to 0 \tag{1}
\end{equation}
and
\begin{equation} \label{eqn2}
Tx_i' - \sum_{j=0}^{k-1} \lambda'_j S^j x_i' \to 0 \tag{2}
\end{equation}
To prove the claim, we must see its possible to arrange that $\lambda_j = \lambda_j'$ for every $j$. We pick natural numbers $i_1 < i_2 < \dots$ and $j_1 < j_2 \dots$ such that $\text{max } \ran x_{i_k} < \text{min } \ran x'_{j_k} \leq \text{max } \ran x'_{j_k} < \text{min } \ran x_{i_{k+1}}$ for every $k$ and such that the sequence $(x_{i_k} + x'_{j_k} )_{k\in\N}$ is again a RIS. For notational convenience, we (once again) relabel the subsequences $(x_{i_k})$, $(x_{j_k}')$ by $(x_i)$ and $(x_i')$. So (by choice of the subsequences) $(x_i + x'_i)_{i \in \N}$ is a RIS and there are natural numbers $l_i$ such that $P_{(0, l_i]} (x_i+ x'_i) = x_i$ and $P_{(l_i, \infty)} (x_i + x'_i) = x'_i$. It follows again from claims  \ref{RISSxiNotTo0} and \ref{RISwithSxiTo0} that there are $\mu_j$ and a subsequence $(x_{i_m} + x_{i_m}')$ such that 
\begin{equation} \label{eqn3}
T(x_{i_m} +x'_{i_m}) - \sum_{j=0}^{k-1} \mu_j S^j (x_{i_m} + x'_{i_m} ) \to 0 \tag{3}
\end{equation}
We note aso that 
\begin{align*}
P_{(0, l_i] }Tx_i' &= P_{(0, l_i]}\big( Tx_i' - \sum_{j=0}^{k-1} \lambda_j' S^j x_i' \big) + \sum_{j=0}^{k-1} \lambda_j' P_{(0, l_i]} S^jx_i'  \\
&= P_{(0, l_i]}\big( Tx_i' - \sum_{j=0}^{k-1} \lambda_j' S^jx_i'  \big)  \to 0
\end{align*}
and similarly, $P_{(l_i, \infty)} Tx_i \to 0$. Passing to the appropriate subsequences of equations \eqref{eqn1} and \eqref{eqn2} and substracting them from equation \eqref{eqn3} we see that 

\begin{equation} \label{eqn4}
Tx_{i_m}' - \sum_{j=0}^{k-1}(\mu_j - \lambda_j) S^j x_{i_m} - \sum_{j=0}^{k-1} \mu_j S^j x_{i_m}'  \to 0 \tag{4}
\end{equation}
and
\begin{equation} \label{eqn5}
Tx_{i_m} - \sum_{j=0}^{k-1}(\mu_j - \lambda'_j) S^j x'_{i_m} - \sum_{j=0}^{k-1} \mu_j S^j x_{i_m}  \to 0 \tag{5}
\end{equation}
Finally we apply the projections $P_{(0, l_{i_m}]}$ and $P_{(l_{i_m}, \infty)}$ to equations \eqref{eqn4} and \eqref{eqn5} respectively to obtain (using the above observations) that \[
 \sum_{j=0}^{k-1}(\mu_j - \lambda_j) S^j x_{i_m} \to 0 \text{ and } \sum_{j=0}^{k-1}(\mu_j - \lambda'_j) S^j x'_{i_m}  \to 0
\]

We now consider three cases:
\begin{enumerate}[(i)]
\item $S^{k-1}x_{i_m} \arrownot\to 0$ and $S^{k-1} x'_{i_m} \arrownot\to 0$. By lemma \ref{TechnicalLemma} and the two limits above, we see that we must have $\lambda_j = \mu_j = \lambda'_j$ for every $j$ as required.
\item There is some $1 \leq r \leq k-1$ such that $S^r x_{i_m} \to 0$, but $S^j x_{i_m} \arrownot\to 0$ for any $j < r$ and $S^{k-1}x_{i_m}' \arrownot\to 0$ (or the same but with $x_{i_l}$ and $x_{i_m}'$ interchanged). Again, by lemma \ref{TechnicalLemma}, we must have $\lambda_j' = \mu_j$ for every $j$, and $\lambda_j = \mu_j$ for all $j < r$. For $j \geq r$ we might as well assume that the $\lambda_j$ were chosen to be equal to $\lambda_j'$ since if $S^j x_{i_m} \to 0$ we can replace $\lambda_j$ by any scalar and equation (1) still holds (after passing to an appropriate subsequennce).
\item There are $1 \leq q , r \leq k-1$ with $S^q x_{i_m} \to 0$, $S^r x'_{i_m} \to 0$ but $S^j x_{i_m} \arrownot\to 0$ if $j < q$ and $S^j x'_{i_m} \arrownot\to 0$ if $j < r$. Without loss of generality we assume $q \leq r$. By the same argument as in case (ii), we can assume that $\lambda_j$ are chosen such that $\lambda_j = \lambda_j'$ whenever $j \geq q$. For $j < q \leq r$ we must have $\lambda_j = \mu_j = \lambda'_j$ by lemma \ref{TechnicalLemma}.
\end{enumerate}

In all possible cases, we have seen that we can arrange $\lambda_j = \lambda_j'$ for all $j$. This completes the proof of the claim and thus (as noted earlier), the proof. 
\end{proof}

\section{Strict Singularity of $S\colon \X_k \to \X_k$}

We see now hat $S$ is strictly singular. It is is enough to see that $S$ is not an isomorphism when restricted to any infinite dimensional block subspace $Z$ of $\X_k$. 

To establish the strict singularity of $S$, we begin by stating a result taken from the paper of Argyros and Hadyon (\cite{AH}, corollary 8.5). The reader can check that the same proofs as given in \cite{AH} will also work in the space $\X_k$ constructed here.

\begin{lem}\label{ExistRIS}
Let $Z$ be a block subspace of $\X_k$, and let $C>2$ be a real number.
Then $Z$ contains a normalized $C$-RIS.
\end{lem}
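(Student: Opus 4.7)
I would construct the $C$-RIS $(x_n)_{n \in \N}$ and its associated sequence of integers $(j_n)$ by recursion. Suppose $x_1, \dots, x_{n-1}$ and $j_1 < \dots < j_{n-1}$ have been chosen to satisfy the conditions of Definition \ref{RISDef}; let $p = \max \ran x_{n-1}$ (with $p = 0$ if $n=1$) and observe that $Z' := Z \cap \bigoplus_{l > p} M_l$ is still an infinite-dimensional block subspace of $\X_k$. The problem then reduces to the following: show that for every infinite-dimensional block subspace $Z' \subseteq \X_k$ and every sufficiently large $j \in \N$, there is a normalised vector $x \in Z'$ with $|x(\gamma)| \leq Cm_i^{-1}$ whenever $\weight \gamma = m_i^{-1}$ and $i < j$. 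Setting $x_n = x$ and $j_n = j$ (taking $j > p$) then completes the inductive step.

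\textbf{Producing such an $x$.} I would follow the averaging strategy of Section~8 of \cite{AH}. The plan is, first, to extract from $Z'$ a normalised finite block sequence $(y_l)_{l=1}^{n_j}$ which behaves like the unit vector basis of $\ell_1^{n_j}$ up to a factor of $\tfrac{1}{2}$, that is, $\|\sum_l a_l y_l\| \geq \tfrac{1}{2} \sum_l |a_l|$ for all scalars $a_l$. Setting $w = n_j^{-1} \sum_{l=1}^{n_j} y_l$ then gives $\|w\| \geq \tfrac{1}{2}$, while Proposition~\ref{AHProp5.6} applied to the $C_0$-RIS $(y_l)$ (with $C_0$ close to $2$, say $C_0 < C/2$) yields the flatness estimate $|w(\gamma)| \leq 16 C_0 m_j^{-1} m_h^{-1}$ for every $\gamma$ with $\weight \gamma = m_h^{-1}$ and $h < j$. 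Normalising gives $x = w/\|w\|$ with $|x(\gamma)| \leq 32 C_0 m_j^{-1} m_h^{-1}$, which, once $j$ is taken large enough that $32 C_0 m_j^{-1} \leq C$, is bounded by $C m_h^{-1}$ as required.

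\textbf{Main obstacle.} The delicate step is the extraction of the approximate $\ell_1^{n_j}$-average $(y_l)$ in an arbitrary block subspace $Z'$. This is done by the standard iteration argument: one starts with an arbitrary normalised block sequence in $Z'$ and repeatedly forms averages; if at any level the approximate $\ell_1^{n_j}$-lower-estimate failed, the upper bound $\|n_j^{-1}\sum y_l\| \leq 10C m_j^{-1}$ from Proposition~\ref{AHProp5.6} would force iterated averages to have norms tending to $0$, contradicting the fact that a block subspace of the $\mathscr L_\infty$-space $\X_k$ contains normalised vectors at every stage. The combinatorial input here is identical to that of \cite{AH}: since the estimates of Section~5 of \cite{AH} were noted above to transfer verbatim to $\X_k$ (modulo replacing $p_0 = 0$ by a general $p_0$ in the evaluation analysis), the entire argument carries over without modification, which is why the author simply invokes the result.
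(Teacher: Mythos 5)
Your overall skeleton (a recursive construction, reducing the problem to producing, in every tail block subspace and for arbitrarily large $j$, a normalised vector flat against all weights $m_i^{-1}$ with $i<j$, via $\ell_1^{n_j}$-averages as in Section~8 of \cite{AH}) is indeed the shape of the argument the paper is silently citing. But the step in which you produce the flat vector is broken. You apply Proposition~\ref{AHProp5.6} to ``the $C_0$-RIS $(y_l)$'', yet nothing in your extraction makes $(y_l)$ a RIS: what the iteration argument yields is a normalised block sequence satisfying a lower $\ell_1^{n_j}$-estimate, whereas Proposition~\ref{AHProp5.6} has the RIS property as a hypothesis, and the existence of RIS's in an arbitrary block subspace is exactly the statement of Lemma~\ref{ExistRIS} being proved --- so the appeal is circular. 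Worse, the two properties you demand of $(y_l)$ are mutually exclusive for large $j$: if $(y_l)_{l=1}^{n_j}$ really were a $C_0$-RIS, part (1) of Proposition~\ref{AHProp5.6} itself forces $\|n_j^{-1}\sum_l y_l\|\le 10C_0m_j^{-1}$, which contradicts your lower bound $\|w\|\ge\tfrac12$ as soon as $m_j>20C_0$; since your recursion requires $j=j_n>\max\ran x_{n-1}\to\infty$, all but finitely many steps take place in the regime where no such $(y_l)$ can exist. The same confusion appears in your ``main obstacle'' paragraph: the contradiction in the averaging iteration is not with ``the existence of normalised vectors at every stage'' nor with the upper bound of Proposition~\ref{AHProp5.6}; it is the quantitative clash between the geometric decay $2^{-k}$ of iterated averages (under the assumption that no $2$-$\ell_1^{n_j}$-average exists) and the lower estimate of Lemma~\ref{AHProp4.8}, $\|\sum_r x_r\|\ge\tfrac12 m_{2j}^{-1}\sum_r\|x_r\|$ for skipped block sequences, combined with the growth conditions of Assumption~\ref{mnAssump}.

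The argument the paper actually imports from \cite{AH} (Corollary 8.5 there) separates the two ingredients you have conflated. First, every block subspace contains, for every $j$, a normalised $2$-$\ell_1^{n_j}$-average; this uses only the iteration plus Lemma~\ref{AHProp4.8}. Second, the flatness of such an average $x$ against a functional $e^*_\gamma$ with $\weight\gamma=m_i^{-1}$, $i<j$, is proved \emph{directly} from the evaluation analysis of Proposition~\ref{EvalAnal}: $\gamma$ has age $a\le n_i\ll n_j$, so the at most $n_i$ interval projections $P_{(p_{r-1},p_r)}$ occurring in $e^*_\gamma$ can cut only a small proportion of the $n_j$ blocks forming the average, and a counting estimate gives $|x(\gamma)|\le C'm_i^{-1}$ with $C'$ close to $2$ --- no RIS hypothesis, and hence no use of Proposition~\ref{AHProp5.6}, enters at this stage. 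Finally, a skipped-block sequence of such averages, with parameters $j_k$ increasing fast enough that $j_{k+1}>\max\ran x_k$, is a normalised $C$-RIS for any $C>2$. If you replace your Proposition~\ref{AHProp5.6} step by this direct counting lemma (and also note, as the paper does elsewhere, that the only change from \cite{AH} is that $p_0$ need not be $0$ in the evaluation analysis), your outline becomes the intended proof.
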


We will need a variation of lemma \ref{RISZeroSpecialExact} to be able to construct weak dependent sequences. We first observe that the lower norm estimate for skipped block sequences given in proposition 4.8 of \cite{AH} also holds in the space $\X_k$ and exactly the same proof works. We state it for here for convenience:

\begin{lem} \label{AHProp4.8}
Let $(x_r)_{r=1}^a$ be a skipped block
sequence in $\X_k$.  If $j$ is a
positive integer such that $a\le n_{2j}$ and $2j<\min\ran x_2$, then
there exists an element $\gamma$ of weight $m_{2j}^{-1}$ satisfying
\begin{align*}
\sum_{r=1}^a x_r(\gamma)&\ge \half m_{2j}^{-1} \sum_{r=1}^a
\|x_r\|.\\
\intertext{Hence} \|\sum_{r=1}^a x_r\|&\ge \half m_{2j}^{-1}
\sum_{r=1}^a \|x_r\|.
\end{align*}
\end{lem}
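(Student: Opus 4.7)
The plan is to build explicitly an element $\gamma\in\Delta'_{q_a}$ of weight $m_{2j}^{-1}$ and age $a$ whose evaluation analysis is tailored to the skipped-block sequence $(x_r)_{r=1}^a$, in direct analogy with the proof of Proposition~4.8 of \cite{AH}. Once $\gamma$ is produced, Proposition \ref{EvalAnal} will express $e_\gamma^*$ as a sum of $d$-functionals (which annihilate every $x_s$ by the skipped-block property) and weighted $P^*$-projections of norming-type functionals $b_r^*$, and the required lower estimate will drop out immediately.

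First, I would use the skipped-block hypothesis to produce natural numbers $0=q_0<q_1<\cdots<q_a$ with $\ran x_r\subseteq(q_{r-1},q_r)$ (open intervals). For each $r$, by Hahn--Banach (together with the duality between $\X_k$ and $\ell_1(\Gamma)$) there exists $\beta_r^*\in\ball\ell_1(\Gamma_{q_r-1}\setminus\Gamma_{q_{r-1}})$ with $\langle\beta_r^*,x_r\rangle=\|x_r\|$; the support restriction is legitimate because $x_r$ lives inside $\bigoplus_{n\in(q_{r-1},q_r)}M_n$. Using that $B_{q_{r-1},q_r-1}$ is a $2^{-(q_r-1)}$-net, replace $\beta_r^*$ by $b_r^*\in B_{q_{r-1},q_r-1}$ with $\langle b_r^*,x_r\rangle\ge(1-2^{-(q_r-1)})\|x_r\|$.

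Next I would construct the $\xi_r\in\Delta'_{q_r}$ recursively: set $\xi_1=(q_1,q_0,m_{2j}^{-1},b_1^*)$ and, for $2\le r\le a$, put $\xi_r=(q_r,\xi_{r-1},m_{2j}^{-1},b_r^*)$. The hypothesis $2j<\min\ran x_2$ forces $2j\le q_1$, so the age-$1$ element $\xi_1$ satisfies $j\le\lfloor q_1/2\rfloor$ and lies in the even-weight clause of Definition~\ref{DefnOfGammaAndSpace}; the recursion gives $\age\xi_r=r\le a\le n_{2j}$, so each subsequent $\xi_r$ also lies in $\Delta'_{q_r}$. Put $\gamma=\xi_a$. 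Proposition \ref{EvalAnal} then yields
\[
e_\gamma^*=\sum_{r=1}^a d^*_{\xi_r}+m_{2j}^{-1}\sum_{r=1}^a P^*_{(q_{r-1},q_r)}b_r^*.
\]
Because $\ran x_s\subseteq(q_{s-1},q_s)$ is an \emph{open} interval and $\rank\xi_r=q_r$, one has $\langle d^*_{\xi_r},x_s\rangle=0$ for all $r,s$; moreover $P_{(q_{r-1},q_r)}x_s=x_s$ when $r=s$ and $0$ otherwise. Summing,
\[
\sum_{s=1}^a x_s(\gamma)=m_{2j}^{-1}\sum_{r=1}^a\langle b_r^*,x_r\rangle\ge m_{2j}^{-1}\sum_{r=1}^a\bigl(1-2^{-(q_r-1)}\bigr)\|x_r\|\ge\tfrac12 m_{2j}^{-1}\sum_{r=1}^a\|x_r\|,
\]
(since $q_1\ge 2j\ge 2$ makes the geometric error negligible), and the norm bound follows because $\|e_\gamma^*\|_\infty\le 1$ as a functional on $\X_k$.

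The main obstacle is the bookkeeping confirming that each $\xi_r$ lies in the restricted set $\Delta'_{q_r}$ rather than merely in $\Delta_{q_r}$; this amounts to checking the even-weight index restriction ($j\le\lfloor q_r/2\rfloor$), the age bound $\age\xi_r\le n_{2j}$, and that every $b_r^*$ lies in $B_{q_{r-1},q_r-1}\cap\ell_1(\Gamma)$. All three follow from the skipped-block structure, the hypothesis $a\le n_{2j}$, and the fact that the $x_r$ are already in $\X_k$, so that any norming functional for $x_r$ produced by Hahn--Banach is automatically supported in $\Gamma$.
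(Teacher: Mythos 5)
Your overall strategy is the right one (it is exactly the argument of Proposition 4.8 of \cite{AH}, which is what the paper invokes): build $\xi_1=(q_1,q_0,m_{2j}^{-1},b_1^*)$, $\xi_r=(q_r,\xi_{r-1},m_{2j}^{-1},b_r^*)$, take $\gamma=\xi_a$, and use Proposition \ref{EvalAnal} together with the skipped-block structure so that only the diagonal terms $m_{2j}^{-1}\langle b_r^*,x_r\rangle$ survive. The admissibility bookkeeping ($\age\xi_r=r\le n_{2j}$, $b_r^*$ supported in $\Gamma_{q_r-1}\setminus\Gamma_{q_{r-1}}$, weight index) is fine, except that $2j\le q_1$ is not ``forced'' by the hypothesis: you must \emph{choose} $q_1$ large enough (e.g.\ $q_1=\min\ran x_2-1$), which the hypothesis $2j<\min\ran x_2$ permits.

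The genuine gap is the norming step. Hahn--Banach does \emph{not} give $\beta_r^*\in\ball\ell_1(\Gamma_{q_r-1}\setminus\Gamma_{q_{r-1}})$ with $\langle\beta_r^*,x_r\rangle=\|x_r\|$. The norm of $x_r$ is its sup over \emph{all} of $\Gamma$, and a vector with $\ran x_r\subseteq(q_{r-1},q_r)$ is the extension $i_{q_r-1}\bigl(x_r|_{\Gamma_{q_r-1}}\bigr)$, whose sup norm can exceed $\sup_{\gamma\in\Gamma_{q_r-1}}|x_r(\gamma)|$ by up to the factor $M=(1-2\theta)^{-1}=2$ of Theorem \ref{BDThm}; any functional supported in $\Gamma_{q_r-1}$ with $\ell_1$-norm at most $1$ can therefore only be guaranteed to give $\langle b_r^*,x_r\rangle\ge\tfrac12\|x_r\|$, not $(1-2^{-(q_r-1)})\|x_r\|$. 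This is precisely where the constant $\tfrac12$ in the statement comes from, and your version manufactures a better constant from an unjustified premise. The correct (and simpler) step, as in \cite{AH}: since $x_r$ vanishes on $\Gamma_{q_{r-1}}$ and $\|x_r\|\le 2\max_{\gamma\in\Gamma_{q_r-1}}|x_r(\gamma)|$, pick $\gamma_r\in\Gamma_{q_r-1}\setminus\Gamma_{q_{r-1}}$ and a sign $\epsilon_r=\pm1$ with $\epsilon_r x_r(\gamma_r)\ge\tfrac12\|x_r\|$, and set $b_r^*=\epsilon_r e^*_{\gamma_r}$, which lies in $B_{q_{r-1},q_r-1}\cap\ell_1(\Gamma_{q_r-1}\setminus\Gamma_{q_{r-1}})$ exactly, so no net approximation is needed. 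Note also that your $2^{-n}$-net approximation is not a harmless patch: once the norming constant is corrected to $\tfrac12$, any additional loss would yield only $(\tfrac12-\varepsilon_r)\|x_r\|$ and the stated inequality with constant exactly $\tfrac12$ would no longer follow.
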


\begin{lem} \label{WeakExactPairs}
Let $\ve > 0$, $j$ be a positive integer and let $(x_i)_{i=1}^{n_{2j}}$ be a
skipped-block $C$-RIS, such that $\min\ran x_2 > 2j$. Suppose further that one of the following hypotheses holds:
\begin{enumerate}[(i)]
\item $\|S^{k-1} x_i\|\ge \delta$ for all $i$ (some $\delta > 0$)
\item There is some $2 \leq m \leq k-1$ (where we are, of course, assuming here that $k > 2$) such that $\| S^{m-1} x_i \| \geq \delta $ for all $i$ (some $\delta > 0$) and $\| S^m x_i \| \leq Cm_{2j}^{-1} \ve$.
\end{enumerate}
Then there exists $\eta\in \Gamma$ such that $x(\eta) \geq \frac{\delta}{2}$ where $x$ is the weighted
sum
$$
 x=m_{2j}n_{2j}^{-1}\sum_{i=1}^{n_{2j}}x_i.
$$
Moreover, if hypothesis (i) above holds, the pair $(Sx, \eta)$ is a $(16C, 2j, 0)$-special exact pair. Otherwise, hypothesis (ii) holds and $(Sx, \eta)$ is a $(16C, 2j, 0, \ve)$ weak exact pair.
\end{lem}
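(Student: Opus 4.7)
The plan is to apply proposition \ref{AHProp4.8} not to the sequence $(x_i)$ itself, but to the shifted sequence $(S^{l-1} x_i)_{i=1}^{n_{2j}}$, where $l=k$ under hypothesis (i) and $l=m$ under hypothesis (ii). By remark \ref{SofRISisRIS} applied iteratively, this shifted sequence is again a skipped-block $C$-RIS (since $S$ preserves ranges of vectors and the RIS property passes through $S$), and because $\min \ran (S^{l-1} x_2) = \min \ran x_2 > 2j$, the hypotheses of proposition \ref{AHProp4.8} are met. Under either hypothesis we have $\|S^{l-1} x_i\| \geq \delta$ for all $i$, so proposition \ref{AHProp4.8} produces $\gamma \in \Gamma$ of weight $m_{2j}^{-1}$ satisfying
\[
\sum_{i=1}^{n_{2j}} S^{l-1}x_i(\gamma) \;\geq\; \tfrac{1}{2}\, m_{2j}^{-1} \sum_{i=1}^{n_{2j}} \|S^{l-1} x_i\| \;\geq\; \tfrac{1}{2}\, m_{2j}^{-1} n_{2j}\, \delta,
\]
which, after multiplying by $m_{2j} n_{2j}^{-1}$, reads $S^{l-1}x(\gamma) \geq \delta/2$.

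I then set $\eta := F^{l-1}(\gamma)$; this element must be checked to lie in $\Gamma$. Iterating the formula $S^* e^*_\zeta = e^*_{F(\zeta)}$ (interpreted as $0$ whenever $F(\zeta)$ is undefined, as stated in proposition \ref{S^*construction}) yields $(S^*)^{l-1} e^*_\gamma = e^*_{F^{l-1}(\gamma)}$ when the latter is defined and $0$ otherwise. The inequality
\[
0 \;<\; S^{l-1}x(\gamma) \;=\; \langle x,\, (S^*)^{l-1} e^*_\gamma \rangle
\]
rules out $(S^*)^{l-1} e^*_\gamma = 0$, so $F^{l-1}(\gamma)$ is defined. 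The resulting identity $e^*_\eta = (S^*)^{l-1} e^*_\gamma$ immediately gives
\[
x(\eta) \;=\; \langle x,\, (S^*)^{l-1} e^*_\gamma\rangle \;=\; S^{l-1} x(\gamma) \;\geq\; \delta/2.
\]

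For the special/weak exact pair assertion, the weight of $\eta$ equals $m_{2j}^{-1}$ since $F$ preserves weight (theorem \ref{R^*andGConstruction}(2)). The norm bound $\|Sx\| \leq \|x\| \leq 10C$ follows from proposition \ref{AHProp5.6} combined with $\|S\|\leq 1$, and the estimate $|d^*_\xi(Sx)| \leq 16C m_{2j}^{-1}$ follows from the skipped-block structure together with standard growth bounds on $m_{2j}^2/n_{2j}$ coming from assumption \ref{mnAssump}. The upper estimates $|Sx(\eta')|$ for $\weight \eta' = m_i^{-1}\neq m_{2j}^{-1}$ come from applying proposition \ref{AHProp5.6} to the $C$-RIS $(Sx_i)$. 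The key new computation is condition (4) for $(Sx, \eta)$. Using $e^*_\eta = (S^*)^{l-1}e^*_\gamma$,
\[
S^p(Sx)(\eta) \;=\; \langle x,\, (S^*)^{p+l} e^*_\gamma\rangle \;=\; S^{p+l} x(\gamma) \qquad (0 \leq p \leq k-1),
\]
where the right-hand side is $0$ whenever $F^{p+l}(\gamma)$ is undefined. The evaluation analysis of $\gamma$ combined with the skipped-block property simplifies this to $n_{2j}^{-1}\sum_i \langle S^{p+l}x_i, b^*_i\rangle$, where $(q_i, b_i^*, \xi_i)$ is the analysis of $\gamma$ and $\|b_i^*\|_1 \leq 1$. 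In case (i) with $l=k$, corollary \ref{R^k=0} gives $S^{p+k}x_i = 0$, so all these quantities vanish and $(Sx, \eta)$ is a $(16C, 2j, 0)$-special exact pair. In case (ii) with $l=m$, the bound $\|S^{p+m}x_i\| \leq \|S^m x_i\| \leq Cm_{2j}^{-1}\ve$ yields $|S^p(Sx)(\eta)| \leq Cm_{2j}^{-1}\ve \leq 16C\ve$, so $(Sx, \eta)$ is a $(16C, 2j, 0, \ve)$-weak exact pair.

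The only subtle step is verifying that $F^{l-1}(\gamma)$ is defined; this is handled transparently by the duality identity $(S^*)^{l-1}e^*_\gamma = e^*_{F^{l-1}(\gamma)}$, since the lower bound on $S^{l-1}x(\gamma)$ produced by proposition \ref{AHProp4.8} forces the left-hand side to be nonzero. Everything else amounts to reorganising the familiar RIS estimates around the relation $e^*_\eta = (S^*)^{l-1}e^*_\gamma$.
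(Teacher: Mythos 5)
Your argument is essentially the paper's own proof: apply the skipped-block lower estimate (lemma \ref{AHProp4.8}) to the shifted $C$-RIS $(S^{l-1}x_i)$ with $l=k$ or $l=m$, set $\eta=F^{l-1}(\gamma)$ (defined because $S^{l-1}x(\gamma)\ge\delta/2>0$ forces $(S^*)^{l-1}e^*_\gamma\neq 0$), and check conditions (3) and (4)/(4$'$) through the duality identity $S^p(Sx)(\eta)=S^{p+l}x(\gamma)$ together with $S^k=0$ in case (i) or the hypothesis on $\|S^m x_i\|$ in case (ii), the remaining conditions being the standard RIS estimates transferred to $Sx$. The only (harmless) slip is the intermediate claim $|S^p(Sx)(\eta)|\le Cm_{2j}^{-1}\ve$ in case (ii), which drops the weight factor $m_{2j}$ in $x$; the correct bound is $C\ve$, which still gives the required $\le 16C\ve$.
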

\begin{proof}
Let us consider first the case where hypothesis (i) holds. Since $(S^{k-1}x_i)_{i=1}^{n_{2j}}$ is a skipped block sequence, it follows from \ref{AHProp4.8} that there is an element $\gamma \in \Gamma$ of weight $m_{2j}^{-1}$ satisfying \[
m_{2j}n_{2j}^{-1} \sum_{i=1}^{n_{2j}} S^{k-1}x_i (\gamma) \ge \half n_{2j}^{-1} \sum_{i=1}^{n_{2j}} \|S^{k-1}x_i\| \ge \frac{\delta}{2}
\]
Consequently, we must have $F^{k-1}(\gamma)$ being defined, and $x(F^{k-1}(\gamma)) \ge \frac{\delta}{2}$. We set $\eta = F^{k-1}(\gamma) \in \Gamma$. Certainly $S^jSx(\eta) = \langle x, (S^*)^{j+1} e_{\eta}^* \rangle = 0$ for any $j \geq 0$(since, by lemma \ref{G^kalwaysundefined}, we must have $F(\eta) = F^{k}(\gamma)$ being undefined. So conditions 3 and 4 are satisfied for $(Sx, \eta)$ to be a $(16C, 2j, 0)$-special exact pair. The other conditions are satisfied since we know (by lemma \ref{RISZeroSpecialExact}) that they are satisfied for $x$, and the fact that for any $\theta \in \Gamma$ \[
 \langle Sx , e_{\theta}^* \rangle = \begin{cases} 0 & \text{ if $\theta \in \Gamma^1$ } \\ \langle x, e_{F(\theta)}^*\rangle & \text{ otherwise } \end{cases}
 \]
 and similarly
 \[
 \langle Sx , d_{\theta}^* \rangle = \begin{cases} 0 & \text{ if $\theta \in \Gamma^1$ } \\ \langle x, d_{F(\theta)}^*\rangle & \text{ otherwise } \end{cases}
 \]
 In the case where hypothesis (ii) holds, we find by the same argument as above that there is a $\gamma \in \Gamma$ of weight $m_{2j}^{-1}$ with $F^{m-1}(\gamma)$ being defined and $x(F^{m-1}(\gamma)) \geq \frac{\delta}{2}$. We now set $\eta = F^{m-1}(\gamma)$. Now, for any $0 \leq j \leq k-1$, either $S^j Sx (\eta) = 0$ (if $F^{j+1}(\eta) = F^{j+m}(\gamma)$ is undefined) or $|S^j Sx(\eta)| = |x(F^{j+m}(\gamma))| = |S^{j+m}x (\gamma)| \leq \| S^{j+m}x \| \leq m_{2j}n_{2j}^{-1}\sum_{i=1}^{n_2j} \| S^{j+m} x_i \| \leq C\ve$. The final inequality here is a consequence of the hypothesis when $j = 0$, and then a consequence of the fact that $S$ has norm at most $1$ for $j >0$. So certainly conditions (3) and (4$'$) hold for $(Sx, \eta)$ to be a $(16C, 2j, \ve)$ weak exact pair. The remaining conditions hold once again because they hold for $x$.
\end{proof}
\begin{thm} \label{SisStrictlySingular}
The operator $S\colon\X\to\X$ is strictly singular.
\end{thm}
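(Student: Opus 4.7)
The plan is a contradiction argument. Suppose $S$ is not strictly singular. By a standard perturbation, we may extract a block subspace $Z\subseteq\X_k$ on which $S$ is bounded below: $\|Sz\|\ge\delta\|z\|$ for every $z\in Z$. The contradiction will arise by constructing, for each sufficiently large $j_0$, a weak $(16C,2j_0-1,0)$-dependent sequence $(y_i)_{i=1}^{n_{2j_0-1}}$ with $y_i=Sw_i$ and $w_i\in Z$. Writing $w=n_{2j_0-1}^{-1}\sum_i w_i\in Z$, the average $y:=n_{2j_0-1}^{-1}\sum_i y_i$ then coincides with $Sw$, and satisfies
\[
\|y\|\le 1120\,C\,m_{2j_0-1}^{-2}\quad\text{(by Proposition \ref{0DepSeqUpperEst})}
\]
while, using $\|w_i\|\ge \tfrac12$ together with Lemma \ref{AHProp4.8},
\[
\|y\|=\|Sw\|\ge\delta\|w\|\ge \tfrac{\delta}{4}\,m_{2j_0-1}^{-1}.
\]
Since $m_{2j_0-1}\to\infty$, these estimates collide once $j_0$ is chosen large enough.

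To produce each individual weak exact pair from within $Z$, begin with a normalized $C$-RIS $(x_i)\subseteq Z$ supplied by Lemma \ref{ExistRIS}. Let $m\in\{1,\dots,k\}$ be minimal such that $\|S^m x_i\|\to 0$ along some subsequence (set $m=k$ if no such $m$ exists). The hypothesis $\|Sx_i\|\ge\delta$ rules out $m=1$. After passing to the subsequence one may arrange $\|S^{m-1}x_i\|\ge\delta'>0$ uniformly while $\|S^m x_i\|$ decays at any prescribed rate. For each target $2j$ with $2j<\min\ran x_2$ and each $\varepsilon>0$, a length-$n_{2j}$ skipped-block segment of $(x_i)$ then delivers, via Lemma \ref{WeakExactPairs} (hypothesis (i) if $m=k$, hypothesis (ii) if $2\le m\le k-1$), a weak $(16C,2j,0,\varepsilon)$-exact pair $(Sw,\eta)$ in which $w=m_{2j}n_{2j}^{-1}\sum_k x_k\in Z$ has $\|w\|\ge\tfrac12$ by Lemma \ref{AHProp4.8}.

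The main obstacle is chaining these pairs into an honest weak $(16C,2j_0-1,0)$-dependent sequence. At step $i+1$ the weight of $\eta_{i+1}$ is forced to equal $m_{4\sigma(\xi_i)}^{-1}$, and the coding element $\xi_{i+1}\in\Delta_{p_{i+1}}$ of weight $m_{2j_0-1}^{-1}$, carrying $(p_{i+1},e^*_{\eta_{i+1}},\xi_i)$ as the last step of its analysis, must actually belong to $\Gamma$. Admissibility holds because $\sigma(\xi_i)\in\Sigma(\xi_i)$, so the odd-weight restriction in Definition \ref{DefnOfGammaAndSpace} is satisfied as soon as $m_{4\sigma(\xi_i)}^{-1}<n_{2j_0-1}^{-2}$; this follows since $\sigma(\xi_i)>\rank\xi_i$ can be made arbitrarily large by choosing the RIS segment at stage $i$ sufficiently far along, and the growth Assumption \ref{mnAssump} closes the gap. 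The segment length at stage $i+1$ is fixed to $n_{4\sigma(\xi_i)}$; since $S$ preserves RIS (Remark \ref{SofRISisRIS}) and the gap condition $2j<\min\ran x_2$ of Lemma \ref{WeakExactPairs} can be maintained by advancing deep into $Z$ at each step, the inductive construction goes through. With the resulting weak dependent sequence in hand, the two estimates displayed above deliver the contradiction.
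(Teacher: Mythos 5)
Your overall architecture is the paper's: assume $S$ is bounded below on a block subspace, extract a skipped-block RIS, split into the cases $S^{k-1}x_i\not\to 0$ versus the first $m$ with $S^m x_i\to 0$, use Lemma \ref{WeakExactPairs} to manufacture weak exact pairs $(Sw,\eta)$, chain them into a weak $(\,\cdot\,,2j_0-1,0)$-dependent sequence of images, and play the dependent-sequence upper estimate against a lower estimate for the preimages via $\|S(\cdot)\|\ge\delta\|\cdot\|$. The admissibility discussion for the coding elements $\xi_i$ is also fine. But the lower estimate, which is the heart of the contradiction, does not work as you state it. You claim $\|w\|=\|n_{2j_0-1}^{-1}\sum_i w_i\|\ge\tfrac14 m_{2j_0-1}^{-1}$ ``by Lemma \ref{AHProp4.8}''. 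That lemma only produces elements of \emph{even} weight $m_{2j}^{-1}$ and requires the number of summands to satisfy $a\le n_{2j}$; with $a=n_{2j_0-1}$ terms this forces $2j\ge 2j_0$, so the best it gives is $\|w\|\ge\tfrac14 m_{2j_0}^{-1}$. Since $m_{2j_0}\ge m_{2j_0-1}^2$, this lower bound is of order $m_{2j_0-1}^{-2}$, i.e.\ the same order as the upper estimate $1120\,C\,m_{2j_0-1}^{-2}$ from Proposition \ref{0DepSeqUpperEst}, and no choice of $j_0$ produces a contradiction. (Generic block lower estimates fare even worse, since $n_{2j_0-1}\gg m_{2j_0-1}^2$.)

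What is missing is the one piece of structure that makes the construction work, and which Lemma \ref{WeakExactPairs} supplies but you never use: the \emph{preimage} satisfies $w(\eta)\ge\tfrac{\delta'}{2}$ even though $S^lw(\eta)$ is (essentially) zero. In the paper's proof one tests the averaged preimages against the odd-weight element $\xi_{n_{2j_0-1}}$ itself: by the evaluation analysis (Proposition \ref{EvalAnal}) and the range conditions $\ran w_i\subseteq(p_{i-1},p_i)$,
\[
\Bigl\langle e^*_{\xi_{n_{2j_0-1}}},\sum_i w_i\Bigr\rangle=m_{2j_0-1}^{-1}\sum_i w_i(\eta_i)\ \ge\ m_{2j_0-1}^{-1}\,n_{2j_0-1}\,\tfrac{\nu}{2},
\]
so $\|n_{2j_0-1}^{-1}\sum_i w_i\|\ge \tfrac{\nu}{2}m_{2j_0-1}^{-1}$, giving $\|n_{2j_0-1}^{-1}\sum_i Sw_i\|\ge\tfrac{\delta\nu}{2}m_{2j_0-1}^{-1}$, which beats the upper estimate $O(m_{2j_0-1}^{-2})$ once $m_{2j_0-1}$ is chosen large. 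To repair your argument you must therefore (a) record, at each application of Lemma \ref{WeakExactPairs}, the value $w_i(\eta_i)\ge\nu/2$ (not merely $\|w_i\|\ge\tfrac12$), and (b) replace the appeal to Lemma \ref{AHProp4.8} in the final step by the evaluation of $\sum_i w_i$ at $\xi_{n_{2j_0-1}}$ as above.
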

\begin{proof}
We suppose by contradiction that $S$ is not strictly singular. It follows that there is some infinite dimensional block subspace $Y$ of $\X$ on which $S$ is an isomorphism, i.e. there is some $0<\delta \leq 1$ such that whenever $y \in Y, \|Sy\| \geq \delta \|y\|$. By lemma \ref{ExistRIS}, $Y$ contains a normalised skipped block $3$-RIS, $(x_i)_{i\in \N} \subseteq Y$. We note that certainly $Sx_i \arrownot\to 0$ and consider two possibilities. Either $S^{k-1} x_i \to 0$ or it does not. In the latter of these possibilities, passing to a subsequence, we can assume without loss of generality that $\|S^{k-1}x_i\| \geq \nu > 0$ for every $i$ (and some $\nu$). Thus, we see by lemma \ref{WeakExactPairs} that we can construct $(C, 2j, 0)$ special exact pairs $(Sx, \eta)$ for any $j \in \N$, with $\text{min }\ran Sx$ arbitrarily large and $x(\eta) \geq \frac{\nu}{2}$.

Otherwise, we must have $k > 2$ and there is an $m \in \{ 2, \dots, (k-1) \}$ with $S^{m-1}x_i \arrownot\to 0$ but $S^m x_i \to 0$. By passing to a subsequence, we can assume that $\| S^{m-1}x_i \| \geq \nu$ for all $i$. Moreover, for a fixed $j_0 \in \N$, since $S^mx_i \to 0$, given any $j \in \N$, we can find an $N_j \in \N$ such that $\| S^mx_i \| \leq Cm_{2j}^{-1} n_{2j_0-1}^{-1}$ for every $i \geq N_j$. So by lemma \ref{WeakExactPairs}, we can construct weak $(C, 2j,0,  n_{2j_0 - 1}^{-1})$ exact pairs $(Sx, \eta)$ for any $j \in \N$, with $\text{min }\ran Sx$ arbitrarily large and $x(\eta) \geq \frac{\nu}{2}$.

Now, we choose $j_0, j_1$ with $m_{2j_0-1} > 6720\delta^{-1}\nu^{-1}$ and $m_{4j_1} > n_{2j_0-1}^{2}$. By lemma \ref{WeakExactPairs}, and the argument above, there is a $y_1 \in Y, \eta_1 \in \Gamma$ such that $(Sy_1, \eta_1)$ is a $(48, 4j_1, 0,  n_{2j_0-1}^{-1})$-weak exact pair and $y_1(\eta_1) \ge \frac{\nu}{2}$. We let $p_1 > \rank \eta_1 \vee \text{max }\ran y_1$ and define $\xi_1 \in \Delta_{p_1}$ to be $(p_1, 0, m_{2j_0-1}, e_{\eta_1}^*)$.

Now set $j_2 = \sigma (\xi_1)$. Again by lemma \ref{WeakExactPairs} and the argument above, there is $y_2 \in Y, \eta_2 \in \Gamma$ with $\text{min }\ran y_2 > p_1, y_2(\eta_2) \ge \frac{\nu}{2}$ and $(Sy_2, \eta_2)$ a $(48, 4j_2, 0, n_{2j_0-1}^{-1})$-weak exact pair. We pick $p_2 > \rank \eta_2 \vee \text{max } \ran y_2$ and take $\xi_2$ to be the element $(p_2, \xi_1, m_{2j_0-1}, e_{\eta_2}^*)$, noting that this tuple is indeed in $\Delta_{p_2}$. 

Continuing in this way, we obtain a $(48, 2j_0-1, 0)$-weak dependent sequence $(Sy_i)$.  By proposition \ref{0DepSeqUpperEst} we see that \[
\| m_{2j_0-1}n_{2j_0-1}^{-1} \sum_{i=1}^{n_{2j_0-1}} Sy_i \| \le 70\times48 m_{2j_0-1}^{-1} < \frac{\delta\nu}{2} \]
with the final inequality following by the choice of $j_0$. On the other hand, \[
\sum_{i=1}^{n_{2j_0-1}} y_i (\xi_{n_{2j_0-1}}) = m_{2j_0-1}^{-1} \sum_{i=1}^{n_{2j_0-1}} y_i(\eta_i) \geq m_{2j_0-1}^{-1}n_{2j_0-1} \frac{\nu}{2}
\]
So,
\begin{align*}
\| m_{2j_0-1}n_{2j_0-1}^{-1} \sum_{i=1}^{n_{2j_0-1}} Sy_i \| &\geq \delta \|  m_{2j_0-1}n_{2j_0-1}^{-1} \sum_{i=1}^{n_{2j_0-1}} y_i \| \\
&\geq \delta m_{2j_0-1}n_{2j_0-1}^{-1} \sum_{i=1}^{n_{2j_0-1}} y_i (\xi_{n_{2j_0-1}}) \geq \frac{\delta\nu}{2}
\end{align*}
This contradiction completes the proof.
\end{proof}
We immediately obtain
\begin{cor}
The operators $S^j \colon \X_k \to \X_k$ ($j \geq 1$) are strictly singular.
\end{cor}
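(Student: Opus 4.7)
The plan is to reduce immediately to the previous theorem by invoking the standard fact that the class $\mathcal{SS}(\X_k)$ of strictly singular operators is a two-sided operator ideal inside $\mathcal{L}(\X_k)$; that is, $AT$ and $TA$ are strictly singular whenever $A \in \mathcal{L}(\X_k)$ and $T \in \mathcal{SS}(\X_k)$. Given this, and given that Theorem \ref{SisStrictlySingular} already established that $S$ itself is strictly singular, I would simply write $S^j = S \circ S^{j-1}$ and note that, as the composition of the strictly singular operator $S$ with the bounded operator $S^{j-1}$, the power $S^j$ lies in $\mathcal{SS}(\X_k)$ for each $j \geq 1$. This is a one-line argument once the ideal property is quoted.

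If the authors prefer not to appeal to the ideal property as a black box, I would give the equivalent direct contradiction argument. Suppose $S^j$ fails to be strictly singular; then there is an infinite-dimensional closed subspace $Y \subseteq \X_k$ and a constant $c > 0$ with $\|S^j y\| \geq c\|y\|$ for all $y \in Y$. In particular $S^{j-1}|_Y$ is bounded below (by $c/\|S\|$ at worst), hence injective, so $Z := S^{j-1}(Y)$ is an infinite-dimensional subspace of $\X_k$. For any $z = S^{j-1}y \in Z$ we have
\[
\|Sz\| = \|S^j y\| \geq c\|y\| \geq \frac{c}{\|S^{j-1}\|}\|z\|,
\]
so $S$ is bounded below on the infinite-dimensional subspace $Z$, contradicting Theorem \ref{SisStrictlySingular}.

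There is essentially no obstacle here: the corollary is a formal consequence of the already-proved strict singularity of $S$, and both routes (the ideal-theoretic one-liner and the direct bounded-below argument) are elementary. I would opt for the one-line ideal argument in the final write-up, since the ideal property of $\mathcal{SS}$ is standard and keeps the exposition clean.
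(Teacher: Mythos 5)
Your argument is correct and is exactly what the paper intends: the corollary is stated with ``We immediately obtain'' precisely because $S^j = S\circ S^{j-1}$ is strictly singular by the ideal property of $\mathcal{S}\mathcal{S}(\X_k)$ once Theorem \ref{SisStrictlySingular} is in hand. Your direct bounded-below argument is a fine elementary substitute, but the one-line ideal argument matches the paper's (implicit) proof.
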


\section{The HI Property}
It only remains to see that the spaces $\X_k$ are hereditarily indecomposable. The proof is sufficiently close to the corresponding proof of \cite{AH} that we will omit most of the details. We first observe we have the following generalisations of lemmas 8.8 and 8.9 of \cite{AH}.
\begin{lem} \label{1DepSeqLem}
Let $(x_i)_{i\leq n_{2j_0-1}}$ be a $(C, 2j_0-1, 1)-$weak dependent sequence in $\X_k$ and let $J$ be a sub-interval of $[1, n_{2j_0-1}]$. For any $\gamma' \in \Gamma$ of weight $m_{2j_0-1}$ we have \[
\left| \sum_{i\in J} (-1)^{i}x_i(\gamma') \right| \leq 7C
\]
It follows that \[
\| n_{2j_0-1}^{-1} \sum_{i=1}^{n_{2j_0-1}} x_i \| \geq m_{2j_0-1}^{-1} \quad \text{ but } \quad \| n_{2j_0-1}^{-1}\sum_{i=1}^{n_{2j_0-1}} (-1)^i x_i \| \leq 70C m_{2j_0-1}^{-2}
\]
\end{lem}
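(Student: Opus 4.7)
The plan is to parallel the proof of Lemma \ref{ZeroDepSeqLem} closely, with one new feature: the ``diagonal'' values $x_i(\eta_i)=1$ no longer vanish (as they essentially did in the $\delta=0$ case) and must be controlled via the alternating signs $(-1)^i$. I would first establish the combinatorial estimate $|\sum_{i\in J}(-1)^i x_i(\gamma')|\leq 7C$; the two norm bounds then follow.

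The lower bound $\|n_{2j_0-1}^{-1}\sum x_i\|\geq m_{2j_0-1}^{-1}$ is obtained by direct evaluation at $\gamma:=\xi_{n_{2j_0-1}}$. Its evaluation analysis $(p_i,e^*_{\eta_i},\xi_i)$ together with $\ran x_k\subseteq(p_{k-1},p_k)$ implies that the $d^*_{\xi_r}$-contributions vanish on $x_k$, and among the error terms $m_{2j_0-1}^{-1}P^*_{(p_{r-1},p_r)}e^*_{\eta_r}$ only $r=k$ contributes, giving $x_k(\gamma)=m_{2j_0-1}^{-1}x_k(\eta_k)=m_{2j_0-1}^{-1}$. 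Summing and using $\|e^*_\gamma\|_{\X_k^*}\leq 1$ yields $\|n_{2j_0-1}^{-1}\sum x_i\|\geq m_{2j_0-1}^{-1}$.

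For the key combinatorial estimate, let $\gamma'$ have analysis $(p_0', (p'_r, b'^*_r, \xi'_r)_{r=1}^{a'})$. If none of the $b'^*_r=e^*_{\eta'_r}$ has $\weight\eta'_r$ matching any $\weight\eta_i$, then the diagonal values $x_i(\eta_i)$ never enter the evaluation of $x_i(\gamma')$, and the bound proceeds verbatim as in the $\delta=0$ case. Otherwise, pick $l$ maximal with $\weight\eta_l=\weight\eta'_r$ for some $r$ and decompose the sum into $i<l$, $i=l$, $i>l$. The $i>l$ contribution (at most $3C$, via the weak exact pair estimates and lemma \ref{monotonicity_of_odd_weights}) and the single $i=l$ term (at most $\|x_l\|\leq C$) carry over verbatim from the proof of Lemma \ref{ZeroDepSeqLem}, since these estimates never invoke $x_k(\eta_k)$ itself; the alternating signs play no role here.

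The only genuinely new work is bounding $\sum_{k\in J,\,k<l}(-1)^k x_k(\gamma')$. In the sub-case where the matching index equals $1$ (so $b'^*_1=e^*_{\eta'_1}$ with $\weight\eta'_1=\weight\eta_l$), the argument is identical to the $\delta=0$ case, because it uses only conditions (1), (2) and (5) of the (weak) exact pair together with the observation (via lemma \ref{monotonicity_of_odd_weights}) that $\weight\eta'_r\neq\weight\eta_k$ for $k<l$. In the sub-case where the matching index satisfies $i>1$, lemma \ref{IntersectionPropertyofSigmaSets} gives either $\xi_{l-1}=\xi'_{i-1}$ or $F^j(\xi_{l-1})=\xi'_{i-1}$ for some $1\leq j\leq k-1$. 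In either case, writing out the evaluation analysis of $\gamma'$ via proposition \ref{EvalAnal} and noting $\ran x_k\subseteq(0,p_{l-1})=(0,p'_{i-1})$ reduces the contribution of each $x_k$ to
\[
\langle x_k, e^*_{\gamma'}\rangle \;=\; m_{2j_0-1}^{-1}\, S^j x_k(\eta_k).
\]
When $j\geq 1$, condition $(4'')$ gives $|S^j x_k(\eta_k)|\leq Cn_{2j_0-1}^{-1}$, so summing over at most $n_{2j_0-1}$ values of $k$ yields a bound of $Cm_{2j_0-1}^{-1}\leq C$, without needing the alternation. When $j=0$, however, $x_k(\eta_k)=1$ exactly, and
\[
\sum_{k\in J,\,k<l}(-1)^k x_k(\gamma') \;=\; m_{2j_0-1}^{-1}\sum_{k\in J,\,k<l}(-1)^k;
\]
the alternating sum of $\pm 1$'s over a subinterval has modulus at most $1$, so this contribution is at most $m_{2j_0-1}^{-1}\leq 1$. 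This is the essential use of the signs. Collecting the three parts of the decomposition yields $|\sum_{i\in J}(-1)^i x_i(\gamma')|\leq 7C$, and the upper norm estimate then follows from proposition \ref{AHProp5.6}(2) applied with $\lambda_i=(-1)^i$ and $C$ replaced by $7C$, exactly as in proposition \ref{0DepSeqUpperEst}. The main obstacle is bookkeeping: verifying carefully that the $i=1$ sub-case of the $k<l$ analysis genuinely does not require the $\delta=1$ hypothesis, and tracking the various constants so that they combine to the claimed $7C$.
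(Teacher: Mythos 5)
Your proposal is correct and follows exactly the route the paper intends: the paper's (omitted) proof is precisely the argument of Lemma \ref{ZeroDepSeqLem} adapted as in \cite{AH}, Lemmas 8.8--8.9, and you have correctly identified the one genuinely new point, namely that in the sub-case $\xi_{l-1}=\xi'_{i-1}$ the terms $x_k(\gamma')=m_{2j_0-1}^{-1}x_k(\eta_k)=m_{2j_0-1}^{-1}$ must be controlled by the alternation of signs, while all other estimates are termwise and unaffected by condition $4''$. The derivation of the lower bound by evaluating at $\xi_{n_{2j_0-1}}$ and of the upper bound via Proposition \ref{AHProp5.6}(2) with $\lambda_i=(-1)^i$ and $7C$ matches the paper's treatment.
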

\begin{proof}
The proof of the first claim is sufficiently close to the proof of lemma \ref{ZeroDepSeqLem} that we omit any more details. The second part of the lemma is proved in the same way as lemma 8.9 of \cite{AH}
\end{proof}

To see the spaces $\X_k$ are HI, we claim it will be enough to see that we have the following lemma
\begin{lem} \label{1WeakPair}
Let $Y$ be a block subspace of $\X_k$. There exists $\delta > 0$ such that whenever $j, p \in \N, \ve > 0$, there exists $q \in \N, x \in Y, \eta \in \Gamma$ with $\ran x \subseteq (p,q)$ and $(x, \eta)$ a $(96\delta^{-1}, 2j, 1, \ve)$ weak exact pair.
\end{lem}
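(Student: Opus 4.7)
The plan is to use the strict singularity of $S$ (Theorem~\ref{SisStrictlySingular}) to first pass to a block subspace $Z\subseteq Y$ on which $\|S|_Z\|$ is arbitrarily small, and then carry out the standard weighted-sum construction of an exact pair inside $Z$, rescaling so that $x(\eta)=1$. The new condition~(4$''$) — which has no counterpart in \cite{AH} — is what forces this preliminary reduction to $Z$; all the other estimates follow the Argyros–Haydon template.

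Fix once and for all a universal constant $\delta>0$ small enough that every estimate below is absorbed into $96\delta^{-1}$ (any $\delta\le\tfrac12$ that additionally satisfies $\delta\le 24/M$, where $M$ is the basis constant of $\X_k$, will do). Given $j,p\in\N$ and $\ve>0$, use Theorem~\ref{SisStrictlySingular} to choose an infinite-dimensional block subspace $Z\subseteq Y$ with $\|S|_Z\|<\ve_1$, where $\ve_1:=96\delta^{-1}\ve/60$. Since $\|S\|\le 1$, it follows that $\|S^l z\|\le \|S\|^{l-1}\|Sz\|\le \ve_1\|z\|$ for every $z\in Z$ and every $1\le l\le k-1$. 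By Lemma~\ref{ExistRIS} and a routine extraction inside $Z$ (taking a subsequence and discarding the first few terms), we obtain a normalised, skipped-block $3$-RIS $(x_i)_{i=1}^{n_{2j}}\subseteq Z$ satisfying $\min\ran x_1>p$ and $\min\ran x_2>2j$.

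Form the weighted sum $y:=m_{2j}n_{2j}^{-1}\sum_{i=1}^{n_{2j}}x_i\in Z$. Lemma~\ref{AHProp4.8}, applied to the normalised skipped-block sequence $(x_i)$, produces $\eta\in\Gamma$ of weight $m_{2j}^{-1}$ with $\sum_{i=1}^{n_{2j}}x_i(\eta)\ge\tfrac12 m_{2j}^{-1}n_{2j}$, equivalently $y(\eta)\ge\tfrac12>0$. Set $x:=y/y(\eta)\in Z$ and pick any $q$ with $\ran x=\ran y\subseteq(p,q)$; by construction $x(\eta)=1$.

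It remains to verify that $(x,\eta)$ is a $(96\delta^{-1},2j,1,\ve)$-weak exact pair in the sense of Definition~\ref{SpecialExactPair}. Conditions (1), (2), (3) and (5) are handled exactly as in \cite{AH}: Proposition~\ref{AHProp5.6}(1) gives $\|y\|\le 30$, hence $\|x\|\le 60$; since $(x_i)$ is a block sequence and at most one $d_\xi^* x_i$ is non-zero for each fixed $\xi$, Assumption~\ref{mnAssump} (which implies $n_{2j}\ge m_{2j}^2$) yields $|d_\xi^* x|\le 4Mm_{2j}^{-1}$; the weight of $\eta$ is $m_{2j}^{-1}$ by construction; and the bounds on $|x(\eta')|$ for $\weight \eta'=m_h^{-1}\ne m_{2j}^{-1}$ follow from the two cases of Proposition~\ref{AHProp5.6}(1) after scaling by $y(\eta)^{-1}\le 2$. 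All these bounds are dominated by $96\delta^{-1}m_{2j}^{-1}$ or $96\delta^{-1}m_h^{-1}$ as required, by our choice of $\delta$. The main obstacle is condition~(4$''$); here we exploit $x\in Z$ and the fact that $\X_k\subseteq\ell_\infty(\Gamma)$ inherits the sup norm to estimate, for each $1\le l\le k-1$,
\begin{equation*}
|S^l x(\eta)|\le \|S^l x\|_{\X_k}\le \|S\|^{l-1}\|Sx\|\le \ve_1\|x\|\le 60\ve_1 = 96\delta^{-1}\ve,
\end{equation*}
completing the proof.
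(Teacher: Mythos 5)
Your proposal is correct in substance, but it takes a genuinely different route from the paper's intended argument. The paper's proof (as indicated, Lemma \ref{WeakExactPairs} combined with the proof of Lemma 8.6 of \cite{AH}) obtains the smallness of $S^lx(\eta)$ structurally: one extracts a RIS $(x_i)$ in $Y$, chooses $m$ maximal with $\|S^{m-1}x_i\|\geq \delta$ (this $\delta$, depending on $Y$, is exactly the $\delta$ of the statement and the source of the factor $96\delta^{-1}$ after rescaling by $x(\eta)\geq\delta/2$), applies the lower estimate of Lemma \ref{AHProp4.8} to $(S^{m-1}x_i)$ and takes $\eta=F^{m-1}(\gamma)$, so that the bounds on $S^lx(\eta)$ come from $F^k$ being undefined (Lemma \ref{G^kalwaysundefined}) or from $\|S^m x_i\|$ being small. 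You instead invoke the already-established Theorem \ref{SisStrictlySingular} to pass to a further subspace $Z\subseteq Y$ with $\|S|_Z\|$ as small as you wish, which makes condition (4$''$) automatic for \emph{any} $\eta$ produced by Lemma \ref{AHProp4.8}, with $\delta$ reduced to a universal bookkeeping constant; this is legitimate (there is no circularity: strict singularity is proved in Section 5 using only the $0$-pairs, independently of the HI property), and it is arguably simpler, at the price of two points you should make explicit. First, strict singularity gives, via the standard Kato-type characterization, an infinite-dimensional subspace of $Y$ on which $\|S\|$ is small, not immediately a \emph{block} subspace; to apply Lemma \ref{ExistRIS} (stated for block subspaces) you need the routine gliding-hump/perturbation argument producing a normalised block sequence $(z_n)\subseteq Y$ with $\|Sz_n\|$ summably small, whose closed span $Z$ then satisfies $\|S|_Z\|\lesssim M\ve_1$ — neither this characterization nor the perturbation step is stated in the paper, so it must be supplied. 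Second, your constant-chasing for conditions (1), (2), (5) is fine (using $n_{2j}\geq m_{2j}^2$ and $m_h\geq m_{2j}^2$ for $h>2j$), but note the rescaling $x=y/y(\eta)$ doubles all the \cite{AH} estimates, which is precisely why the universal $\delta$ must be taken small enough that $96\delta^{-1}$ dominates them, as you indicate. With these points filled in, your argument proves the lemma (indeed with a $\delta$ independent of $Y$), whereas the paper's route avoids re-using strict singularity and exhibits the dependence of $\delta$ on the behaviour of the powers of $S$ on $Y$.
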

We omit the proof of lemma \ref{1WeakPair}. It is essentially the same as lemma \ref{WeakExactPairs} combined with the proof of \cite{AH}, lemma 8.6. 

\begin{prop}
$\X_k$ is hereditarily indecomposable.
\end{prop}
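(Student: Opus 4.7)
The plan is to use the standard block-subspace characterisation of HI: since $\mathfrak X_k$ has a Schauder basis, it suffices to show that for any two block subspaces $Y,Z$ of $\mathfrak X_k$ and any $\eta>0$ there exist $y\in Y$ and $z\in Z$ with $\|y-z\|\le \eta\|y+z\|$. The two ingredients in the excerpt are tailored precisely to this: Lemma~\ref{1WeakPair} supplies $(C,2j,1,\varepsilon)$-weak exact pairs in any block subspace, and Lemma~\ref{1DepSeqLem} provides the lower estimate $\|n_{2j_0-1}^{-1}\sum x_i\|\ge m_{2j_0-1}^{-1}$ together with the alternating upper estimate $\|n_{2j_0-1}^{-1}\sum(-1)^i x_i\|\le 70C m_{2j_0-1}^{-2}$ for any weak $(C,2j_0-1,1)$-dependent sequence.

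Given $Y$ and $Z$, I would first apply Lemma~\ref{1WeakPair} to each of $Y$ and $Z$ separately to obtain constants $\delta_Y,\delta_Z>0$, and set $\delta=\min\{\delta_Y,\delta_Z\}$, $C=96\delta^{-1}$. Then choose $j_0\in\mathbb N$ so large that $70C m_{2j_0-1}^{-1}<\eta$, and set $\varepsilon=n_{2j_0-1}^{-1}$. Next I would build a $(C,2j_0-1,1)$-weak dependent sequence $(x_i)_{i\le n_{2j_0-1}}$ as in Definition~\ref{DepSeq}, arranging that $x_i\in Y$ for $i$ odd and $x_i\in Z$ for $i$ even (and keeping their ranges in disjoint intervals $(p_{i-1},p_i)$). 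This is done recursively: pick $j_1$ with $m_{4j_1}^{-1}<n_{2j_0-1}^{-2}$ and use Lemma~\ref{1WeakPair} (inside $Y$) to produce $(x_1,\eta_1)$, a $(C,4j_1,1,\varepsilon)$-weak exact pair with $\operatorname{ran} x_1\subseteq(p_0,p_1)$; define $\xi_1=(p_1,0,m_{2j_0-1}^{-1},e_{\eta_1}^*)\in\Delta_{p_1}$. Inductively, having produced $\xi_{i-1}$, set $j_i=\sigma(\xi_{i-1})$, invoke Lemma~\ref{1WeakPair} in $Z$ (respectively $Y$) to get $(x_i,\eta_i)$ a $(C,4j_i,1,\varepsilon)$-weak exact pair with $\operatorname{ran} x_i\subseteq(p_{i-1},p_i)$, and define $\xi_i=(p_i,\xi_{i-1},m_{2j_0-1}^{-1},e_{\eta_i}^*)$. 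The odd-weight admissibility of $\xi_i$ in $\Gamma$ follows from $\sigma(\xi_{i-1})\in\Sigma(\xi_{i-1})$ and the choice of $j_1$ so that $m_{4j_1}^{-1}<n_{2j_0-1}^{-2}$, together with the inequality $m_{4\sigma(\xi_i)}^{-1}<n_{2j_0-1}^{-2}$ which follows from Lemma~\ref{monotonicity_of_odd_weights} and the standard growth of $(m_j),(n_j)$.

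Once the weak dependent sequence is in hand, set $y=\sum_{i\text{ odd}}x_i\in Y$ and $z=\sum_{i\text{ even}}x_i\in Z$. Then $y+z=\sum_i x_i$ and $y-z=-\sum_i(-1)^i x_i$, so Lemma~\ref{1DepSeqLem} gives
\[
\|y+z\|\ge n_{2j_0-1}\cdot m_{2j_0-1}^{-1},\qquad \|y-z\|\le 70C\, m_{2j_0-1}^{-2}\cdot n_{2j_0-1},
\]
hence $\|y-z\|/\|y+z\|\le 70C m_{2j_0-1}^{-1}<\eta$, as required. Finally, the reduction from arbitrary infinite-dimensional closed subspaces to block subspaces is the standard small-perturbation argument for spaces with a Schauder basis (any infinite-dimensional closed subspace contains, up to arbitrarily small perturbation, a normalised block sequence).

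The main obstacle is bookkeeping rather than conceptual: one must check carefully that the recursively constructed $\xi_i$ lie in $\Gamma$ (i.e.\ satisfy the odd-weight restrictions from Definition~\ref{DefnOfGammaAndSpace}), which is exactly where the coding function $\sigma$ and Lemma~\ref{monotonicity_of_odd_weights} come in, and that the $x_i$ can be produced with $\operatorname{ran} x_i$ pushed beyond the previous $p_{i-1}$ (this is the content of Lemma~\ref{1WeakPair}, which allows $p$ to be prescribed). Everything else is a direct application of the two quoted lemmas.
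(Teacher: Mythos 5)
Your proposal is correct and follows essentially the same route as the paper: alternately apply Lemma~\ref{1WeakPair} in $Y$ and $Z$ to build a weak $(C,2j_0-1,1)$-dependent sequence with the $\xi_i$ admissible by the odd-weight conditions, then combine the lower and alternating-sign upper estimates of Lemma~\ref{1DepSeqLem} to make $\|y-z\|/\|y+z\|$ small. The paper merely compresses these details by citing the corresponding argument in \cite{AH}, so your write-up is just a fleshed-out version of the intended proof.
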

\begin{proof}
By lemma \ref{1WeakPair}, given two block subspaces $Y$ and $Z$ of $\X_k$ there exists some $\delta > 0$ such that for all $j_0 \in \N$, we can construct $(96\delta^{-1}, n_{2j_0-1}, 1)-$weak dependent sequences, $(x_i)_{i \leq n_{2j_0-1}}$ with $x_i \in Y$ when $i$ is odd and $x_i \in Z$ when $i$ is even. Using lemma \ref{1DepSeqLem} and the same argument as in \cite{AH}, we conclude that $\X_k$ is HI as required. 
\end{proof}

\section{Concluding Remarks}

\subsection{$\mathcal{L}(\X_k)$ as a Banach algebra}

The structure of norm closed ideals in the algebra $\mathcal{L}(X)$ of all bounded linear operators on an infinite dimensional Banach space $X$ is generally not understood. It is known that for the $\ell_p$ spaces, $1 \leq p < \infty$, and $c_0$, there is only one non-trivial closed ideal in $\mathcal{L}(X)$, namely the ideal of compact operators. This was proved by Calkin, \cite{Calkin}, for $\ell_2$ and then extended to $\ell_p$ and $c_0$ by Gohberg et al., \cite{Gohberg}. More recently, the complete structure of closed ideals in $\mathcal{L}(X)$ was described in \cite{LLR04} for $X = (\oplus_{n=1}^{\infty} \ell_2^n )_{c_0}$ and in \cite{LSZ06} for $X= (\oplus_{n=1}^{\infty} \ell_2^n )_{\ell_1}$.  In both cases, there are exactly two nested proper closed  ideals. Until the space constructed by Argyros and Haydon, \cite{AH}, these were the only known separable, infinite dimensional Banach spaces for which the ideal structure of the operator algebra is completely known. 

Clearly the space $\XK$ of Argyros and Haydon provides another example of a separable Banach space for which the compact operators is the only (proper) closed ideal in the operator algebra. The spaces constructed in this paper allow us to add to the list of spaces for which the ideal structure of $\mathcal{L}(X)$ is completely known. In fact, we see that we can construct Banach spaces for which the ideals of the operator algebra form a finite, totally ordered lattice of arbitrary length. More precisely, 

\begin{lem}
There are exactly $k$ norm closed, proper ideals in $\mathcal{L}(\X_k)$ The lattice of closed ideals is given by \[
\mathcal{K}(\X_k)  \subsetneq \langle S^{k-1} \rangle \subsetneq \langle S^{k-2} \rangle \dots \langle S \rangle \subsetneq \mathcal{L}(X_k)
\]
Here, if $T$ is an operator on $\X_k$, $\langle T \rangle$ is the norm closed ideal in $\mathcal{L}(\X_k)$ generated by T.
\end{lem}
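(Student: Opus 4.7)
The plan is to exploit the ring isomorphism $\psi\colon \mathcal{L}(\X_k)/\mathcal{K}(\X_k)\to\R[X]/\langle x^k\rangle$ established in the introduction, and reduce the classification of closed ideals of $\mathcal{L}(\X_k)$ to the easy classification of ideals in the local Artinian ring $\R[X]/\langle x^k\rangle$. Denote by $\pi\colon\mathcal{L}(\X_k)\to\mathcal{L}(\X_k)/\mathcal{K}(\X_k)$ the quotient map. The standard correspondence theorem says that closed two-sided ideals of $\mathcal{L}(\X_k)$ containing $\mathcal{K}(\X_k)$ are in order-preserving bijection with closed ideals of the Calkin algebra via $I\mapsto \pi(I)$. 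Since the Calkin algebra is $k$-dimensional, every linear subspace (hence every ideal) is automatically closed, so we need only classify the algebraic ideals of $\R[X]/\langle x^k\rangle$, which are precisely $\langle \bar x^{\,j}\rangle$ for $0\le j\le k$.

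The first step is to show that every nonzero closed two-sided ideal $I$ of $\mathcal{L}(\X_k)$ contains $\mathcal{K}(\X_k)$, so that the correspondence above applies to all nonzero closed ideals. For this I use the standard fact, valid on any Banach space: pick a nonzero $T\in I$, choose $x_0\in\X_k$ with $Tx_0\ne 0$ and $\phi\in\X_k^*$ with $\phi(Tx_0)\ne 0$; then for arbitrary $y\in\X_k$ and $\psi\in\X_k^*$, setting $S(x)=\psi(x)x_0$ and $R(z)=\phi(Tx_0)^{-1}\phi(z)y$ we obtain $RTS(x)=\psi(x)y$, so the rank-one operator $\psi\otimes y$ lies in $I$. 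Taking finite sums gives all finite-rank operators in $I$, and taking the closure gives $\mathcal{K}(\X_k)\subseteq I$.

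Next I identify the closed ideal $\langle S^j\rangle\subseteq \mathcal{L}(\X_k)$ (the closed two-sided ideal generated by $S^j$) with the preimage $\pi^{-1}(\langle\bar S^{\,j}\rangle)$ for each $1\le j\le k-1$. The inclusion $\pi(\langle S^j\rangle)\subseteq\langle \bar S^{\,j}\rangle$ is a consequence of continuity of $\pi$ (any finite sum $\sum A_i S^j B_i$ maps into $\langle \bar S^{\,j}\rangle$, which, being finite-dimensional, is closed), while $\supseteq$ follows by lifting $\bar A\bar S^{\,j}\bar B$ to $AS^jB\in\langle S^j\rangle$. Hence $\pi^{-1}(\langle\bar S^{\,j}\rangle)=\langle S^j\rangle+\mathcal{K}(\X_k)=\langle S^j\rangle$, using that $\mathcal{K}(\X_k)\subseteq\langle S^j\rangle$ by step one. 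The preimages of $\langle\bar x^{\,0}\rangle$ and $\langle\bar x^{\,k}\rangle$ are $\mathcal{L}(\X_k)$ and $\mathcal{K}(\X_k)$ respectively.

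Putting everything together, the closed ideals of $\mathcal{L}(\X_k)$ strictly between $\{0\}$ and $\mathcal{L}(\X_k)$ are in bijection with the ideals $\langle\bar x^{\,j}\rangle$, $1\le j\le k$, giving the complete list $\mathcal{K}(\X_k)\subsetneq\langle S^{k-1}\rangle\subsetneq\cdots\subsetneq\langle S\rangle$, and the strict inclusions transfer from the strict chain $\langle\bar x^{\,k}\rangle\subsetneq\langle\bar x^{\,k-1}\rangle\subsetneq\cdots\subsetneq\langle\bar x\rangle$ in $\R[X]/\langle x^k\rangle$. The only delicate point is verifying that $\pi(\langle S^j\rangle)$ equals $\langle\bar S^{\,j}\rangle$ rather than something larger; this is where the finite-dimensionality of the Calkin algebra is essential, as it makes all algebraic ideals closed and forces $\pi$ to commute with the closure operation on the ideals of interest. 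The remainder of the argument is formal manipulation of the ideal correspondence.
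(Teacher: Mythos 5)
Your proposal is correct, and it is essentially the argument the paper intends: the lemma is stated in the concluding remarks without proof, and the ingredients you use are exactly the ones the paper sets up (the Calkin algebra is $k$-dimensional with basis the classes of $I,S,\dots,S^{k-1}$ and is isomorphic as an algebra to $\R[X]/\langle x^k\rangle$; the strictness of the inclusions is precisely the paper's statement that no $S^j$ is a compact perturbation of a linear combination of the other powers). The reduction via the ideal correspondence, the observation that all ideals of the finite-dimensional Calkin algebra are closed, the identification $\pi^{-1}(\langle\bar S^{\,j}\rangle)=\langle S^j\rangle$, and the classification of ideals of $\R[X]/\langle x^k\rangle$ as $\langle\bar x^{\,j}\rangle$, $0\le j\le k$, are all sound.

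One point deserves to be made explicit. Your first step shows that a nonzero closed two-sided ideal contains all finite-rank operators and hence their closure; but in a general Banach space that closure is the ideal of approximable operators, which coincides with $\mathcal{K}(\X_k)$ only in the presence of the approximation property. Here this is harmless: $\X_k$ is a separable $\mathscr L_{\infty,M}$-space and, more concretely, carries the finite-dimensional decomposition $(M_n)$ from Theorem \ref{BDThm}, so it has the bounded approximation property and every compact operator is a norm limit of finite-rank ones. You should record this, since otherwise the minimality of $\mathcal{K}(\X_k)$ among nonzero closed ideals — which is what makes the correspondence with the Calkin algebra capture \emph{all} nonzero closed ideals — is not justified. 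A second, purely cosmetic remark: the zero ideal is of course also a norm-closed proper ideal, so the count of ``exactly $k$'' in the statement should be read as referring to nontrivial proper closed ideals, which is exactly what your argument classifies.
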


\end{document}